\documentclass{article}
\usepackage[left=3cm,right=3cm,top=2.5cm,bottom=2.5cm]{geometry} 

\usepackage{amsfonts}
\usepackage{graphicx}
\usepackage{epstopdf}
\usepackage{algorithmic}
\ifpdf
  \DeclareGraphicsExtensions{.eps,.pdf,.png,.jpg}
\else
  \DeclareGraphicsExtensions{.eps}
\fi


\usepackage{fancyhdr}
\pagestyle{fancy}
\lhead{A. Langer}
\rhead{The Ill-Posed Foundations of PINNs and FD-PINNs}

\title{The Ill-Posed Foundations of Physics-Informed Neural Networks and Their Finite-Difference Variants}

\author{Andreas Langer\thanks{Center for Mathematical Sciences, Lund University, Box 118, 221 00 Lund, Sweden
  (\email{andreas.langer@math.lth.se}, \url{https://portal.research.lu.se/en/persons/andreas-langer/}).}
}

\usepackage{amsopn}

\usepackage{hyperref}
\ifpdf
\hypersetup{
  pdftitle={The Ill-Posed Foundations of PINNs and FD-PINNs},
  pdfauthor={A. Langer}
}
\fi

\usepackage{amsmath}
\usepackage{amsthm}
\usepackage[
    capitalise,
    noabbrev,
  ]{cleveref}
\usepackage[shortlabels, inline]{enumitem}
\usepackage{tikz}
\usetikzlibrary{arrows}
\usepackage[caption=false]{subfig}  
\usepackage{todonotes}

\newtheorem{theorem}{Theorem}[section]
\newtheorem{example}[theorem]{Example}
\newtheorem{proposition}[theorem]{Proposition}
\newtheorem{remark}[theorem]{Remark}
\newtheorem{corollary}[theorem]{Corollary}

\newtheorem{lemma}[theorem]{Lemma}

\newcommand{\N}{\mathbb{N}}
\newcommand{\Z}{\mathbb{Z}}
\newcommand{\R}{\mathbb{R}}

\newcommand{\mcG}{\mathcal{G}}

\newcommand{\mcD}{\mathcal{D}}
\newcommand{\mcF}{\mathcal{F}}

\newcommand{\mcH}{\mathcal{H}}

\newcommand{\mcJ}{\mathcal{J}}
\newcommand{\mcB}{\mathcal{B}}
\newcommand{\mcA}{\mathcal{A}}

\newcommand{\mcU}{\mathcal{U}}
\newcommand{\mcV}{\mathcal{V}}

\DeclareMathOperator*{\argmin}{arg\,min}
\newcommand{\im}{\mathbf{i}}
\def\FD{\mathrm{FD}}
\newcommand{\nnew}[1]{{#1}}
\newcommand{\email}[1]{\protect\href{mailto:#1}{#1}}

\date{}
\begin{document}

\maketitle

\begin{abstract}
Physics-informed neural networks based on automatic differentiation (AD-PINNs) and their finite-difference counterparts (FD-PINNs) are widely used for solving partial differential equations (PDEs), yet their analytical properties remain poorly understood.
This work provides a unified mathematical foundation for both formulations.
Under mild regularity assumptions on the activation function and for sufficiently wide neural networks of depth at least two, we prove that both the AD- and FD-PINN optimization problems are ill-posed: whenever a minimizer exists, there are in fact infinitely many, and uniqueness fails regardless of the choice of collocation points or finite-difference stencil.
Nevertheless, we establish two structural properties. First, whenever the underlying PDE or its finite-difference discretization admits a solution, the corresponding AD-PINN or FD-PINN loss also admits a minimizer, realizable by a neural network of finite width.
Second, FD-PINNs are tightly coupled to the underlying finite-difference scheme: every FD-PINN minimizer agrees with a finite-difference minimizer on the grid, and in regimes where the discrete PDE solution is unique, all zero-loss FD-PINN minimizers coincide with the discrete PDE solution on the stencil.
Numerical experiments illustrate these theoretical insights: FD-PINNs remain stable in representative forward and inverse problems, including settings where AD-PINNs may fail to converge. We also include an inverse problem with noisy data, demonstrating that FD-PINNs retain robustness in this setting as well.
Taken together, our results clarify the analytical limitations of AD-PINNs and explain the structural reasons for the more stable behavior observed in FD-PINNs.
\end{abstract}

\section{Introduction}

Physics-informed neural networks (PINNs) \cite{LaLiFo:98,RaPeKa:19} are a class of neural networks that incorporate physical laws, typically described by partial differential equations (PDEs), directly into the learning process. 
Unlike traditional machine learning models that rely solely on data, the PINN loss penalizes violations of these laws, allowing the neural network to learn solutions that satisfy the governing equations, even in data-scarce regimes.
This enables them to solve both forward and inverse problems and blends physics-based modeling with data-driven learning by leveraging the power of deep learning. 
In the classical formulation, commonly referred to as a PINN, the differential operators appearing in the PDE are evaluated using automatic differentiation (AD); for clarity, we will refer to this formulation as an AD-PINN.

Compared to classical methods, such as the Finite Element Method, Finite Difference Method, and Finite Volume Method, AD-PINNs offer advantages. They are inherently mesh-free, making them usable for problems involving complex geometries, and they mitigate the curse of dimensionality \cite{DeRyckMishra:24, JeSaWe:21}, allowing efficient handling of high-dimensional PDEs. Their ability to incorporate experimental data directly further enhances their applicability. This allows AD-PINNs to solve naturally inverse problems, e.g., parameter identification in PDEs, via optimization, eliminating the need for computationally expensive techniques such as adjoint methods.

Due to these advantageous, AD-PINNs have garnered significant attention due to their flexibility 
in addressing a wide range of problems involving PDEs. They have been introduced to 
various applications in computational science and engineering, including ﬂuid
mechanics \cite{jin2020nsfnets, raissi2019turbulent, raissi2019vortex, raissi2020hidden, sun2020surrogate}, bio-engineering \cite{costabal2020cardiac, kissas2020cardiovascular}, meta-material design \cite{chen2020nanooptics, fang2019metamaterial, liu2019multifidelity}, free boundary problems \cite{wang2020freeboundary}, Bayesian networks and
uncertainty quantiﬁcation \cite{sun2020bayesian, yang2020bpinns, yang2018deep, yang2019uncertainty, zhu2019physicsconstrained}, high-dimensional PDEs \cite{han2018highdimensional, sirignano2018dgm}, stochastic differential equations \cite{zhang2020modal}, fractional differential equations \cite{pang2020npinns, pang2019fpinns} and many more.

Despite these promising developments, significant limitations remain. AD-PINNs tend to underperform when confronted with complex geometries, intricate boundary conditions, high-frequency components, or multiscale phenomena \cite{raissi2018deep,fuks2020limitations, krishnapriyan2021characterizing, wang2022and}. 
In such settings, they often exhibit instability or fail to converge.
As a consequence of these issues, AD-PINNs frequently struggle to achieve the accuracy of conventional discretization schemes for challenging problems. Recent studies show that AD-PINNs still do not outperform traditional numerical methods such as the finite element method in terms of either accuracy or computational efficiency on common benchmark problems \cite{GrKoLaSc:24}.

The theoretical understanding of PINNs is not yet fully developed. 
Some initial analytical results have been obtained for AD-PINNs, for example, in \cite{DeJaMi:24, DeRyckMishra:22, DoBiBo:23, MishraMolinaro:22, wang2022and}. Under an infinite-width assumption, the AD-PINN training dynamics can be analyzed through the Neural Tangent Kernel, revealing convergence properties and the pronounced imbalance in gradient flow between different loss components \cite{wang2022and}.
In \cite{MishraMolinaro:22} a foundational framework for estimating the error in AD-PINNs is developed, demonstrating its applicability to various PDEs such as viscous scalar conservation laws and the incompressible Euler equations. 
This framework is extended in \cite{DeJaMi:24} to include more detailed error bounds for specific equations like the Navier-Stokes equations, which are fundamental for engineering and fluid dynamics applications. 
In \cite{DeRyckMishra:22} a similar approach is used to provide an error analysis of AD-PINNs for approximating Kolmogorov PDEs, establishing error bounds that depend on the architecture and choice of training points. Their work highlights the conditions under which AD-PINNs can achieve convergence and provides insights into selecting network hyperparameters effectively. 
Complementing these advances, a recent result in \cite{DoBiBo:23} shows that the AD-PINN loss may vanish while the true PDE error remains arbitrarily large, revealing a failure phenomenon. It is also shown that suitable regularization mitigates this effect. 
However, these findings concern specific instances of the AD-PINN methodology and do not resolve more fundamental questions about the well-posedness of PINN optimization problems.

Beyond regularization-based remedies, recent work has explored additional strategies to improve PINN performance, including the following:
\begin{enumerate*}[label=(\roman*)]
\item Innovative training schemes \cite{krishnapriyan2021characterizing, wang2021understanding, wang2022and}, which modify the optimization procedure to improve convergence behavior.
These include a sequence-to-sequence training strategy \cite{krishnapriyan2021characterizing}, a learning rate annealing algorithm \cite{wang2021understanding}, and utilizing the Neural Tangent Kernel framework \cite{wang2022and} in the optimization. 
\item Hybrid approaches \cite{cai2022least, lim2022physics,  pang2019fpinns,  su2024finite, xiang2022hybrid}, which combine neural network formulations with traditional numerical schemes such as finite difference, finite volume, or finite element methods to improve stability, enforce conservation, and handle complex geometries. These methods couple neural representations with discretized PDE operators, bridging data-free PINNs and classical solvers. 
\end{enumerate*}

In this paper we study both AD-PINNs and one such hybrid approach, the finite-difference PINN (FD-PINN), which replaces the continuous differential operators in the PDE residual with finite difference approximations on a discrete grid.
That is, instead of relying on automatic differentiation through the neural network to compute spatial or temporal derivatives, the FD-PINN is trained to satisfy a discretized version of the PDE. Early studies have reported several potential benefits of this strategy. For example, in \cite{JiShZhYa:23} it is suggested that using finite-difference stencils provides more direct derivative estimates, which may help decouple derivative accuracy from the neural network’s approximation error.
Notably, FD-PINNs have achieved marked success in regimes where AD-PINNs faltered. In \cite{JiShZhYa:23}, an FD-PINN with an AD-PINN is compared on the two-dimensional lid-driven cavity flow problem and it is reported that the FD-PINN yields more accurate solutions under identical architectures and training conditions.

While the incorporation of finite difference schemes into PINNs has shown promise, the scope and limits of FD-PINNs are not yet fully understood. The introduction of a mesh and discrete operators raises new questions: for example, how does the choice of grid resolution or difference stencil affect the convergence of the neural network training? What is the trade-off between neural network approximation error and discretization error in the overall solution accuracy? 
 Moreover, FD-PINNs do not entirely escape the pitfalls of AD-PINNs. They still involve training a deep neural network, which means issues like optimization instability or getting stuck in local minima can persist, albeit in modified form. There is currently a lack of theoretical guarantees for FD-PINNs analogous to those being developed for AD-PINNs. Almost none of the existing published results provide convergence rates or stability criteria for FD-PINNs.
 In addition, by introducing a fixed mesh, FD-PINNs may sacrifice some flexibility in handling complex geometries or adaptive refinement compared to mesh-free PINNs. These considerations point to the need for further research to delineate when FD-PINNs will succeed or fail, and how one might optimally design FD-PINN architectures for a given problem.

\paragraph{Contributions of this work}
This work establishes an analytical framework for analyzing the well-posedness of the optimization problems arising from the AD-PINN loss and from the FD-PINN loss. 
We recall that well-posedness is understood in the classical sense of Hadamard: a problem is well-posed if it admits a solution, the solution is unique, and the solution depends continuously on the input data. A violation of any of these conditions renders the problem ill-posed. 
In our setting, the relevant aspects are existence and uniqueness of minimizers of the AD-PINN or FD-PINN loss evaluated on finite collocation sets. Here and in the following, ``minimizer'' always refers to a global minimizer.
While previous studies have documented optimization difficulties and failure modes in PINNs, to the best of our knowledge no existing work has clarified whether these observations reflect an underlying issue of well-posedness. 
Here, we show analytically that both the AD-PINN and FD-PINN optimization problems are ill-posed. Our specific contributions are:
\begin{enumerate}
\item \textbf{Existence of minimizers.}
We prove that whenever the underlying PDE with boundary conditions admits a solution, the associated AD-PINN loss has a minimizer \nnew{within a neural network class}. Moreover, for any fixed finite-difference scheme, if the corresponding finite-difference discretized PDE admits a solution, then the FD-PINN loss constructed from that stencil also \nnew{admits} a minimizer \nnew{within a neural network class}.

\item \textbf{Neural network realizability of minimizers.}
We show that the AD-PINN loss attains the same minimal value whether it is minimized over an unrestricted function space or over depth-2 neural networks of finite width.
\nnew{In particular, our analysis yields an explicit sufficient width such that, for any minimizer of the AD-PINN loss, there exists a depth-2 neural network whose restriction to the collocation set coincides with that minimizer.}
Thus, within this width regime, the attainable accuracy of an AD-PINN is determined solely by the loss construction (collocation points, quadrature, data noise, etc.), rather than by architectural limitations.

\item \textbf{Ill-posedness.} We show analytically that the optimization problems arising from the AD-PINN loss and the FD-PINN loss are ill-posed, admitting non-unique minimizers and, in fact, infinitely many distinct solutions. 

\item \textbf{Equivalence of FD-PINNs and classical finite-difference schemes.}
For any fixed finite-difference discretization and choice of grid points, we show that the corresponding FD-PINN and the finite-difference formulation are equivalent on the grid: every solution of the finite-difference problem can be realized by an FD-PINN that matches it at all grid points, and conversely every FD-PINN minimizer coincides with a finite-difference minimizer on the grid.
In particular, if the discrete PDE has a unique solution and the FD-PINN loss contains no data term incompatible with that solution (so that a zero-loss minimizer exists), then all such FD-PINN solutions agree with the discrete PDE solution on the grid, even though they may differ between grid points.
This identifies FD-PINNs as neural network parameterizations of standard finite-difference schemes and provides a precise connection to classical mesh-based methods (see also \cite[Theorem 4.9]{LangerBehnamian:24} for an analogous equivalence in a variational setting).

\item \textbf{Numerical demonstrations.}
We present three numerical case studies illustrating the practical implications of our analysis:
\begin{enumerate*}[label=(\roman*)]
\item a Poisson problem with challenging boundary geometry, in which AD-PINNs can fail to converge while FD-PINNs successfully recover the solution;
\item a time-dependent Schrödinger equation, serving as a representative forward problem in which FD-PINNs perform comparably to AD-PINNs, and which, to the best of our knowledge, has not previously been treated using FD-PINNs with nonsmooth (ReLU) activations;
\item an inverse problem for the Navier–Stokes equations, demonstrating that FD-PINNs can successfully solve data-driven parameter identification even in the presence of noisy data. 
To the best of our knowledge, FD-PINNs have not previously been applied to inverse problems for PDEs of any kind.
\end{enumerate*}
Since FD-PINNs compute residuals using finite differences rather than AD-based derivatives, they do not require smooth activation functions. We therefore use ReLU activations for all FD-PINN experiments, illustrating that nonsmooth activations are fully admissible in this formulation. By contrast, AD-PINNs rely on smooth activations such as $\tanh$ to ensure well-defined automatic differentiation. 
Prior FD-PINN studies have largely adopted smooth activations for comparability rather than methodological necessity \cite{JiShZhYa:23, RoDuBuSu:24}.
\end{enumerate}

Our results complement and deepen the recent analysis in \cite{DoBiBo:23}. 
In that work, the authors constructed an explicit example (based on the heat 
equation) showing that the AD-PINN loss can vanish while the true PDE error 
becomes arbitrarily large, illustrating a failure mechanism. 
Our findings explain this phenomenon from a different perspective. 
In particular, our ill-posedness results show that non-uniqueness of minimizers is intrinsic to the AD-PINN and FD-PINN optimization problems themselves, and is 
not restricted to any specific PDE model. Thus, the behavior observed in 
\cite{DoBiBo:23} can be interpreted as one concrete instance of a more general 
structural non-identifiability. 
Indeed, our analysis shows that the distance between an AD-PINN minimizer and the true PDE solution can become arbitrarily large, while the loss remains minimal. In this sense, our analysis identifies the underlying optimization-theoretic reasons for both the successes and failures of PINNs, clarifying phenomena that previously appeared to depend on specific PDEs or sampling strategies. 

The rest of the paper is organized as follows: In \cref{Sec:Preliminaries} we introduce the notation, definitions and mathematical framework that underlie the subsequent sections. The AD-PINN formulation is analyzed in \cref{Sec:PINN}, where we identify conditions ensuring the existence of a solution to the associated optimization problem. 
We further prove that, whenever a solution exists, it is never unique, rendering the AD-PINN problem ill-posed. 
Similar results are established in \cref{Sec:FDPINN} for FD-PINNs. More precisely, we show that solutions of the FD-PINN loss coincide exactly with a finite-difference solution on the chosen stencil, while still admitting infinitely many distinct minimizers.  
Thus, although the global FD-PINN optimization problem is also ill-posed, it differs from the AD case in that whenever the discrete PDE admits a unique solution and a zero-loss FD-PINN minimizer exists, all such minimizers induce the same values on the stencil.
In \cref{Sec:NumericalExperiments} we present three numerical experiments illustrating the numerical implications of our analysis and the potential of FD-PINNs. We demonstrate that FD-PINNs can recover the solution to a PDE with challenging boundary geometry in a setting where AD-PINNs can fail, and we further show that FD-PINNs can be used for data-driven parameter identification in PDEs. We conclude with a short discussion in \cref{Sec:Conclusion}.

\section{Preliminaries}\label{Sec:Preliminaries}
This section fixes the notation and mathematical framework for differential operators, PDEs with boundary conditions, and the neural network classes considered later.

\subsection{Definitions and Notations}\label{Sec:Def+Not}

Let $\Omega\subset \R^d$, $d\in\N$, be a bounded domain (i.e., open and connected) with Lipschitz boundary $\partial \Omega$. Denote by $\mcU$ a function space on $\Omega$ and by $\mcV$ a function space on a set $S$, where $S$ is either $\Omega$ or $\partial\Omega$. An operator $\mcG : \mcU \to \mcV$ is called \emph{local} if for all $u,v \in \mcU$ and for all relatively open sets $\tilde{S} \subset S$ (i.e., $\tilde{S} = O \cap S$ for some open $O \subset \R^d$),
\[
u|_{\tilde{S}} = v|_{\tilde{S}} 
\quad \Longrightarrow \quad 
(\mcG u)|_{\tilde{S}} = (\mcG v)|_{\tilde{S}}.
\]
If $S=\Omega$, then the relative topology coincides with the usual topology, 
since $\Omega$ is open in $\R^d$.
Thus ``relatively open in $\Omega$'' simply means an ordinary open subset of $\Omega$. 
If $S=\partial\Omega$, then a set $\tilde{S} \subset \partial\Omega$ is relatively open 
if there exists an open set $O \subset \R^d$ such that 
$\tilde{S} = O \cap \partial\Omega$. 
In this way the same definition applies uniformly to both PDE operators (in the interior) and boundary operators.

In particular, in a PDE setting an operator $\mcG: U \to V$ between two Banach spaces $U,V$ is called a local differential operator of order $r\in\N_0$ if all (weak) derivatives $D^\beta u$ for $|\beta|\leq r$ are well defined for $u\in U$ and belong to $V$, and 
\[
\mcG(u)(x) = G (x, \{D^\beta u (x) \colon |\beta|\leq r\})
\]
for some function $G$. For boundary operators $D^\beta u (x)$ is understood as the trace of $D^\beta u$ at the boundary point $x$. For a sufficiently smooth scalar valued function $\sigma :\R \to \R$ and $k\in\N$ we denote by $\sigma^{(k)}$ its $k$-th continuous derivative.

Besides locality, we will also use a few standard notions from analysis.
A function $f:\Omega \to \R$ is called \emph{continuous and piecewise affine} if there exists a finite partition of polyhedra that cover $\Omega$ and $f$ is affine on each polyhedron and continuous in $\Omega$.
For a Banach space $U$ we denote its associated norm by $\|\cdot\|_U$.
A functional $\mcJ:U \to \overline{\R}:=\R \cup \{\pm\infty\}$ is said to be \emph{coercive}, if $\|u_n\|_U \to \infty$ implies $\mcJ(u_n) \to \infty$ for any sequence $(u_n)_n\subset U$. It is called \emph{\nnew{(weakly)} lower semicontinuous} if for all $u \in U$ we have that $\liminf_{n \to \infty} \mcJ(u_n) \ge \mcJ(u)$ for any sequence $(u_n)_n \subset U$ converging \nnew{(weakly)} to $u$ as $n \to \infty$.

\subsection{Problem}

Let $\Gamma \subseteq \partial\Omega$ denote the portion of the boundary on which boundary conditions are prescribed.
We consider an unknown field
\(
u : \overline{\Omega} \to \R^c,
\)
where $c\in\N$ denotes the number of components.
To describe the interior equation and the boundary conditions, we fix Banach spaces
\[
U \subset \{u:\overline{\Omega}\to\R^c\}, \qquad
V \subset \{v:\Omega\to\R^{c_\mcF}\}, \qquad
W \subset \{w:\Gamma\to\R^{c_\mcB}\},
\]
where $c_\mcF,c_\mcB\in\N$ allow for general systems of equations.  
The space $U$ represents the space of admissible solution functions, while $V$ and $W$ represent the ranges of the interior and boundary operators, respectively. 
We assume that $U$, $V$, and $W$ are continuously embedded into $L^\nu(\Omega,\R^c)$, $L^\nu(\Omega,\R^{c_\mcF})$, and $L^\nu(\Gamma,\R^{c_\mcB})$, respectively, for some fixed $\nu\in[1,\infty)$, so that they admit consistent discretizations via empirical $\ell^\nu$-norms when collocation points are introduced later.

A \emph{partial differential equation with boundary conditions} is given by
\begin{equation}\label{PhysicalModel}
\begin{split}
\mcF(u)(z) &= 0 \qquad z\in\Omega,\\
\mcB(u)(z) &= 0 \qquad z\in\Gamma\subseteq\partial\Omega,
\end{split}
\end{equation}
where \(u\in U\) is the unknown.
Here \(\mcF:U\to V\) is a (possibly nonlinear) differential operator of order \(r_{\mcF}\in\N_0\) describing the interior equation, and \(\mcB:U\to W\) is a differential operator of order \(r_{\mcB}\in\N_0\) describing the boundary condition.
By a ``differential operator'' we mean that, for $\mcF$, the value $\mcF(u)(z)$ depends only on $z$ and the derivatives $D^\beta u(z)$ with $|\beta|\le r_{\mcF}$. 
Likewise, a boundary operator of order $r_{\mcB}$ depends only on $z\in\Gamma$ and the derivatives $D^\beta u(z)$ with $|\beta|\le r_{\mcB}$.
Thus both $\mcF$ and \(\mcB\) are local differential operators in the sense introduced in \cref{Sec:Def+Not}.
That is, there exist functions $F$ and $B$ such that, for $u\in U$ and $z\in\Omega$,
\begin{equation}\label{eq:F}
\mcF(u)(z)
= F\left(z,\{D^\beta u (\nnew{z}) \colon |\beta|\leq r_\mcF\}\right),
\end{equation}
and similarly, for $z\in\Gamma$,
\begin{equation}\label{eq:B}
\mcB(u)(z)
= B\left(z,\{D^\beta u (\nnew{z}) \colon |\beta|\leq r_\mcB\}\right).
\end{equation}
Since we consider \eqref{PhysicalModel} in its strong form only, to ensure that all quantities are well-defined pointwise, the solution space $U$ is required to satisfy
\[
U \subseteq C^{r_{\mcF}}(\Omega,\R^c) \cap C^{r_{\mcB}}(\overline{\Omega},\R^c).
\]
Since $\Omega$ is bounded and $C^{r_{\mcF}}(\Omega,\R^c) \cap C^{r_{\mcB}}(\overline{\Omega},\R^c) \subset C(\overline{\Omega}\nnew{, \R^c})$, every $u\in U$ is bounded and the continuous embedding of $U$ in $L^\nu(\Omega\nnew{, \R^c})$ holds automatically for all $1\leq \nu \leq \infty$; see \cite[2.14 Theorem]{AdamsFournier:03}.

\subsection{Neural Networks}\label{Sec:NN}

We use fully connected feedforward neural networks, simply referred to as \emph{neural networks}, with elementwise (componentwise) activation
\(\sigma:\R\to\R\) in all hidden layers and a linear output layer.
Let \(L\in\N\) denote the depth, with input dimension \(d_0=d\) and output dimension \(d_L=c\), matching the number of components of the unknown field $u:\overline{\Omega} \to \R^c$. A neural network realizes a function \(f:\R^{d_0}\to\R^{d_L}\) of the form
\begin{equation*}
\begin{split}
&\varphi_{0}(x)=x,\\
&\varphi_{i}(x)=\sigma\!\left(W_{i}\varphi_{i-1}(x)+b_{i}\right)\ \ \text{for }i=1,\dots,L-1,\\
&f(x)=W_{L}\varphi_{L-1}(x)+b_{L},
\end{split}
\end{equation*}
with weights \(W_{i}\in\R^{d_i\times d_{i-1}}\) and biases \(b_{i}\in\R^{d_i}\), where \(d_i \in \N\) for \(i=0, \ldots, L\in\N\).

Let $\mathcal{N}_L$ denote the class of depth-$L$ neural networks 
$f:\R^d \to \R^c$ with arbitrary hidden-layer widths.  
We define the associated hypothesis space
\begin{equation*}
    \mcH
    := \{\, f|_{\overline{\Omega}} : f \in \mathcal{N}_L \}
    \subseteq \{\, u : \overline{\Omega} \to \R^c \}.
\end{equation*}
Thus, $\mcH$ consists of all functions realizable by depth-$L$ neural networks
when restricted to $\overline{\Omega}$, since only the behavior on $\overline{\Omega}$ enters the PDE formulation. 
Allowing arbitrary widths is important, as the resulting class $\mcH$ is then closed under linear combinations (see \cref{sec:closure}), a property used later in the analysis.

For a given choice of layer widths 
$(d_0,\dots,d_L)$, the total number of parameters (weights and biases) is
\[
M = \sum_{i=0}^{L-1} d_{i+1}\,(d_i + 1).
\]
We then define $\mcH^M$ as the subset of $\mcH$ consisting of all functions realizable by depth-$L$ neural networks with exactly $M$ parameters. 
In particular, $\mcH^M \subset \mcH$ and $\mcH$ is the union of 
$\mcH^M$ over all admissible architectures.
The collection of all weights and biases of a neural network is denoted by $\theta\in\R^M$. In the sequel for a neural network $f$ we will often write $f_\theta$ to express its dependency on the weights and biases.

\paragraph{Differentiability}
Whenever derivatives $D^\beta f$ of the neural network output are required (for example when evaluating differential operators of order $r_{\mcF}$ or $r_{\mcB}$), their existence must be guaranteed both in the interior and, for $|\beta|\le r_{\mcB}$, on the boundary.
This is ensured whenever the activation function $\sigma$ is $C^{r_{\max}}$ with  $r_{\max} := \max(r_{\mcF}, r_{\mcB})$, since all derivatives $D^\beta f$ with $|\beta|\le r_{\max}$ then exist classically. 
Typical smooth activations satisfying this regularity assumption include the sigmoid, $\tanh$, softplus, GELU, and other $C^\infty$ functions, all of which ensure that the required classical derivatives exist.

However, we will frequently work with the rectifier linear unit (ReLU) activation $\sigma(x)=\max\{0,x\}$. By ReLU-NN we denote a neural network whose activation functions are ReLUs. Although ReLU-NNs do not belong to the classical spaces $C^r(\overline{\Omega})$, $r\in\N$, required for our strong PDE formulation, the AD-PINN loss only requires pointwise evaluations of a function $f_\theta$ and its derivatives at a finite set of collocation points. 
Since ReLU-NNs are differentiable except at finitely many kink locations, we restrict attention to the subclass
\[
\mcH_{\textrm{reg}}:=\{f_\theta \in \mcH \colon f_\theta \text{ is } C^{r_{\max}}\text{ at all collocation points} \},
\]
on which the AD-PINN loss is fully well-defined. This explains why ReLU could sometimes be still used in practice for AD-PINNs, even though it does not belong to the classical function space $U$. Analogues to above we define by $\mcH_{\textrm{reg}}^M \subset \mcH_{\textrm{reg}}$ all depth-$L$ neural networks in $\mcH_{\textrm{reg}}$ with exactly $M$ parameters.

A similar restriction is unnecessary for FD-PINNs: the FD-PINN loss involves only finite-difference stencils and therefore does not require continuous differentiability of a neural network. Consequently, ReLU-NNs can be used for FD-PINNs without any additional regularity assumptions at the collocation points.

A key structural property of ReLU-NNs is that ReLU-NNs of smaller depth can be embedded exactly into deeper ReLU-NNs: $x = \max\{0,x\} - \max\{0,-x\}$ for all $x\in\R$ provides an explicit construction of the identity map, allowing one to insert pairs of layers without changing the realized function. That is, any depth-$\tilde{L}$ ReLU-NN with $\tilde{L}<L$ can also be realized exactly by a deeper neural network of depth $L$, which we will make use of in our theory.

\section{AD-PINN Framework}\label{Sec:PINN}

To solve \eqref{PhysicalModel} in an AD-PINN framework, we first introduce a 
continuous loss functional that measures violation of the PDE and its 
boundary conditions and then minimize this functional over a class \(\mcH \subseteq U\) 
of depth-\(L\) neural networks introduced in \cref{Sec:NN}. This leads to the optimization problem 
\begin{align}\label{eq:continuousPinn}
\min_{u_\theta\in \mcH} \{\mcJ(u_\theta):=\alpha_{\mcF}\|\mcF(u_\theta)\|_V + \alpha_{\mcB}\|\mcB(u_\theta)\|_W\}
\end{align}
with weights \(\alpha_{\mcF},\alpha_{\mcB}\ge 0\), which we refer to as the \emph{continuous PINN}, since the objective is defined through Banach-space norms of the residuals.
In this continuous setting, the loss $\mcJ$ consists solely of the physics- and boundary-based terms;
observational data, when available, will only be incorporated later when we discuss the AD-PINN, i.e., the formulation in which these functional norms are approximated by quadrature on a finite set of collocation points while retaining analytical derivatives.

Note that $u\in U$ solves \eqref{PhysicalModel} if and only if $\mcJ(u)=0$, since $\mcF(u)=0$ and $\mcB(u)=0$ precisely characterize solutions of \eqref{PhysicalModel}. In particular, if $u_\theta\in\mcH\subseteq U$ such that $\mcJ(u_\theta)=0$, which implies that $u_\theta$ is a minimizer of \eqref{eq:continuousPinn}, then $u_\theta$ solves also \eqref{PhysicalModel} and is a solution of $\argmin_{u\in U} \mcJ(u)$.

\subsection{Existence of Minimizers}
Classical universal approximation theorems \cite{hornik1991approximation,LeLiPiSch:93} ensure that neural networks with non-polynomial activations are dense in $C(\overline{\Omega}, \R^c)$ and in $L^{\nu}(\Omega,\R^c)$ for $1 \le \nu < \infty$, and, if the activation is $r$-times continuously differentiable and non-polynomial, also dense in $C^{r}(\overline{\Omega},\R^c)$ for any finite $r \ge 1$. 
Hence any sufficient regular solution of \eqref{PhysicalModel} can be approximated arbitrary well by elements of $\mcH$.
However, density alone does not imply that \eqref{eq:continuousPinn} admits a minimizer.
Depending on the choice of $\mcH$ and on the structure of \eqref{PhysicalModel}, the variational problem \eqref{eq:continuousPinn} may fail to attain its infimum even when \eqref{PhysicalModel} itself has a classical solution.
For instance, suppose $\mcH$ consists of neural networks with smooth, real-analytic activations such as $\tanh$ \cite{NguyenHein2017}. Then every $u\in\mcH$ is real-analytic on $\overline{\Omega}$ \cite{NguyenHein2017}, whereas there exist problems \eqref{PhysicalModel} whose unique classical solution $\hat{u}$ lies in $C^1(\overline{\Omega},\R^c)$ but is not real-analytic. By universal approximation results, one can find a sequence $(u_n)_n \subset \mcH$ with $\mcJ(u_n) \to \mcJ(\hat{u})$ for $n\to\infty$, but the infimum of \eqref{eq:continuousPinn} over $\mcH$ is not attained.

This phenomenon can also be understood from an optimization-theoretic perspective via the Weierstraß theorem. The theorem guarantees existence of a minimizer if $\mcJ$ is lower semicontinuous and if the level set 
$\{u_\theta \in\mcH \colon \mcJ(u_\theta)\leq \mcJ(\bar u_\theta)\}$ is compact for some $\bar u_\theta\in \mcH$. 
\nnew{These assumptions hold, for example, if $U$ is a reflexive Banach space, $\mcJ$ is coercive and weakly lower semicontinuous, and the characteristic function $\chi_{\mcH}: U \to \overline{\R}$, defined by $\chi_{\mcH}(u)=0$ for $u\in\mcH$ and $\chi_{\mcH}(u)=+\infty$ otherwise, is weakly lower semicontinuous.}
However, the \nnew{weak} lower semicontinuity of $\chi_{\mcH}$ may fail, precisely as in the analytic-activation example above, and \nnew{weak} compactness of the level sets is generally not guaranteed. Consequently, one cannot expect \eqref{eq:continuousPinn} to admit a minimizer in general.

To address the possible lack of compactness of level sets of $\mcJ$ on $\mcH$, one may restrict the admissible neural networks to a parameter-bounded subset
\[
\mcH_{c_\theta}^M := \{\,u_\theta \in \mcH^M : |\theta|_{q} \le c_\theta\,\},
\qquad c_\theta\ge 0,\; q\in\N\cup\{\infty\},
\]
where $|\cdot|_q$ denotes the standard $\ell^q$-norm.
This leads to the constrained problem
\begin{equation}\label{eq:continuousPinn:c}
    \min_{u_\theta \in \mcH_{c_\theta}^M} \mcJ(u_\theta)
    = 
    \min_{\theta\in \R^M, |\theta|_{q}\le c_\theta}\mcJ(u_\theta).
\end{equation}
where $c_\theta$ is typically chosen large such that \eqref{eq:continuousPinn:c} is a close approximation to \eqref{eq:continuousPinn}. 
The constraint $|\theta|_q \leq {c_\theta}$ with ${c_\theta}<\infty$ ensures that $\theta$ stays bounded and renders the solution space $\{\theta \in \R^M \colon |\theta|_q\leq {c_\theta}\}$ compact, where $M\in\N$ is fixed. 
Assuming that $\mcF$, $\mcB$, and the activation $\sigma$ are continuous, the existence of a minimizer of \eqref{eq:continuousPinn:c} follows 
by similar arguments as in the proof of \cite[Theorem 3.4]{LangerBehnamian:24}. 
The restriction to finite $M$ is essential, because without a finite number of weights and biases the parameter space is non-compact even under a norm bound \cite[2.5-5 Theorem]{Kreyszig1991}.

Instead of imposing a hard constraint on $\theta$, one may incorporate a so-called ridged regularization \cite{DoBiBo:23}, i.e., a norm penalty on $\theta$ into the objective:
\begin{equation}\label{eq:continuousPinn:ridged}
\min_{u_\theta\in \mcH^M} \mcJ(u_\theta) + \alpha_{\theta} |\theta|_q=  \min_{\theta \in \R^M}\mcJ(u_\theta) + \alpha_{\theta} |\theta|_q,
\end{equation}
where $\alpha_{\theta}> 0$. Under the same continuity assumptions on $\mcF$, $\mcB$, and the activation function
$\sigma$, the map $\theta \mapsto \mcJ(u_\theta)$ is continuous. Since the ridge term
$\alpha_\theta |\theta|_q$ is coercive on $\R^M$, the full objective in
\eqref{eq:continuousPinn:ridged} is continuous and coercive,  and therefore
attains its minimum.

As \eqref{eq:continuousPinn}, \eqref{eq:continuousPinn:c} and \eqref{eq:continuousPinn:ridged} involve Banach space norms, they require integration over $\Omega$ and $\Gamma$ and cannot be evaluated exactly in practice. 
In the discrete setting, referred to as AD-PINNs, the continuous formulations \eqref{eq:continuousPinn}, \eqref{eq:continuousPinn:c} and \eqref{eq:continuousPinn:ridged} are replaced by finite-sum minimization problems over collocation points. 
We denote by $\Omega^h\subset\Omega$ and $\Gamma^h\subset\Gamma$ the sets of interior and boundary collocation points, respectively, and by $\mcD^h\subseteq \Omega^h \cup \Gamma^h$ the locations of observation data $u^*$. 
We associate quadrature weights $\omega_\mcF^z,\omega_\mcB^z,\omega_\mcD^z$ with these sets, typically chosen as $\omega_\mcF^z = 1/|\Omega^h|$, $\omega_\mcB^z = 1/|\Gamma^h|$, $\omega_\mcD^z = 1/|\mcD^h|$.
To allow for functions that are not globally $C^{r_{\mcF}}$ or $C^{r_{\mcB}}$ (such as ReLU-NNs), we introduce the node-regular space
\[
U^h
:= 
\left\{u:\overline\Omega\to\R^c \colon D^\beta u \text{ exists classically on }\Omega^h (|\beta|\le r_{\mcF}) \text{ and on }\Gamma^h (|\beta|\le r_{\mcB}) \right\}.
\]
By construction $U\subset U^h$, and any $\mcH$ consisting of functions that are continuously differentiable up to order $r_{\mcF}$ on $\Omega^h$ and up to order $r_{\mcB}$ on $\Gamma^h$, for example the class $\mcH_{\mathrm{reg}}$ with ReLU activations introduced in \cref{Sec:NN}, satisfies $\mcH\subset U^h$. 

We use the local expressions \eqref{eq:F} and \eqref{eq:B} to extend the evaluation of $\mcF$ and $\mcB$ to $U^h$. For $u\in U^h$ and $z\in\Omega^h$ we define
\[
\widehat{\mcF}(u)(z)
:=
F\left(z,\{D^\beta u (\nnew{z}) \colon |\beta|\leq r_\mcF\}\right),
\]
and for $z\in\Gamma^h$ we define
\[
\widehat{\mcB}(u)(z)
:=
B\left(z,\{D^\beta u (\nnew{z}) \colon |\beta|\leq r_\mcB\}\right).
\]
For $u\in U$ these definitions agree with the original operators at the
collocation points,
\[
\widehat{\mcF}(u)(z) = \mcF(u)(z),\quad z\in\Omega^h,
\qquad
\widehat{\mcB}(u)(z) = \mcB(u)(z),\quad z\in\Gamma^h.
\]
In particular, $\widehat{\mcF}$ and $\widehat{\mcB}$ provide a well-defined discrete residual for all $u\in U^h$, including nonsmooth functions such as ReLU-NNs, provided the required derivatives exist at the collocation points.
For $\nu\in[1,\infty)$ the AD-PINN functional is then defined on $U^h$ by
\begin{equation}\label{eq:discretePINN}
\begin{split}
\mcJ^h(u) 
:= \alpha_{\mcF}\sum_{z\in\Omega^h}
 \omega_{\mcF}^z \, \left|\widehat{\mcF}(u)(z)\right|_\nu^\nu 
&+ \alpha_{\mcB}\sum_{z\in\Gamma^h}
 \omega_{\mcB}^z \, \left|\widehat{\mcB}(u)(z)\right|_\nu^\nu  \\
&+ \alpha_{\mcD}\sum_{z\in\mcD^h}  
 \omega_{\mcD}^z \, \left|u(z) - u^*(z)\right|_\nu^\nu,
 \end{split}
\end{equation}
where $\alpha_\mcD\geq 0$.
For $u\in U$ this coincides with the AD-PINN functional obtained
by using $\mcF(u)(z)$ and $\mcB(u)(z)$ in place of 
$\widehat{\mcF}(u)(z)$ and $\widehat{\mcB}(u)(z)$.
With this notation, the AD-PINN problems read as
\begin{align}
\text{(unconstrained)} &\quad 
\min_{u \in \mcH} \; \mcJ^h(u), 
\label{eq:discretePINN:plain} \\
\text{(constrained)} &\quad 
\min_{u \in \mcH_{c_\theta}^M} \; \mcJ^h(u),
\label{eq:discretePINN:c} \\
\text{(regularized)} &\quad 
\min_{\theta \in \R^M} \; \mcJ^h(u_\theta) + \alpha_\theta |\theta|_q.
\label{eq:discretePINN:rigid}
\end{align}

While for \eqref{eq:continuousPinn} we cannot guarantee the existence of a minimizer, even when \eqref{PhysicalModel} itself has a classical solution, the situation changes after discretization. Although the Weierstra\nnew{ß} theorem does not need to hold for \eqref{eq:discretePINN:plain}, since compactness and lower semicontinuity issues from the continuous setting may persist, the discrete functional $\mcJ^h$ is nevertheless more favourable and existence will follow from a different structural argument that we develop below. To prepare for this existence result, we first establish the following structural property:
for depth-2 neural networks with sufficiently smooth activations, every minimizer of the AD-PINN loss in $U$ can be realized by a finite-width neural network, and conversely every finite-width AD-PINN minimizer is also a minimizer over $U$.
\begin{proposition}\label{thm:MinimizerCoincide}
Let $U \subseteq C^{r_\mcF}(\Omega,\R^c) \cap C^{r_\mcB}(\overline{\Omega},\R^c)$.  
Consider finite collocation sets 
\(
\Omega^h = \{z_\mcF^i\}_{i=1}^{N_\mcF} \subset \Omega\) and \( \Gamma^h = \{z_\mcB^j\}_{j=1}^{N_\mcB} \subset \Gamma\) with $N_\mcF,N_\mcB\in\N$ and set $\ell:= N_\mcF (d +r_\mcF) + N_\mcB(d+r_\mcB)$. 
Let $\mcH^M$ denote a class of depth-$2$ neural networks with  $c\binom{\ell}{d}$ hidden units (i.e., $M = c\binom{\ell}{d}(d+1) + c(c\binom{\ell}{d} + 1)$) and activation $\sigma \in C^{\ell-d}(\R,\R)$, satisfying $\sigma^{(k)}(a) \neq 0$ for some $a\in\R$ and all $0 \le k \le \ell-d$. 
Then we have that 
\begin{enumerate}[(i)]
\item if $\hat{u}\in \argmin_{u \in U} \mcJ^h(u)$, then there is $u_\theta\in \mcH^M$ that minimizes $\mcJ^h$ over $\mcH^M$ 
with $\mcJ^h(u_\theta) = \mcJ^h(\hat{u})$ and $u_\theta(z) = \hat{u}(z)$ for all $z\in\Omega^h\cup\Gamma^h$;
\item if $\hat{u}_\theta\in \argmin_{u_\theta \in \mcH^M} \mcJ^h(u_\theta)$, then $\hat{u}_\theta \in \argmin_{u \in U} \mcJ^h(u)$.
\end{enumerate}
\end{proposition}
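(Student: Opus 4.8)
\emph{Plan.} The strategy is to isolate one structural lemma and derive both assertions from it: for \emph{every} $u\in U$ there is a network $u_\theta\in\mcH^M$ whose value and all derivatives that enter $\mcJ^h$ coincide with those of $u$ at every collocation point. The starting observation is that $\mcJ^h$ depends on $u$ only through the finite \emph{jet data}
\[
J(u):=\Big(\{D^\beta u(z):|\beta|\le r_\mcF\}_{z\in\Omega^h},\ \{D^\beta u(z):|\beta|\le r_\mcB\}_{z\in\Gamma^h}\Big),
\]
because $\widehat{\mcF}(u)(z)$, $\widehat{\mcB}(u)(z)$ and the data terms $u(z)-u^*(z)$ all depend on $u$ solely through these numbers (the value $u(z)$ being the $\beta=0$ entry). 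Hence $J(u_\theta)=J(u)$ forces $\mcJ^h(u_\theta)=\mcJ^h(u)$ and $u_\theta(z)=u(z)$ on $\Omega^h\cup\Gamma^h$ at once. I would also note here that, since $\sigma\in C^{\ell-d}$ with $\ell-d\ge r_{\max}$ (using $N_\mcF,N_\mcB\ge1$), every element of $\mcH^M$ is $C^{r_\mcF}(\Omega)\cap C^{r_\mcB}(\overline\Omega)$, so $\mcH^M\subseteq U$ and both classes are compared through the same functional.

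The core is therefore a jet-matching lemma. First I would reduce to scalar outputs: using a block-diagonal output matrix $W_2$ one dedicates a separate block of $\binom{\ell}{d}$ hidden units to each of the $c$ components (this is exactly why the width is $c\binom{\ell}{d}$), so it suffices to match, for one scalar component $g=u_s$, the $Q:=N_\mcF\binom{d+r_\mcF}{d}+N_\mcB\binom{d+r_\mcB}{d}$ prescribed jet values using $\binom{\ell}{d}$ ridge neurons $x\mapsto\sigma(w_j\cdot x+b_j)$. By superadditivity of $\binom{\cdot}{d}$ (Vandermonde), $\binom{\ell}{d}=\dim\mathbb{P}_{\ell-d}(\R^d)\ge Q$ (with equality for a single collocation point), so the neuron budget is at least the number of conditions. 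I would fix the hidden parameters $(w_j,b_j)$ first and solve for the output weights $a_j$: each jet condition $\sum_j a_j\,D^\beta\big[\sigma(w_j\cdot x+b_j)\big](z)=D^\beta g(z)$ is \emph{linear} in $(a_j)$, giving a system $A\mathbf a=\mathbf t$ with $Q$ rows and $\binom{\ell}{d}$ columns; solvability for every right-hand side follows once $A$ has full row rank $Q$ for some admissible $(w_j,b_j)$, the surplus columns then being switched off.

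The full-rank step is where the real work and the activation hypothesis enter, and I expect it to be the main obstacle. Equivalently, I must show the jet functionals $u\mapsto D^\beta u(z)$ are \emph{separated} by the neuron family, i.e.\ that no nontrivial coefficients $(\lambda_{\beta,z})$ satisfy $\sum_{\beta,z}\lambda_{\beta,z}\,D^\beta[\sigma(w\cdot x+b)](z)=0$ for all $(w,b)$. Writing the left-hand side as $\sum_z\sum_k\big(\sum_{|\beta|=k}\lambda_{\beta,z}w^\beta\big)\sigma^{(k)}(w\cdot z+b)$, I would first pick $w$ generic so that the projected nodes $s_z:=w\cdot z$ are pairwise distinct (possible since the $z$ are distinct and the bad $w$ form a finite union of hyperplanes); this turns vanishing-for-all-$b$ into a linear dependence among the shifted derivatives $\{\sigma^{(k)}(s_z+\cdot)\}_{z,k}$, which the hypothesis $\sigma^{(k)}(a)\ne0$ for $0\le k\le\ell-d$ is precisely designed to exclude via a Wronskian/Vandermonde-type nondegeneracy, forcing each $\sum_{|\beta|=k}\lambda_{\beta,z}w^\beta=0$. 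Since this holds on an open set of $w$, independence of the monomials $w^\beta$ of fixed degree yields $\lambda_{\beta,z}=0$ for all $\beta,z$, so the functionals are separated, $A$ attains full rank, and the jet-matching lemma follows.

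Finally I would assemble the claims. The lemma gives $\inf_{\mcH^M}\mcJ^h\le\inf_U\mcJ^h$, while $\mcH^M\subseteq U$ gives the reverse, so $\inf_{\mcH^M}\mcJ^h=\inf_U\mcJ^h=:m$. For (i), applying the lemma to a minimizer $\hat u$ yields $u_\theta\in\mcH^M$ with $\mcJ^h(u_\theta)=\mcJ^h(\hat u)=m$ and $u_\theta(z)=\hat u(z)$ on $\Omega^h\cup\Gamma^h$; since $m=\inf_{\mcH^M}\mcJ^h$, this $u_\theta$ minimizes $\mcJ^h$ over $\mcH^M$. For (ii), if $\hat u_\theta\in\argmin_{\mcH^M}\mcJ^h$ then $\mcJ^h(\hat u_\theta)=m=\inf_U\mcJ^h$, and as $\hat u_\theta\in\mcH^M\subseteq U$ attains $m$, it lies in $\argmin_U\mcJ^h$ (equivalently, any $v\in U$ of strictly smaller loss could be jet-matched inside $\mcH^M$, contradicting minimality). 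The counting and these corollaries are routine; the surjectivity of the jet map in the third paragraph is the crux.
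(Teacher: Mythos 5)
Your overall architecture is the same as the paper's: both proofs reduce everything to a jet-matching (Hermite interpolation) lemma for one-hidden-layer networks of width $c\binom{\ell}{d}$, observe that $\mcH^M\subseteq U$ because $\ell-d\ge r_\mcF+r_\mcB+d$, and then transfer minimality back and forth exactly as in your final paragraph. The paper obtains the interpolation lemma as \cref{cor:pointwise-bc-vector}, derived from the Llanas--Lantar\'on Hermite interpolation theorem (\cref{thm:ll10}) together with an admissible-hyperplane construction (\cref{Lem:AdmissiblePlanes}); you instead try to prove it from scratch by a ``fix hidden weights, solve a linear system for the output weights, show full row rank by duality'' argument. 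That is where the proposal breaks down.

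The gap is in the separation step. You claim that if $\sum_{z}\sum_k c_{k,z}(w)\,\sigma^{(k)}(s_z+b)=0$ for all $b$ (with $c_{k,z}(w)=\sum_{|\beta|=k}\lambda_{\beta,z}w^\beta$ and distinct $s_z=w\cdot z$), then the hypothesis $\sigma^{(k)}(a)\neq 0$ for $0\le k\le\ell-d$ rules out any linear dependence among the shifted derivatives $\{\sigma^{(k)}(s_z+\cdot)\}_{z,k}$ and hence forces each $c_{k,z}(w)=0$. This is false: $\sigma=\exp$ satisfies every hypothesis of the proposition, yet $\sigma^{(k)}(s_z+b)=e^{s_z}e^{b}$ for all $k$ and $z$, so the entire family spans a one-dimensional space and your Wronskian/Vandermonde step collapses. (Separation does still hold for $\exp$, but only by exploiting the variation in $w$ --- linear independence of $w\mapsto w^\beta e^{w\cdot z}$ over distinct $z$ --- which is a different mechanism from the one you invoke.) There is a second, related problem: the hypothesis constrains $\sigma$ only at the single point $a$, and $\sigma$ is merely $C^{\ell-d}$, not analytic, so a global ``vanishes for all $(w,b)$'' duality argument cannot be reduced to a pointwise nonvanishing condition without substantial extra work. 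This is precisely why the paper routes through \cref{thm:ll10}, whose construction is local near $a$ (after the shift $\tilde\sigma(t)=\sigma(t+a)$) and pays for this locality with the larger width $\binom{\ell}{d}$ rather than your condition count $Q=N_\mcF\binom{d+r_\mcF}{d}+N_\mcB\binom{d+r_\mcB}{d}$. Your counting, the reduction to scalar outputs, the inclusion $\mcH^M\subseteq U$, and the final assembly of (i) and (ii) are all correct and match the paper; the crux you yourself flag --- surjectivity of the jet map --- is exactly the part that is not established.
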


\begin{proof}
\begin{enumerate}[(i)]
\item Let $\hat{u}\in \argmin_{u \in U} \mcJ^h(u)$. Then by \cref{cor:pointwise-bc-vector} there is a Hermite interpolant $u_\theta\in \mcH^M$ of $\hat{u}$ such that
\begin{equation}\label{Eq:HermiteEquations}
\begin{split}
D^\beta u_\theta(z_\mcF^i)=D^\beta \hat{u}(z_\mcF^i)\quad\text{for all }i=1,\dots,N_\mcF,\;|\beta|\le r_\mcF,\\
D^\gamma u_\theta(z_\mcB^j)=D^\gamma \hat{u}(z_\mcB^j)\quad\text{for all }j=1,\dots,N_\mcB,\;|\gamma|\le r_\mcB,
\end{split}
\end{equation}
which implies that $\mcJ^h(u_\theta) = \mcJ^h(\hat{u})$. Assume there exists $\tilde{u}_\theta\in\mcH^M$ with $\mcJ^h(\tilde{u}_\theta) < \mcJ^h(u_\theta)$. Note that functions in $\mcH^M$ have at least regularity $C^{r_\mcF + r_\mcB + d}$, since for $N_\mcF = N_\mcB=1$ we obtain $\ell=r_\mcF + r_\mcB + 2d$, and hence $\mcH^M \subseteq U$. 
Consequently $\tilde{u}_\theta \in U$ which contradicts the optimality of $\hat{u}$ in $U$ and hence $u_\theta \in \argmin_{u \in \mcH^M} \mcJ^h(u)$.

\item Let $\hat{u}_\theta\in \argmin_{u \in \mcH^M} \mcJ^h(u)$ and note that $\hat{u}_\theta\in U$, since $\mcH^M\subseteq U$. 
Assume there is a ${u}\in U$ such that $\mcJ^h({u}) < \mcJ^h(\hat{u}_\theta)$. 
Use \cref{cor:pointwise-bc-vector} to construct a Hermite interpolant ${u}_\theta \in \mcH^M$ with properties \eqref{Eq:HermiteEquations}. 
Then $\mcJ^h({u}_\theta) = \mcJ^h(u)< \mcJ^h(u_\theta)$, which contradicts the optimality of $\hat{u}_\theta$. Hence $\hat{u}_\theta \in \argmin_{u \in U} \mcJ^h(u)$.
\end{enumerate} 
\end{proof}
In words, \cref{thm:MinimizerCoincide}~(i) says that if the AD-PINN functional $\mcJ^h$ attains its minimum over the full space $U$, then the depth-2 neural network class $\mcH^M$ specified in \cref{thm:MinimizerCoincide} is expressive enough to contain at least one minimizer. \cref{thm:MinimizerCoincide}~(ii) states the converse inclusion: any minimizer over $\mcH^M$ is already a minimizer over $U$. 
Thus, under the stated assumptions, although the full class $\mcH$ may contain additional minimizers, the restricted class $\mcH^M$ is nevertheless guaranteed to contain at least one minimizer of $\mcJ^h$.
The following corollary makes this connection to the AD-PINN problem
\eqref{eq:discretePINN:plain} explicit.
\begin{corollary}\label{cor:solution:ADPINN}
Let the assumptions and notations of \cref{thm:MinimizerCoincide} hold, and assume that the AD-PINN problem \eqref{eq:discretePINN:plain} admits at least one minimizer in $\mcH$. Then there exists a neural network $u_\theta \in \mcH^M$ such that $u_\theta$ solves \eqref{eq:discretePINN:plain}.
\end{corollary}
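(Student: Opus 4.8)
The plan is to reduce the statement to \cref{thm:MinimizerCoincide}~(i), whose hypothesis requires a minimizer over the full space $U$, not merely over $\mcH$. So the real work is to promote the assumed minimizer over $\mcH$ to a minimizer over $U$; once that is done, part~(i) of the proposition delivers the desired depth-$2$ network essentially for free. Let $\hat u\in\mcH$ be a minimizer of $\mcJ^h$ over $\mcH$ guaranteed by assumption, so that $\mcJ^h(\hat u)=\min_{u\in\mcH}\mcJ^h(u)$. The depth-$2$ class $\mcH^M$ satisfies $\mcH^M\subseteq\mcH$ by construction, and I will combine this inclusion with the Hermite interpolation result \cref{cor:pointwise-bc-vector} to show that $\hat u$ is in fact optimal over all of $U$.

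Concretely, fix an arbitrary $u\in U$. By \cref{cor:pointwise-bc-vector} there is a Hermite interpolant $u_\theta\in\mcH^M$ matching the required derivative data of $u$ at every collocation point, i.e.\ satisfying \eqref{Eq:HermiteEquations}. Because $\widehat{\mcF}(\cdot)(z)$ and $\widehat{\mcB}(\cdot)(z)$ depend only on the derivatives $D^\beta u(z)$ of orders $|\beta|\le r_\mcF$ and $|\beta|\le r_\mcB$ respectively, and since the matching includes the zeroth-order values at the data nodes $\mcD^h\subseteq\Omega^h\cup\Gamma^h$, all three sums defining $\mcJ^h$ coincide term by term, giving $\mcJ^h(u_\theta)=\mcJ^h(u)$. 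The inclusion $u_\theta\in\mcH^M\subseteq\mcH$ then yields $\mcJ^h(u)=\mcJ^h(u_\theta)\ge\min_{u'\in\mcH}\mcJ^h(u')=\mcJ^h(\hat u)$. As $u\in U$ was arbitrary, this proves $\hat u\in\argmin_{u\in U}\mcJ^h(u)$.

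With a minimizer over $U$ now in hand, I invoke \cref{thm:MinimizerCoincide}~(i): it produces a neural network $u_\theta\in\mcH^M$ with $\mcJ^h(u_\theta)=\mcJ^h(\hat u)$. Since $\mcJ^h(\hat u)=\min_{u\in\mcH}\mcJ^h(u)$ and $u_\theta\in\mcH^M\subseteq\mcH$, this $u_\theta$ attains the minimum of $\mcJ^h$ over $\mcH$ and hence solves \eqref{eq:discretePINN:plain}, which is the claim.

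The only delicate point is the passage from optimality over $\mcH$ to optimality over $U$: a priori a minimizer over the smaller set $\mcH$ need not minimize over the larger set $U$, and \cref{thm:MinimizerCoincide}~(i) cannot be applied until $\argmin_{u\in U}\mcJ^h(u)$ is known to be nonempty. The Hermite interpolation argument is exactly what closes this gap, by showing that every value $\mcJ^h(u)$ with $u\in U$ is already realized by some network in $\mcH^M\subseteq\mcH$, so that no function in $U$ can beat $\hat u$. I expect this to be the main (and essentially the only) obstacle; the remainder is bookkeeping with the inclusions $\mcH^M\subseteq\mcH\subseteq U$.
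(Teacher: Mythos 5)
Your proof is correct, and it rests on the same key ingredient as the paper's: the Hermite interpolation result \cref{cor:pointwise-bc-vector} applied at the collocation points, together with the observation that $\mcJ^h$ only sees the derivative data matched by the interpolant. The difference is one of routing. You treat the passage from optimality over $\mcH$ to optimality over $U$ as the main obstacle, establish it by showing every value $\mcJ^h(u)$ with $u\in U$ is realized in $\mcH^M\subseteq\mcH$, and only then invoke \cref{thm:MinimizerCoincide}~(i). The paper bypasses $U$ entirely: it applies \cref{cor:pointwise-bc-vector} directly to the given $\mcH$-minimizer $\hat u$, obtaining $u_\theta\in\mcH^M\subseteq\mcH$ with $\mcJ^h(u_\theta)=\mcJ^h(\hat u)$, which is already a minimizer over $\mcH$ -- a one-line argument. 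Your detour does buy something extra (it shows $\hat u$ also minimizes $\mcJ^h$ over $U$, an implication consistent with \cref{fig:minimizer-structure}), but it also silently requires $\hat u\in U$ in order to write $\hat u\in\argmin_{u\in U}\mcJ^h(u)$; this holds under the smoothness of $\sigma$ assumed in \cref{thm:MinimizerCoincide}, yet the paper's direct interpolation of $\hat u$ needs only that the relevant derivatives of $\hat u$ exist at the finitely many collocation points, so it is marginally more economical and more robust.
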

\begin{proof}
Let $\hat{u}\in\mcH$ be a solution of \eqref{eq:discretePINN:plain}. By \cref{cor:pointwise-bc-vector} there exists a single-hidden layer neural network $u_\theta\in\mcH^M$ with $c\binom{\ell}{d}$ hidden units that interpolates $\hat{u}$ such that \eqref{Eq:HermiteEquations} holds. Hence $\mcJ^h(u_\theta)=\mcJ^h(\hat{u})$ and $u_\theta$ solves \eqref{eq:discretePINN:plain}. 
\end{proof}

Utilizing \cref{thm:MinimizerCoincide} we are able to show that if \eqref{PhysicalModel} has a solution in $U$, then also \eqref{eq:discretePINN:plain} has a solution.

\begin{theorem}
\label{thm:equivalent:solution}
Let the assumptions and notations of \cref{thm:MinimizerCoincide} hold.
If $\hat{u} \in U$ is a solution of~\eqref{PhysicalModel} and $\alpha_\mcD=0$, then there exists a one-hidden-layer neural network 
$u_\theta \in {\mcH}^M \subset {\mcH}$ such that $u_\theta$ solves \eqref{eq:discretePINN:plain} and $\mcJ^h(u_\theta) = 0$.
\end{theorem}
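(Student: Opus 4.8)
The plan is to exploit the fact that the discrete AD-PINN functional $\mcJ^h$ is nonnegative, that the given classical PDE solution $\hat u$ drives it to its global minimum value zero, and that the realizability result of \cref{thm:MinimizerCoincide}~(i) then transfers this minimizer into the finite-width depth-2 class $\mcH^M$. First I would record that, with $\alpha_\mcD = 0$, the functional \eqref{eq:discretePINN} reduces to
\[
\mcJ^h(u) = \alpha_{\mcF}\sum_{z\in\Omega^h} \omega_{\mcF}^z \bigl|\widehat{\mcF}(u)(z)\bigr|_\nu^\nu + \alpha_{\mcB}\sum_{z\in\Gamma^h} \omega_{\mcB}^z \bigl|\widehat{\mcB}(u)(z)\bigr|_\nu^\nu \ge 0
\]
for every $u\in U^h$, since it is a finite sum of nonnegative terms (the weights $\alpha_\mcF,\alpha_\mcB$ and quadrature weights $\omega_\mcF^z,\omega_\mcB^z$ are all nonnegative, and $|\cdot|_\nu^\nu\ge 0$). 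In particular $\mcJ^h\ge 0$ on all of $\mcH\subset U^h$.

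Next I would use that $\hat u\in U$ solves \eqref{PhysicalModel}, so $\mcF(\hat u)(z)=0$ for every $z\in\Omega\supseteq\Omega^h$ and $\mcB(\hat u)(z)=0$ for every $z\in\Gamma\supseteq\Gamma^h$. Because $\hat u\in U$ has the required classical regularity, the extended operators agree with the originals at the collocation points, i.e.\ $\widehat{\mcF}(\hat u)(z)=\mcF(\hat u)(z)=0$ on $\Omega^h$ and $\widehat{\mcB}(\hat u)(z)=\mcB(\hat u)(z)=0$ on $\Gamma^h$. Substituting these into the displayed expression gives $\mcJ^h(\hat u)=0$. Combined with $\mcJ^h\ge 0$, this shows that $\hat u$ attains the global minimum over $U$, so $\hat u\in\argmin_{u\in U}\mcJ^h(u)$.

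I would then invoke \cref{thm:MinimizerCoincide}~(i): since $\hat u$ minimizes $\mcJ^h$ over $U$, there exists a one-hidden-layer network $u_\theta\in\mcH^M$ with $\mcJ^h(u_\theta)=\mcJ^h(\hat u)=0$ (and $u_\theta=\hat u$ on $\Omega^h\cup\Gamma^h$). Finally, because $\mcH^M\subset\mcH$ and $\mcJ^h\ge 0$ on all of $\mcH$, the element $u_\theta\in\mcH$ that attains the value zero is automatically a global minimizer over $\mcH$; hence $u_\theta$ solves \eqref{eq:discretePINN:plain} with $\mcJ^h(u_\theta)=0$, as claimed.

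I do not anticipate a genuine obstacle, since the argument is essentially a direct corollary of \cref{thm:MinimizerCoincide}. The only two points requiring care are the identity $\widehat{\mcF}(\hat u)=\mcF(\hat u)$ (and likewise for $\mcB$) at the collocation points, which holds precisely because $\hat u\in U$ carries the classical regularity that makes the hatted and original operators coincide there, and the observation that zero is \emph{simultaneously} the global minimum over $U$ and over $\mcH$. The latter is what lets me conclude optimality over the full class $\mcH$ without any separate argument comparing $u_\theta$ against competitors in $\mcH\setminus\mcH^M$: no function in $\mcH$ can achieve a loss below the nonnegative floor that $u_\theta$ already realizes.
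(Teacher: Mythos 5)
Your proof is correct and follows essentially the same route as the paper: show $\mcJ^h(\hat u)=0$ from the pointwise vanishing of the residuals at the collocation points, conclude $\hat u\in\argmin_{u\in U}\mcJ^h(u)$ by nonnegativity, and invoke \cref{thm:MinimizerCoincide}~(i) to realize a zero-loss minimizer in $\mcH^M$. Your explicit final step that the nonnegative floor makes $u_\theta$ a global minimizer over all of $\mcH$ (not just $\mcH^M$) is a small clarification the paper leaves implicit, but the argument is the same.
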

\begin{proof}
For a solution $\hat{u}\in U$ of \eqref{PhysicalModel} we have $\mcJ(\hat{u})=0$ and consequently $\hat{u}\in\argmin_{u\in U} \mcJ(u)$. Since $\alpha_\mcD =0$, we even obtain that $\mcJ(\hat{u}) = \mcJ^h(\hat{u})$ and hence $\hat{u}\in\argmin_{u\in U}\mcJ^h(u)$. \Cref{thm:MinimizerCoincide}~(i) implies then the existence of a $u_\theta\in\mcH^M \subset \mcH$ such that $u_\theta\in\argmin_{u\in \mcH^M} \mcJ^h(u)$ and $\mcJ^h(u_\theta) = \mcJ^h(\hat{u})=0$.
\end{proof}

\begin{figure}
\begin{center}
\begin{tikzpicture}[
    >=stealth,
    node distance=2.5cm,
    formula/.style={draw, rounded corners, inner sep=4pt}
]
\node[formula, align=center] (f0) at (0, 3.2)  {\footnotesize{$\hat{u}\in U$ solves \eqref{PhysicalModel}}}; 
\node[formula, align=center] (f1) at (0, 1.5)  {\footnotesize{$\hat{u}\in \argmin_{u\in U} \mcJ(u)$}\\[1pt] \footnotesize{with $\mcJ(\hat{u})=0$}}; 
\node[formula, align=center] (f2) at (8, 1.5)  {\footnotesize{$\hat{u}\in \argmin_{u\in \mcH} \mcJ(u)$}\\[1pt] \footnotesize{with $\mcJ(\hat{u})=0$}}; 
\node[formula, align=center] (f3) at (0,-1.5)  {\footnotesize{$\hat{u}\in \argmin_{u\in U} \mcJ^h(u)$}\\[1pt] \footnotesize{with $\alpha_\mcD=0$}}; 
\node[formula, align=center] (f4) at (8,-1.5)  {\footnotesize{$\hat{u}\in \argmin_{u\in \mcH} \mcJ^h(u)$}\\[1pt] \footnotesize{with $\alpha_\mcD=0$}}; 

\draw[implies-,double equal sign distance, thick,shorten >=2pt,shorten <=2pt] (f1) -- node[midway, above=-1pt]{\footnotesize{$\mcH \subseteq U$}} (f2);
\draw[implies-,double equal sign distance, thick,shorten >=2pt,shorten <=2pt] (f3) -- node[midway, above=-1pt]{\footnotesize{\cref{thm:MinimizerCoincide}~(ii)}}(f4);
\draw[->,thick,shorten >=2pt,shorten <=2pt]
  ([yshift=-5pt]f3.east)
    -- node[midway, below=-1pt] {\footnotesize{$\exists \hat{u} \in \mcH$ (\cref{thm:MinimizerCoincide}~(i))}}
  ([yshift=-5pt]f4.west);
  
\draw[implies-implies,double equal sign distance, thick,shorten >=2pt,shorten <=2pt] (f0) -- (f1);
\draw[-implies,double equal sign distance, thick,shorten >=2pt,shorten <=2pt] (f1) -- node[midway, above=-1pt, sloped]{\footnotesize{$\mcJ^h(\hat{u})=0$}} (f3);
\draw[-implies,double equal sign distance, thick,shorten >=2pt,shorten <=2pt] (f2) -- node[midway, above=-1pt, sloped]{\footnotesize{$\mcJ^h(\hat{u})=0$}}(f4);

\draw[->, thick, shorten >=2pt,shorten <=2pt] (f1) -- node[midway, above=-1pt, sloped] {\footnotesize{$\exists \hat{u}\in\mcH$}} node[midway, below=-1pt, sloped] {\footnotesize{\cref{thm:equivalent:solution}}}(f4);
\end{tikzpicture}
\end{center}
\caption{Schematic overview of the four minimization problems associated with the continuous and discrete functionals $\mcJ$ and $\mcJ^h$, posed over the full space $U$ or over the neural network class $\mcH$, and their relation to the underlying PDE. Double arrows ($\Rightarrow$) indicate logical implications, either holding by definition or proved in \cref{thm:MinimizerCoincide}. Single arrows ($\to$) denote the existence-type relations established in \cref{thm:MinimizerCoincide,thm:equivalent:solution}.}
\label{fig:minimizer-structure}

\end{figure}
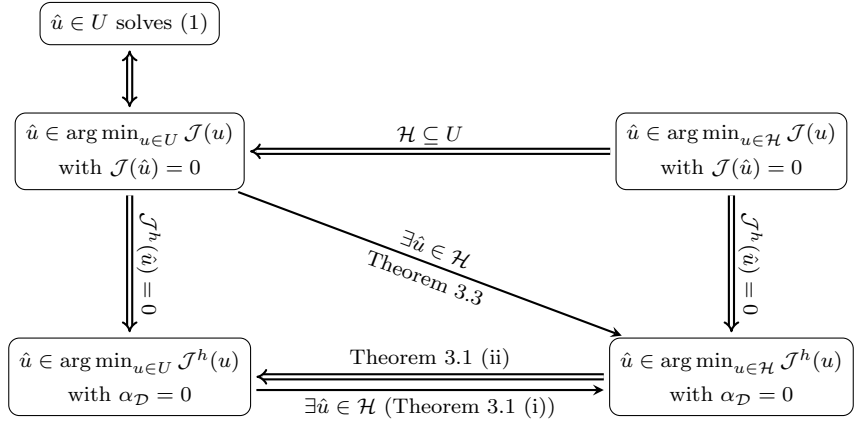

\Cref{fig:minimizer-structure} summarizes the logical relations among the key objects in the AD-PINN formulation:
the continuous PDE \eqref{PhysicalModel}, the continuous loss functional $\mcJ$, its discrete counterpart $\mcJ^h$, and their restrictions to the neural network class $\mcH$.
The schematic visualizes exactly the implication structure proved in \cref{thm:MinimizerCoincide,thm:equivalent:solution} (in the regime $\alpha_\mcD=0$), together with the straightforward inclusions that follow directly from the definitions.
It therefore provides a compact overview of how solutions of the PDE relate to minimizers of the continuous PINN and AD-PINN objectives, and how these minimizers behave when the solution space is restricted from the full function space $U$ to the neural network class $\mcH$.

While \cref{thm:MinimizerCoincide,cor:solution:ADPINN} hold for all choices of $\alpha_\mcD\ge 0$, the situation is different for \cref{thm:equivalent:solution}. The reason is that if the observed data are noisy, the exact solution $\hat{u}\in U$ of \eqref{PhysicalModel} will in general not minimize $\mcJ^h$ over $U$, and therefore the conclusion of \cref{thm:equivalent:solution} need not hold when $\alpha_\mcD>0$.
However, in the noise free case, that is, when 
$u^*(z)=\hat{u}(z)$ for all $z\in\mcD$, the last term in \eqref{eq:discretePINN} vanishes at $\hat{u}$. Consequently, \cref{thm:equivalent:solution} remains valid also for $\alpha_\mcD > 0$ in this setting.

For \eqref{eq:discretePINN:c} and \eqref{eq:discretePINN:rigid}, the existence of a solution follows by the same argument as for \eqref{eq:continuousPinn:c} and \eqref{eq:continuousPinn:ridged}, provided that $\mcF$, $\mcB$, and $\sigma$ are continuous. 
However, analogues of \cref{thm:MinimizerCoincide,thm:equivalent:solution} appear more challenging for \eqref{eq:discretePINN:c} and \eqref{eq:discretePINN:rigid}, as one must carefully handle the bound $c_\theta$ and the penalization term $\alpha_\theta |\theta|_q$, both of which influence the magnitude of the weights and biases and hence the solutions. A systematic treatment of these cases seems more difficult and is left for future work.

\subsection{Non-uniqueness of Minimizers}

While under certain assumptions the existence of a solution of \eqref{eq:discretePINN:plain} can be shown, see \cref{thm:MinimizerCoincide}~(i) and \cref{thm:equivalent:solution}, and for \eqref{eq:discretePINN:c} and \eqref{eq:discretePINN:rigid} under mild continuity assumptions, the question of uniqueness is far more delicate. 

 We present a simple example illustrating that even if \eqref{PhysicalModel} has a unique solution the respective AD-PINN optimization problem \eqref{eq:discretePINN:plain} does not necessarily have a unique solution, but infinitely many.

\begin{example}\label{Example:PINN}
Consider the 1D differential equation
\begin{equation}\label{eq:PINN:counterexample1}
\begin{split}
&u'(z) = a \in\R \quad \text{for }z\in(0,T)\subset \R,\\
&u(0) = u_0\in\R.
\end{split}
\end{equation}
The unique analytic solution of \eqref{eq:PINN:counterexample1} is given by $u(z) = a z + u_0$. For simplicity, we consider \eqref{eq:PINN:counterexample1} in an AD-PINN framework using ReLU-NNs restricted to the regularity class $\mcH_{\textrm{reg}}$.
Then one solves
\begin{equation}\label{eq:PINN:counterexample2}
\min_{u_\theta\in\mcH_{\textrm{reg}}} \frac{1}{N} \sum_{i=1}^N |u_\theta'(z_i) - a|^\nu + |u_\theta(0)-u_0|^\nu
\end{equation}
where $\{z_i\}_{i=1}^N\subset (0,T)$ are collocation points. Note that, since the solution $u$ of \eqref{eq:PINN:counterexample1} is an affine function, a ReLU-NN can exactly represent it. In particular, for any minimiser in this class $\mcH_{\textrm{reg}}$, the derivative $u_\theta'(z_i)$ is well-defined for all $i\in\{1,\ldots,N\}$.

Moreover, any minimiser $u_\theta\in\mcH_{\textrm{reg}}$ satisfies
\[
u_\theta(0)=u_0,
\qquad
u_\theta'(z_i)=a \quad \text{for all } i=1,\dots,N.
\]
In particular, the loss does not constrain the values $u_\theta(z_i)$ for all $i\in\{1,\ldots,N\}$ themselves, nor the behaviour of $u_\theta$ between the collocation points. Hence one can construct infinitely many minimisers of the form
\[
u_\theta(z) =
\begin{cases}
u_0 & \text{if } z = 0,\\
a z + \xi_i & \text{if } z = z_i,\ i\in\{1,\ldots,N\},\\
g(z) & \text{otherwise,}
\end{cases}
\]
where $\xi_i\in\R$ and $g:\R\to\R$ is a continuous and piecewise affine function, so that $u_\theta$ is continuous and piecewise affine.
Since each $\xi_i$ for $i\in\{1,\ldots,N\}$ and $g$ can be chosen arbitrarily within these constraints, there are infinitely many distinct minimisers.

If in \eqref{eq:PINN:counterexample2} the solution space $\mcH_{\textrm{reg}}$ is replaced by $\mcH^M_{\textrm{reg}}$ the problem persists, assuming that $M$ is sufficiently large. Assume $a>0$ and $u_0\ge 0$. Then $u_\theta(z) = \sigma(w_{1} z + u_0) + \sigma(w_{2} z - b)$ with $w_1,w_2 >0$, $w_1 + w_2 = a$, and $b \ge 0$ solves \eqref{eq:PINN:counterexample2} as long as $b < z_1 w_2$. In fact $u_\theta(0) = u_0$ and $u_\theta(z_i) = w_{1} z_i + u_0 + w_{2} z_i - b = a z_i + u_0 - b$ for $i\in\{1,\ldots,N\}$. Since this is a solution for any $b\in [0,z_1w_2)$, there are infinitely many minimizers and $M=7$ (1 hidden layer with 2 neurons) is already sufficiently large here.

For $M=4$ (1 hidden layer with a single neuron), different weight-bias configurations yield the same solution $u(z)=az+u_0$ for \nnew{$z\in[0,z_N]$, while possibly differing outside this range. Hence even in this setting the solution is not unique.} 
\nnew{In fact, $u_\theta(z) = -\sigma(- a z + b) + u_0+b$ is a solution of \eqref{eq:PINN:counterexample2} for any $b>a z_N$.
For $M=2$ (no hidden layer) one obtains a unique solution, namely $u_\theta(z) = wz +b$ with $w=a$ and $b=u_0$. Any other weight-bias configuration would yield a different function.}
\end{example}

This example illustrates a crucial issue of AD-PINNs. 
Namely, formulating the original differential equation \eqref{PhysicalModel} as an optimization problem in the form \eqref{eq:discretePINN:plain}  may render the solution not unique, even if the original problem \eqref{PhysicalModel} possesses exactly one solution in $\mcH$. 
This non-uniqueness originates from the fact that AD-PINNs only enforce the respective PDE in a finite number of collocation points allowing the solution to be arbitrary elsewhere, as illustrated in \cref{Example:PINN}. 
In particular this behavior may lead to the problem having an infinite number of solutions. It is then unclear which of these solutions is found by an optimization algorithm and it seems difficult to guarantee that the desired solution is found.

We are aware that for AD-PINNs, $\tanh$ is typically used as the activation function, since it is infinitely differentiable and therefore allows one to represent solutions that possess higher-order derivatives, as is the case for higher-order partial differential equations.
 However, the issue preserves and examples similar to \cref{Example:PINN} can be constructed. In fact, motivated by the above example we have the following general non-uniqueness result for AD-PINNs.

\begin{theorem}[Non-uniqueness of AD-PINN minimizers]\label{thm:nonuniq}
Fix finite collocation sets $\Omega^h=\{z_\mcF^i\}_{i=1}^{N_\mcF} \subset \Omega$ and $\Gamma^h=\{z_\mcB^j\}_{j=1}^{N_\mcB} \subset \Gamma$. 
Let $\mcH\subset U^h$ be a class of depth-$L$ neural networks satisfying
one of the following:
\begin{enumerate}[(i)]
\item \text{ReLU-NNs:} $\mcH = \mcH_{\mathrm{reg}}$ consists of depth-$L$ neural networks with ReLU activation functions and $L \ge \lceil \log_2(d+1)\rceil +1$.

\item \text{Smooth-activation neural networks:} 
The activation function $\sigma\in C^{\ell}(\R,\R)$ with 
\(
\ell := N_{\mcF}(r_{\mcF}+1)
      + N_{\mcB}(r_{\mcB}+1),
\)
satisfies $\sigma^{(k)}(a)\neq 0$ for some $a\in\R$ and all 
$0\le k \le \ell$, and the neural network depth satisfies $L\ge 2$.  
If $L>2$, we additionally assume that $\sigma$ is strictly monotone.
\end{enumerate}
If $\argmin_{u\in \mcH} \mcJ^h(u)$ has a solution, then it has infinitely many solutions in $\mcH$.
\end{theorem}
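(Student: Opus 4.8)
The plan is to exploit the fact that $\mcJ^h$ depends on a candidate $u$ only through its \emph{jet data} at the collocation points, i.e.\ the finite collection $J(u):=\bigl((D^\beta u(z))_{|\beta|\le r_\mcF,\,z\in\Omega^h},\,(D^\beta u(z))_{|\beta|\le r_\mcB,\,z\in\Gamma^h}\bigr)$ (the data term is subsumed, since $\mcD^h\subseteq\Omega^h\cup\Gamma^h$ and function values are the $\beta=0$ entries). Indeed, by \eqref{eq:F}--\eqref{eq:B} the residuals $\widehat{\mcF}(u)(z)$ and $\widehat{\mcB}(u)(z)$ are fixed functions $F,B$ of this jet data, so any two admissible functions with identical jet data yield the same loss. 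Hence, given a minimizer $\hat u\in\mcH$, it suffices to produce a single $\psi\in\mcH$ with $\psi\not\equiv0$ and vanishing jet data $J(\psi)=0$: since $\mcH$ is closed under linear combinations (\cref{sec:closure}) and the jet map $u\mapsto J(u)$ is linear wherever the derivatives exist, the family $u_t:=\hat u+t\psi$, $t\in\R$, lies in $\mcH$, satisfies $J(u_t)=J(\hat u)$ and therefore $\mcJ^h(u_t)=\mcJ^h(\hat u)$, and consists of pairwise distinct functions. This already gives infinitely many minimizers, so the entire theorem reduces to constructing $\psi$ in each of the two settings.

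For the smooth-activation case (ii) I would reduce to one dimension. Choosing a direction $v\in\R^d$ whose projections $v\cdot z$ separate the finitely many collocation points, any $\psi(x)=\phi(v\cdot x)\,e_1$ with $\phi:\R\to\R$ satisfies $D^\beta\psi(z)=v^\beta\phi^{(|\beta|)}(v\cdot z)$, so $J(\psi)=0$ is implied by the purely one-dimensional conditions $\phi^{(j)}(v\cdot z)=0$ for $0\le j\le r_\mcF$ (resp.\ $r_\mcB$) at each projected point, a total of exactly $\ell=N_\mcF(r_\mcF+1)+N_\mcB(r_\mcB+1)$ homogeneous linear constraints. Writing $\phi(t)=\sum_{k=1}^m c_k\,\sigma(\omega_k t+\beta_k)$ --- realizable in one hidden layer since $\sigma(\omega_k(v\cdot x)+\beta_k)=\sigma((\omega_k v)\cdot x+\beta_k)$ --- these constraints are linear and homogeneous in $(c_k)$, so taking width $m>\ell$ (admissible, as $\mcH$ permits arbitrary widths) forces a nontrivial kernel; the assumption $\sigma^{(k)}(a)\neq0$ for $0\le k\le\ell$ is exactly what makes the associated confluent Vandermonde/Hermite system attain its full rank $\ell$, and choosing the first-layer parameters so that the shifted activations are linearly independent, every nonzero kernel vector then yields $\phi\not\equiv0$. (Equivalently, one may invoke \cref{cor:pointwise-bc-vector} with zero jet data plus a single normalization $\phi(t_0)=1$ at a non-collocation point.) For $L=2$ this $\psi$ is already admissible.

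For the ReLU case (i) I would argue geometrically. A network in $\mcH_{\mathrm{reg}}$ is continuous and piecewise affine, so at a non-kink collocation point its jet reduces to value and gradient. It therefore suffices to build a continuous piecewise-affine ``bump'' $\psi$ that vanishes identically on a neighborhood of every collocation point (forcing $J(\psi)=0$ there and preserving regularity, since $\hat u+t\psi\equiv\hat u$ near each $z$) while being nonzero at its peak. Such a localized bump is, up to scaling, the positive part of a minimum of $d+1$ affine functions cutting out a small simplex disjoint from the finite collocation set; computing the minimum of $d+1$ affine maps through a binary tree of ReLU gates together with the final rectification requires depth $\lceil\log_2(d+1)\rceil+1$, which is precisely the stated hypothesis. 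Embedding shallower ReLU blocks into depth $L$ via $x=\max\{0,x\}-\max\{0,-x\}$ and invoking closure then places $\hat u+t\psi$ in $\mcH_{\mathrm{reg}}$.

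The main obstacle is the smooth case with $L>2$, where $\psi$ must be realized \emph{exactly} as a depth-$L$ network: approximate identity layers do not suffice, since any deviation in $J(\psi)$ destroys the equality $\mcJ^h(u_t)=\mcJ^h(\hat u)$. My plan is to fix the first $L-1$ layers (again routing the input through a single direction $v$ to remain one-dimensional) and solve only for the output layer: the conditions $J(\psi)=0$ are linear in the final weights, so a last hidden layer of width $m>\ell$ yields a nontrivial solution, and $\psi\not\equiv0$ follows once the depth-$(L-1)$ feature components are linearly independent as functions. The crux --- and where strict monotonicity of $\sigma$ enters --- is guaranteeing this nondegeneracy through several nonlinear layers: strict monotonicity forces each layer to act injectively, preventing the features from collapsing, while $\sigma^{(k)}(a)\neq0$ supplies the Wronskian/Taylor nonvanishing at a base point $a$ needed to certify both the independence of the composed features and the expected rank of the jet-evaluation system. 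Establishing this linear independence of deeply composed smooth features is the technical heart of the argument; the rest is bookkeeping with the reductions above.
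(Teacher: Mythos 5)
Your overall architecture matches the paper's: reduce the theorem to constructing a single nontrivial $\Phi\in\mcH$ whose jet data vanish at all collocation points, then use locality of $\widehat{\mcF},\widehat{\mcB}$ and closure of $\mcH$ under linear combinations (\cref{sec:closure}) to obtain the affine line $\hat u+\lambda\Phi$ of minimizers. Your ReLU construction (a piecewise-affine bump supported away from neighborhoods of the collocation points, realized at depth $\lceil\log_2(d+1)\rceil+1$) and your depth-$2$ smooth construction (project onto a separating direction, solve the resulting one-dimensional Hermite system, which is exactly \cref{cor:pointwise-bc-vector} / \cref{Lem:Null:Interpolation}) are essentially the paper's \cref{Lem:ReLU:Null:interpolation,Lem:Null:Interpolation}.

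The genuine gap is the smooth case with $L>2$, which you correctly identify as the hard point but do not close. Your plan -- fix the first $L-1$ layers and solve the jet conditions in the output weights -- hinges on the linear independence of the deeply composed hidden features and on controlling the rank of the jet-evaluation system through several nonlinear layers; you explicitly leave this ``technical heart'' unestablished, and it is not a routine verification (injectivity of each layer, which is all strict monotonicity directly gives, does not by itself yield linear independence of the composed feature functions, nor the claimed Wronskian nonvanishing after composition). The paper avoids this entirely by turning the construction inside out: it performs the Hermite interpolation once, in the \emph{first} hidden layer, producing a depth-$2$ scalar network $\Psi$ with vanishing jets at the collocation points and $\Psi(z_0)=1$, and then post-composes with $L-2$ copies of the scalar map $g(t)=\sigma(at)-\sigma(0)$. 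Since $g(0)=0$, the Fa\`a di Bruno formula shows that all derivatives of $g\circ\cdots\circ g\circ\Psi$ up to the required orders still vanish at the collocation points (every term contains a factor $D^{\gamma}\Psi(z)=0$ with $|\gamma|\ge 1$, and the zeroth-order value is $g(\cdots g(0))=0$), while strict monotonicity guarantees that $0$ is the \emph{only} root of $g$, so $g\circ\cdots\circ g(1)\neq 0$ and the composed network remains nonzero at $z_0$ after rescaling the output. You should replace your output-layer argument for $L>2$ with this composition trick (or supply the missing independence proof); as written, that branch of your proof is incomplete.
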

\begin{proof}
We start by showing that there exists a $\Phi\in\mcH$ such that 
\begin{equation}\label{eq:NullInterpolation}
\begin{split}
D^\beta \Phi(z_\mcF^i)=0 \qquad \text{for all } i=1,\ldots,N_\mcF, \ |\beta|\le r_{\mcF},\\
D^\gamma \Phi(z_\mcB^j)=0 \qquad \text{for all } j=1,\ldots,N_\mcB, \ |\gamma|\le r_{\mcB}.
\end{split}
\end{equation}
\begin{enumerate}[(i)]
\item Let $\mcH=\mcH_{\mathrm{reg}}$ be a class of ReLU-NNs. 
Then applying Lemma~\ref{Lem:ReLU:Null:interpolation} with $v\in\R^{c}\setminus\{0\}$ and $z_0\in {\Omega}\setminus\left(\Omega^h \cup \Gamma^h\right)$ yields the existence of a $\Phi\in\mcH$ of depth $L \geq \lceil \log_2(d+1)  \rceil + 1$ with $\Phi\equiv 0$ on an open neighborhood of each $z_\mcF^i$ and $z_\mcB^j$, and $\Phi\not\equiv 0$ on $\Omega\cup\Gamma$. In fact $\Phi(z_0)=v$. 
Hence all classical derivatives at $z_\mcF^i$ and all boundary traces at $z_\mcB^j$ vanish (in fact on neighborhoods). 

\item Let $\mcH$ be a class of neural networks with activation functions $\sigma\in C^{\ell}(\R, \R)$, where $\ell:= N_\mcF (r_\mcF+1) + N_\mcB (r_\mcB+1)$, and $\sigma^{(k)}(a)\neq 0$ for $0\le k\le \ell$ and some $a\in\R$. 
If $L>2$ then $\sigma$ is also strictly monotone.
Choose a $v\in\R^{c}\setminus\{0\}$, $z_0\in {\Omega}\setminus\left(\Omega^h \cup \Gamma^h\right)$ and a $v_*\in\R^d$ such that the projection $v_*\!\cdot z_\mcF^i$, $v_*\!\cdot z_\mcB^j$, and $v_*\!\cdot z_0$ are pairwise distinct for $i=1,\ldots,N_\mcF$, $j=1, \ldots,N_\mcB$ (see \cref{rem:projection-direction}). 
Then \cref{Lem:Null:Interpolation} yields the existence of a depth-$L$ neural network $\Phi\in\mcH$ with $L\geq 2$ such that $\Phi\not\equiv 0$ on $\Omega\cup \Gamma$ and has the desired properties \eqref{eq:NullInterpolation}. 
\end{enumerate}
Let $\hat{u}\in\mcH$ be a solution of $\argmin_{u\in \mcH} \mcJ^h(u)$. Then for any $\lambda\in\R$, $(\hat{u} + \lambda \Phi)(z) - u^*(z) = \hat{u}(z) - u^*(z)$ for all $z\in \mcD^h$ and by locality
$\mcF(\hat{u} + \lambda \Phi)(\nnew{z}) = \mcF(\hat{u})(\nnew{z})$ for all $\nnew{z}\in \Omega^h$  and $\mcB(\hat{u} + \lambda \Phi)(\nnew{z}) = \mcB(\hat{u})(\nnew{z})$ for all $\nnew{z}\in\Gamma^h$. This yields $\mcJ^h(\hat{u} + \lambda \Phi) = \mcJ^h(\hat{u})$ for all $\lambda\in\R$. 
Since $\hat{u},\Phi\in\mcH$ and $\mcH$ is closed under finite linear combinations, see \cref{sec:closure}, we obtain that $\hat{u} + \lambda \Phi \in \mcH$ for any $\lambda\in\R$ yielding infinitely many solutions of $\argmin_{u\in \mcH} \mcJ^h(u)$ in $\mcH$. 
\end{proof}

\begin{remark}
\begin{enumerate}[(a)]
\item We emphasize that the classes $\mcH$ in \cref{thm:nonuniq} do not set any width limitations on a neural network. This is essential as it yields the closure of $\mcH$ and $\mcH_{\mathrm{reg}}$ under finite linear combinations and allows to construct a neural network $\Phi\in\mcH$ with the desired interpolation properties \nnew{such that, for any $u\in\mcH$ and $\lambda\in\R$, the perturbed network $u+\lambda\Phi$ again belongs to $\mcH$}. Of course, the resulting neural network is of finite width. 
Hence the result persists for the class $\mcH^M$, if $M$ is sufficiently large. In practice, one usually chooses a large $M$ such that the approximation capabilities of the class $\mcH^M$ are high. However, if $M$ would be small, then \cref{thm:nonuniq} could break as we see in \cref{Example:PINN}, e.g., when \nnew{$M=2$ leading to a no-hidden-layer neural network}.

\item 
In the proof, utilizing \cref{Lem:ReLU:Null:interpolation,Lem:Null:Interpolation}, we enforced full Hermite interpolation conditions (all derivatives up to a certain order) on $\Omega^h\cup \Gamma^h$, which is stronger than necessary. It suffices to impose conditions only on the derivative orders at the respective points that actually appear in $\mcJ^h$ (including order 0).
\item Note that \cref{thm:nonuniq} holds for any values $\alpha_\mcF, \alpha_\mcB, \alpha_\mcD, \omega_\mcF^z, \omega_\mcB^z, \omega_\mcD^z \in \R$ as long as $\mcJ^h$ has a minimizer in $\mcH$.
\item In AD-PINNs, the activation functions are typically chosen to be smooth, nonlinear, and sufficiently differentiable, since the governing PDEs may involve higher-order derivatives. Nevertheless, in \cref{thm:nonuniq} we also consider ReLU activation functions, as they can be a reasonable choice for first-order PDEs; see, e.g., \cref{Example:PINN}. 
\end{enumerate}
\end{remark}

\begin{remark}\label{rem:overfitting}
\nnew{Assume $u_\theta \in \argmin_{u\in \mcH} \mcJ^h(u)$.} In the proof of \cref{thm:nonuniq} we constructed a nontrivial function $\Phi\in\mcH$, vanishing (together with all derivatives required by the PDE and boundary operators) at all interior and boundary collocation points, such that $u_\theta + \lambda \Phi \in \mcH$ is a minimizer of $\mcJ^h$ over $\mcH$ for every $\lambda\in\R$. 
In particular, the set of minimizers contains an unbounded affine line $\{u_\theta + \lambda\Phi : \lambda\in\R\}$, illustrating the severe non-uniqueness of the problem.
Let $\hat{u}\in U$ be a solution of the continuous PDE \eqref{PhysicalModel}. 
For any $\nu\in[1,\infty)$, the triangle inequality yields
\[
\|u_\theta + \lambda\Phi - \hat{u}\|_{L^\nu(\Omega)}
\;\ge\;
|\lambda|\|\Phi\|_{L^\nu(\Omega)}
\;-\;
\|u_\theta - \hat{u}\|_{L^\nu(\Omega)}
\longrightarrow \infty 
\qquad\text{as }\nnew{|}\lambda\nnew{|}\to\infty. 
\]
Thus, minimizers of the AD-PINN loss can diverge arbitrarily far from a true PDE solution. 
This structural non-uniqueness provides a mechanism that is closely related to the  ``overfitting'' effect observed in \cite{DoBiBo:23} for the heat equation, but it 
holds for general AD-PINN formulations. 
\end{remark}

\Cref{thm:nonuniq} shows that the AD-PINN problem is indeed ill-posed, since \eqref{eq:discretePINN:plain} admits infinitely many distinct minimizers. 
It does not, however, cover the optimization problems \eqref{eq:discretePINN:c} and \eqref{eq:discretePINN:rigid}. As discussed earlier, analogous results for the constrained and regularized formulations are more delicate, since the bound $c_\theta$ and the penalty term $\alpha_\theta |\theta|_q$ directly affect the weights and biases of the minimizers.
In particular, it does not seem obvious whether the constraint or the penalization could restore uniqueness, and a rigorous analysis of this question would likely require techniques beyond the scope of the present work.

\section{FD-PINN Framework}\label{Sec:FDPINN}

Instead of directly using \eqref{PhysicalModel} in an optimization framework, which leads to \eqref{eq:discretePINN:plain}, \eqref{eq:discretePINN:c} or \eqref{eq:discretePINN:rigid}, one may instead first discretize \eqref{PhysicalModel} by finite differences and subsequently apply the PINN methodology. A discrete version of \eqref{PhysicalModel} writes as
\begin{equation}\label{PhysicalModel:Discrete}
\begin{split}
&\mcF_h(u(z)) = 0 \qquad z\in \Omega^h,\\
&\mcB_h(u(z)) = 0 \qquad z  \in \Gamma^h,
\end{split}
\end{equation}
where $\mcF_h$, $\mcB_h$, $\Omega^h \subset \Omega$ and $\Gamma^h \subset \Gamma$ are finite difference discretizations of $\mcF$, $\mcB$, $\Omega$ and $\Gamma$, respectively, such that $\Omega^h\cap \Gamma^h = \emptyset$. 
In the finite difference setting, the unknown $u \in \R^{N\times c}$ represents the discrete function values at the $N$ grid points of the stencil $\Omega^h \cup \Gamma^h$. Accordingly, $u(z)\in\R^c$ denotes the value of $u$ at the grid point $z$, that is, the components of $u$ corresponding to the spatial node $z$.

If some measurement data $u^*\in \mcD^h \subseteq \Omega^h \cup \Gamma^h$ are given, then one may consider the following optimization problem 
\begin{equation}\label{eq:FDM:opt}
\begin{split}
\min_{u\in \R^{N\times c}}\; \Bigg\{\mcJ_{\FD}(u):= \alpha_{\mcF}\sum_{z\in\Omega^h} \omega_{\mcF}^z |\mcF_h(u(z))|_\nu^\nu 
&+ \alpha_{\mcB}\sum_{z \in \Gamma^h} \omega_{\mcB}^z |\mcB_h(u(z))|_\nu^\nu \\
&+ \alpha_\mcD \sum_{z\in\mcD^h} \omega_{\mcD}^z |u(z) - u^*(z)|_\nu^\nu\Bigg\},
\end{split}
\end{equation}
where $\alpha_{\mcF},\alpha_{\mcB},\alpha_{\mcD}\geq 0$ and $\omega_{\mcF}^z,\omega_{\mcB}^z,\omega_{\mcD}^z$ are suitable quadrature weights,
to find an approximate solution of \eqref{PhysicalModel:Discrete}. 
Applying the PINN methodology on \eqref{PhysicalModel:Discrete} yields
\begin{align}\label{eq:fulldiscretePINN}
\begin{split}
\min_{u_\theta\in \mcH_{c_\theta}^M} \Bigg\{ \mcJ_{\theta}(u_\theta):= \alpha_{\mcF}\sum_{z\in\Omega^h} \omega_{\mcF}^z |\mcF_h(u_\theta(z))|_\nu^\nu 
&+ \alpha_{\mcB}\sum_{z \in \Gamma^h} \omega_{\mcB}^z |\mcB_h(u_\theta(z))|_\nu^\nu \\
&+ \alpha_\mcD \sum_{z\in\mcD^h} \omega_{\mcD}^z |u_\theta(z) - u^*(z)|_\nu^\nu \Bigg\},
\end{split}
\end{align}
which is called FD-PINN.

\subsection{Existence of Minimizers}

It is well-known that if $\mcJ_\FD$ and $\mcJ_\theta$ is lower semicontinuous and coercive then \eqref{eq:FDM:opt} and \eqref{eq:fulldiscretePINN} attain its minimum. 
In particular, thanks to the Weierstraß theorem, \eqref{eq:fulldiscretePINN} has a solution if $c_\theta, M <\infty$, rendering $\mcH_{c_\theta}^M$ compact, and if $\mcJ_\theta$ is lower semicontinuous.
Moreover, we have the following obvious results.
\begin{proposition}\label{prop:Equivalence:Solution:FD}
\begin{enumerate}[(i)]
\item If $u_h\in\R^{N\times c}$ is a solution of \eqref{PhysicalModel:Discrete}, then it also solves \eqref{eq:FDM:opt} with $\mcJ_{\FD}(u_h)=0$ provided that $\alpha_D = 0$ or $\alpha_D > 0$ and $u_h(z) = u^*(z)$ for all $z\in\mcD^h$.
\item 
If $u_h\in\arg\min_{u\in \R^{N\times c}}\mcJ_{\FD}(u)$ with $\mcJ_{\FD}(u_h)=0$, then $u_h \in\R^{N\times c}$ solves \eqref{PhysicalModel:Discrete}.
\end{enumerate}
\end{proposition}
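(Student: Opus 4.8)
The plan is to exploit the fact that $\mcJ_{\FD}$ is, by construction, a finite sum of nonnegative terms, so that $\mcJ_{\FD}(u)\ge 0$ for every $u\in\R^{N\times c}$ and the value $0$ is a global lower bound. Both directions then reduce to tracking when this lower bound is attained. First I would record the standing sign conventions that make the statement meaningful: the coefficients $\alpha_\mcF,\alpha_\mcB,\alpha_\mcD\ge 0$ and the quadrature weights $\omega_\mcF^z,\omega_\mcB^z,\omega_\mcD^z>0$ are the ``suitable'' positive weights, and each residual enters through $|\cdot|_\nu^\nu\ge 0$.

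For part (i), I would substitute a discrete solution $u_h$ of \eqref{PhysicalModel:Discrete} directly into $\mcJ_{\FD}$. By assumption $\mcF_h(u_h(z))=0$ for every $z\in\Omega^h$ and $\mcB_h(u_h(z))=0$ for every $z\in\Gamma^h$, so the interior and boundary sums vanish term by term. The data term is handled by the two cases in the hypothesis: if $\alpha_\mcD=0$ the term is absent, while if $\alpha_\mcD>0$ together with $u_h(z)=u^*(z)$ on $\mcD^h$ each summand $|u_h(z)-u^*(z)|_\nu^\nu$ is zero. In either case $\mcJ_{\FD}(u_h)=0$, and since $0$ is the global lower bound this already certifies that $u_h$ is a global minimizer of \eqref{eq:FDM:opt}; no separate existence argument is needed because we have exhibited a minimizer explicitly.

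For part (ii), I would argue in the reverse direction: given a minimizer $u_h$ with $\mcJ_{\FD}(u_h)=0$, the vanishing of a sum of nonnegative terms forces each term to vanish. Using $\alpha_\mcF>0$, $\alpha_\mcB>0$ and the positivity of the weights, this yields $|\mcF_h(u_h(z))|_\nu^\nu=0$ for all $z\in\Omega^h$ and $|\mcB_h(u_h(z))|_\nu^\nu=0$ for all $z\in\Gamma^h$, hence $\mcF_h(u_h(z))=0$ and $\mcB_h(u_h(z))=0$ pointwise on the stencil. These are exactly the equations \eqref{PhysicalModel:Discrete}, so $u_h$ solves the discrete PDE.

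The computation itself is routine; the only point requiring care, and thus the main (mild) obstacle, is the bookkeeping of positivity in part (ii). Vanishing of the weighted residual sums implies pointwise vanishing of the residuals only when the associated coefficients and weights are strictly positive: if, say, $\alpha_\mcF=0$, then $\mcJ_{\FD}(u_h)=0$ carries no information about the interior residual and the converse fails. I would therefore make explicit that part (ii) is understood under the standing assumption $\alpha_\mcF,\alpha_\mcB>0$ with positive quadrature weights, which is precisely the regime in which the FD-PINN loss genuinely penalizes the discretized PDE.
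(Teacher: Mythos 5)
Your proof is correct and follows essentially the same route as the paper's (one-line) argument: each summand of $\mcJ_{\FD}$ is nonnegative, so $\mcJ_{\FD}(u_h)=0$ holds if and only if every residual term vanishes pointwise on the stencil. Your explicit caveat that part (ii) requires $\alpha_{\mcF},\alpha_{\mcB}>0$ and strictly positive quadrature weights is a point the paper leaves implicit (it only assumes $\alpha_{\mcF},\alpha_{\mcB}\ge 0$ when introducing \eqref{eq:FDM:opt}), and it is a worthwhile clarification, since the converse direction genuinely fails without it.
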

\begin{proof}
The statements follow directly by noting that if $\mcJ_{\FD}(u_h) = 0$, then we have $\mcF_h(u_h(z))=0$ for all $z\in\Omega^h$ and $\mcB_h(u_h(z))=0$ for all $z\in\Gamma^h$ and conversely. 
\end{proof}

Thanks to \cite[Theorem 5.1]{Pinkus:95} we know that if $\sigma\in C(\R,\R)$ is a non-polynomial activation function, then for any finite set of distinct input points \(\{z_i\}_{i=1}^N\subset \R^d\) and corresponding target values  \(\{\zeta_i\}_{i=1}^N \subset \R^c \) with $N\in\N$, there exists a one-hidden-layer neural network $\Phi:\R^d\to\R^c$ with $cN$ hidden neurons which interpolates this data, i.e., such that $\Phi(z_i)=\zeta_i$ for all $i=1,\ldots,N$. Based on this result we are able to prove \cite[Theorem 4.9]{LangerBehnamian:24} in our setting.
\begin{proposition}[{\cite[Theorem 4.9]{LangerBehnamian:24}}]\label{prop:minimizer:equivalence}
Consider finite collocation sets $\Omega^h \subset{\Omega}$ and $\Gamma^h\subset \Gamma$ with $\Omega^h \cap \Gamma^h=\emptyset$ and set $N=|\Omega^h \cup \Gamma^h|$. 
Let $c_\theta=M=\infty$ and $\mcH$ be a set of depth-$L$ neural networks with either
\begin{enumerate}[(i)]
\item ReLU activation functions and $L\geq 2$, or
\item non-polynomial activation functions $\sigma\in C(\R,\R)$ and $L=2$, or 
\item activation $\sigma \in C^{\ell-d}(\R,\R)$, $\ell:= N d$, satisfying $\sigma^{(k)}(a) \neq 0$ for some $a\in\R$ and all $0 \le k \le \ell-d$, and $L=2$.
\end{enumerate}
Then we have that
\begin{enumerate}[(a)]
\item if $u_h \in \R^{N\times c}$ is a solution of \eqref{eq:FDM:opt}, then there exist $u_\theta \in \mcH$ minimizing \eqref{eq:fulldiscretePINN} with $\mcJ_{\FD} (u_{h}) = \mcJ_{\theta}(u_{\theta})$ and $u_h(z) = u_\theta(z)$ for all $z\in\Omega^h \cup \Gamma^h$, and 
\item if $u_\theta \in \mcH$ is a solution of \eqref{eq:fulldiscretePINN}, then there exist $u_h\in\R^{N\times c}$ minimizing \eqref{eq:FDM:opt} with $\mcJ_{\FD} (u_{h}) = \mcJ_{\theta}(u_{\theta})$ and $u_h(z) = u_\theta(z)$ for all $z\in\Omega^h \cup \Gamma^h$.
\end{enumerate}
\end{proposition}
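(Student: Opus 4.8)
The plan is to exploit the fact that both objectives are ultimately functions of the \emph{same} finite data, namely the grid values $\{u(z) : z \in \Omega^h \cup \Gamma^h\}$. First I would record the key structural observation: since $\mcF_h$ and $\mcB_h$ are finite-difference operators whose stencils involve only nodes of $\Omega^h \cup \Gamma^h$, and since the data term in \eqref{eq:fulldiscretePINN} also samples only at grid points, the value $\mcJ_\theta(u_\theta)$ depends on $u_\theta$ exclusively through its grid values. Consequently, whenever $u \in \R^{N\times c}$ and $u_\theta \in \mcH$ satisfy $u(z) = u_\theta(z)$ for all $z \in \Omega^h \cup \Gamma^h$, the two losses coincide: $\mcJ_{\FD}(u) = \mcJ_\theta(u_\theta)$. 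This is the essential contrast with the AD-PINN setting of \cref{thm:MinimizerCoincide}: because finite differences use only function \emph{values} at nodes, plain value interpolation suffices and no derivative matching is needed.

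The second ingredient I would establish is an interpolation statement: for any prescribed targets $\{\zeta_z\}_{z\in\Omega^h\cup\Gamma^h}\subset\R^c$, there exists $u_\theta\in\mcH$ with $u_\theta(z)=\zeta_z$ at every grid point. In case (ii) this is exactly \cite[Theorem 5.1]{Pinkus:95}, producing a one-hidden-layer network with $cN$ neurons. Case (i) follows by applying the same result to the continuous, non-polynomial ReLU activation at depth $2$ and then embedding the resulting network into depth $L\ge 2$ via the identity construction $x=\sigma(x)-\sigma(-x)$ recorded in \cref{Sec:NN}. Case (iii) follows from the Hermite-type interpolation lemma underlying \cref{cor:pointwise-bc-vector}, specialized to interpolation order $r=0$; here the choice $\ell=Nd$ and the nonvanishing condition $\sigma^{(k)}(a)\neq 0$ guarantee solvability of the associated interpolation system. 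Since $c_\theta=M=\infty$, the width $cN$ imposes no restriction, so $u_\theta\in\mcH$ in every case.

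With these two facts in hand, both directions reduce to a short contradiction argument. For part (a), given an FD-minimizer $u_h$, I would interpolate its grid values by some $u_\theta\in\mcH$, so that $\mcJ_\theta(u_\theta)=\mcJ_{\FD}(u_h)$; if any $\tilde u_\theta\in\mcH$ achieved a strictly smaller loss, restricting it to the grid would yield $\tilde u_h\in\R^{N\times c}$ with $\mcJ_{\FD}(\tilde u_h)=\mcJ_\theta(\tilde u_\theta)<\mcJ_{\FD}(u_h)$, contradicting the optimality of $u_h$. For part (b) the argument is symmetric: given an FD-PINN minimizer $u_\theta$, I would set $u_h(z):=u_\theta(z)$ on the grid, obtaining $\mcJ_{\FD}(u_h)=\mcJ_\theta(u_\theta)$; a hypothetical strictly better $\tilde u_h$ would, after interpolation by some $\tilde u_\theta\in\mcH$, contradict the optimality of $u_\theta$. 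In both cases the agreement $u_h(z)=u_\theta(z)$ on $\Omega^h\cup\Gamma^h$ and the equality of loss values hold by construction.

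The routine parts here are the objective-agreement identity and the two contradiction arguments, which are immediate once the value-only dependence of $\mcJ_{\FD}$ and $\mcJ_\theta$ is noted. The main obstacle I anticipate is making the interpolation step uniform across the three activation regimes: cases (i) and (ii) reduce cleanly to \cite[Theorem 5.1]{Pinkus:95}, but case (i) additionally requires the depth-$L$ embedding, since the Pinkus construction is intrinsically depth $2$, while case (iii) requires invoking the smooth Hermite lemma at order zero and checking that its nonvanishing-derivative hypothesis is exactly what the stated condition on $\sigma$ provides. These are the only points demanding care; the remainder of the proof is mechanical.
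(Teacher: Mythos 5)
Your proposal is correct and follows essentially the same route as the paper's proof: pointwise interpolation of grid values via \cite[Theorem 5.1]{Pinkus:95} for cases (i)--(ii) (with identity-layer embedding for ReLU depth $L>2$) and via \cref{cor:pointwise-bc-vector} with $r_\mcF=r_\mcB=0$ for case (iii), followed by the two symmetric contradiction arguments. Your explicit remark that both losses depend on their arguments only through the grid values is exactly the (implicit) mechanism the paper relies on.
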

\begin{proof}
\begin{enumerate}[(a)]
\item 
Let $u_{h} \in \R^{N\times c}$ be any minimizer of $\mcJ_{\FD}$. By \cite[Theorem 5.1]{Pinkus:95}, for (i) and (ii), and by \cref{cor:pointwise-bc-vector}  (with $r_\mcF=0=r_\mcB$), for (iii), there exists a one-hidden-layer neural network (i.e., $L=2$) $u_{\theta}$ such that $u_{h}(z) =  u_{\theta}(z)$ for all $z\in\Omega^h\cup \Gamma^h$. 
In the case of ReLU activations, to obtain a depth-$L$ neural network with $L>2$ we just insert identity layers, cf., \cref{Sec:NN}, that do not change the values for all $z\in\Omega^h\cup \Gamma^h$. For simplicity we call this neural network again $ u_{\theta}$.
Then we have that $u_{h}(z) = {u}_{\theta}(z)$ for all $z\in\Omega^h\cup \Gamma^h$ and $\mcJ_{\FD} (u_{h}) = \mcJ_{\theta}(u_{\theta})$. 
To show that $u_\theta$ is optimal, we assume that there exists a $\tilde{u}_\theta\in\mcH$ with $\tilde{u}_{\theta} \neq u_{\theta}$ such that $\mcJ_{\theta}(\tilde{u}_{\theta})< \mcJ_{\theta}(u_{\theta})$. 
Then we can define $\tilde{u}_{h}\in\R^{N\times c}$ such that $\tilde{u}_{h}(z) = \tilde{u}_{\theta}(z)$ for all $z\in\Omega^h\cup \Gamma^h$. This yields $  \mcJ_{\FD} (\tilde{u}_{h})= \mcJ_{\theta}(\tilde{u}_{\theta})< \mcJ_{\theta}(u_{\theta}) = \mcJ_{\FD} (u_{h})$, which is a contradiction to the optimality of $u_{h}$.

\item 
Let $u_\theta\in\mcH$ be a minimizer of $\mcJ_\theta$. We define $u_{h}\in\R^{N\times c}$ such that $u_{h}(z) = u_\theta(z)$ for all $z\in\Omega^h\cup\Gamma^h$. 
This implies that $\mcJ_\theta (u_\theta) = \mcJ_{\FD}(u_{h}) $. 
Assume that $u_{h}$ is not a minimizer of $\mcJ_{\FD}$, i.e., there is a $\tilde{u}_{h}\in\R^{N\times c}$ with $\tilde{u}_{h}\neq {u}_{h}$ such that $\mcJ_{\FD}(\tilde{u}_{h}) < \mcJ_{\FD}(u_{h})$. 
By the same arguments as above, we construct an interpolation $\tilde{u}_\theta\in \mcH$ of $\tilde{u}_{h}$ such that $\tilde{u}_{h} (z) = \tilde{u}_\theta(z)$ for all $z\in\Omega^h\cup\Gamma^h$. 
Consequently $\mcJ_\theta (\tilde{u}_\theta) = \mcJ_{\FD}(\tilde{u}_{h}) < \mcJ_{\FD}(u_{h}) = \mcJ_\theta (u_\theta)$, which is a contradiction to $u_{\theta}$ being a minimizer of $\mcJ_\theta$ and hence $u_{h}$ is indeed a minimizer of $\mcJ_{\FD}$.
\end{enumerate}
\end{proof}

We emphasize that \cref{prop:minimizer:equivalence} ensures that, whenever a minimizer exists, the discrete finite-difference formulation \eqref{eq:FDM:opt} and the FD-PINN formulation \eqref{eq:fulldiscretePINN} admit minimizers that agree pointwise on the stencil $\Omega^h \cup \Gamma^h$. 
This equivalence plays a central role in our analysis below: it allows us to transfer existence and non-uniqueness properties between the two discrete formulations and to interpret FD-PINNs as neural network parameterizations of classical finite-difference schemes. Some further remarks on \cref{prop:minimizer:equivalence} are in order.

\begin{remark}
\begin{enumerate}[(a)]
\item Since $\mcJ_\theta$ considers only the values of a neural network at the collocation points and not their derivatives, no regularity needs to be requested for the used neural networks. Hence the class of ReLU-NNs does not need to be restricted to $\mcH_{\mathrm{reg}}$ in \cref{prop:minimizer:equivalence}.

\item 
In contrast to \cref{thm:nonuniq}, the ReLU-NNs used in \cref{prop:minimizer:equivalence} may have arbitrary depth $L\geq 2$. 
This is because \cref{prop:minimizer:equivalence} requires only a pointwise interpolation neural network, which can always be realized by a shallow ReLU architecture.
By comparison, the construction in the proof of \cref{thm:nonuniq} requires a ReLU-NN that is identically zero on nontrivial open sets while also taking prescribed values at selected points.
Implementing such a function with ReLU-NNs relies on the construction developed in the proof of \cref{Lem:ReLU:Null:interpolation}, which can be realized by a depth-$L$
neural network satisfying $L\geq \lceil \log_2(d+1)  \rceil + 1$. 
Thus, the depth restriction in \cref{thm:nonuniq} is not an inherent limitation of ReLU-NNs, but simply a consequence of the specific ``zero on open sets'' construction used in that proof.

\item Dimensions of the neural networks for which \cref{prop:minimizer:equivalence} holds:
\begin{enumerate}[(i)]
\item ReLU activation: $d_0=d$, $d_1=N$, $d_i=2$ for $i=2,\ldots,L-1$, $d_L=c$;
\item Continuous and non-polynomial activation: $d_0=d$, $d_1=N$, $d_L=c$;
\item $C^{\ell-d}$ activation: $d_0=d$, $d_1=c\binom{\nnew{\ell}}{d}$, $d_L=c$.
\end{enumerate}
Hence the result of \cref{prop:minimizer:equivalence} holds also for $M<\infty$, chosen according to these dimensions.
\end{enumerate}

\end{remark}

While \eqref{eq:fulldiscretePINN} with $c_\theta=M=\infty$ does not have a solution in general, by \cref{prop:minimizer:equivalence} it has one if \eqref{eq:FDM:opt} attains its minimum. 

The schematic, shown in \cref{fig:minimizer-structure:FD}, summarizes the logical relations among the three objects at the core of this section: the discrete PDE \eqref{PhysicalModel:Discrete}, the finite-difference functional $\mcJ_{\FD}$, and the FD-PINN objective $\mcJ_\theta$.
The diagram visualizes exactly the implications proved in
\cref{prop:Equivalence:Solution:FD,prop:minimizer:equivalence} for the regime $\alpha_{\mcD}=0$ and zero-loss solutions, highlighting how discrete PDE solutions correspond to minimizers of both optimization problems.

\begin{figure}
\begin{center}
\begin{tikzpicture}[
    >=stealth,
    node distance=2.5cm,
    formula/.style={draw, rounded corners, inner sep=4pt}
]

\node[formula, align=center] (f0) at (0, 3.5)  {\footnotesize{$u_h\in \R^{N\times c}$ solves \eqref{PhysicalModel:Discrete}}}; 
\node[formula, align=center] (f1) at (0, 1.5)  {\footnotesize{$u_h\in \argmin\limits_{u\in \R^{N\times c}} \mcJ_{\FD}(u)$}\\[1pt] \footnotesize{with $\alpha_\mcD=0$} \\[-2pt] \footnotesize{and $\mcJ_{\FD}(u_h)=0$}}; 
\node[formula, align=center] (f2) at (8, 1.5)  {\footnotesize{$u_\theta \in \argmin\limits_{u\in \mcH} \mcJ_\theta(u)$}\\[1pt] \footnotesize{with $\alpha_\mcD=0$ }\\[-2pt] \footnotesize{and $\mcJ_{\theta}(u_\theta)=0$}}; 

\draw[->,thick,shorten >=2pt,shorten <=2pt]
  ([yshift=-3pt]f1.east)
    -- node[midway, below=-1pt] {\footnotesize{$\exists u_\theta \in \mcH$ (\cref{prop:minimizer:equivalence}~(i))}}
  ([yshift=-3pt]f2.west);
\draw[<-, thick,shorten >=2pt,shorten <=2pt] ([yshift=3pt]f1.east) -- node[midway, above=-1pt]{\footnotesize{$\exists u_h \in \R^{N\times c}$ (\cref{prop:minimizer:equivalence}~(ii))}} ([yshift=3pt]f2.west);

\draw[implies-implies,double equal sign distance, thick,shorten >=2pt,shorten <=2pt] (f0) -- (f1);
\end{tikzpicture}
\end{center}
\caption{Schematic overview of the two minimization problems for the discrete functionals $\mcJ_{\FD}$ and $\mcJ_\theta$ with the relation to the discrete PDE. Double arrows ($\Rightarrow$) indicate logical implications between the statements in the boxes established in \cref{prop:Equivalence:Solution:FD}. Single arrows ($\to$) represent the existence-type relations asserted in \cref{prop:minimizer:equivalence}, where a minimizer in one setting guarantees the existence of a corresponding minimizer in the other.}
\label{fig:minimizer-structure:FD}

\end{figure}
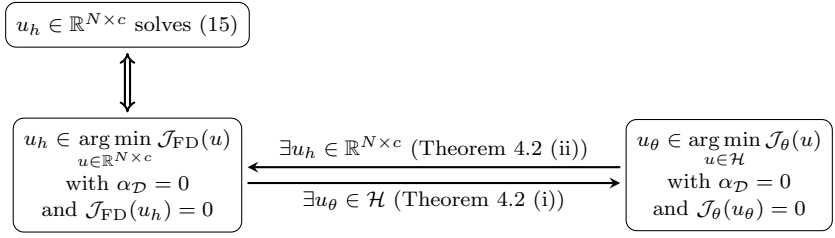

\subsection{Non-Uniqueness of Minimizers}

As in the AD-PINN formulation, the FD-PINN problem inherits the same ill-posedness: minimizers of the FD-PINN loss are never unique. This is made precise in the following theorem.

\begin{theorem}[Non-uniqueness of FD-PINN minimizers]\label{thm:nonuniq:FDPINN}
Consider finite collocation sets $\Omega^h \subset{\Omega}$ and $\Gamma^h\subset \Gamma$.
Let $L\ge 2$ and let $\mcH$ denote the set of depth-$L$ neural networks with either
\begin{enumerate}[(i)]
\item ReLU activation functions, or
\item non-polynomial activation functions $\sigma\in C(\R)$ that are strictly monotone if $L>2$, or
\item activation $\sigma \in C^{\ell}(\R,\R)$, $\ell:= |\Omega^h \cup \Gamma^h|$, satisfying $\sigma^{(k)}(a) \neq 0$ for some $a\in\R$ and all $0 \le k \le \ell$ that are strictly monotone if $L>2$.
\end{enumerate}
If $\argmin_{u_\theta \in \mcH} \mcJ_{\theta}(u_\theta)$ has a solution, then it has infinitely many solutions in $\mcH$.
\end{theorem}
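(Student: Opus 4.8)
The plan is to exploit the structural fact that the FD-PINN functional $\mcJ_\theta$ in \eqref{eq:fulldiscretePINN} depends on a candidate network $u_\theta$ \emph{only} through its values on the finite stencil $\Omega^h\cup\Gamma^h$. Indeed, the finite-difference residuals $\mcF_h(u_\theta(z))$ and $\mcB_h(u_\theta(z))$ are assembled purely from grid values $\{u_\theta(z'):z'\in\Omega^h\cup\Gamma^h\}$, and the data term reads only $u_\theta(z)$ for $z\in\mcD^h\subseteq\Omega^h\cup\Gamma^h$; in contrast to the AD-PINN functional $\mcJ^h$, no derivatives of $u_\theta$ enter. Consequently, if I can produce a single nontrivial $\Phi\in\mcH$ that vanishes at every grid point, i.e. $\Phi(z)=0$ for all $z\in\Omega^h\cup\Gamma^h$ but $\Phi\not\equiv0$, then adding any multiple $\lambda\Phi$ to a minimizer leaves all grid values, and hence the entire loss, unchanged while yielding a genuinely different function. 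This mirrors the mechanism of \cref{thm:nonuniq}, but the interpolation conditions are now far weaker: only order-zero vanishing is required. This is exactly why the ``zero on open sets'' construction of \cref{Lem:ReLU:Null:interpolation}, with its depth bound $\lceil\log_2(d+1)\rceil+1$, can be replaced by plain pointwise interpolation and the depth requirement relaxes to $L\ge2$.

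First I would fix a point $z_0\in\Omega\setminus(\Omega^h\cup\Gamma^h)$ and a vector $v\in\R^c\setminus\{0\}$, and seek $\Phi\in\mcH$ interpolating the data $\Phi(z)=0$ at the $N:=|\Omega^h\cup\Gamma^h|$ grid points together with $\Phi(z_0)=v$. Since the grid together with $z_0$ forms a set of $N+1$ distinct points, such a depth-$2$ interpolant exists: in cases (i) and (ii) by \cite[Theorem 5.1]{Pinkus:95} (ReLU, and any continuous non-polynomial activation), and in case (iii) by \cref{cor:pointwise-bc-vector} specialized to $r_\mcF=r_\mcB=0$, noting that $\ell=|\Omega^h\cup\Gamma^h|$ matches the number of interpolation nodes. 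For depth $L>2$ I would lift this shallow interpolant to depth $L$ without altering its grid values: in the ReLU case by inserting exact identity layers via $x=\max\{0,x\}-\max\{0,-x\}$ as in \cref{Sec:NN}, and in the smooth cases (ii) and (iii) by invoking the strict-monotonicity hypothesis, which is precisely the assumption under which \cref{Lem:Null:Interpolation} furnishes a depth-$L$ network realizing the prescribed values. In every case the resulting $\Phi\in\mcH$ satisfies $\Phi|_{\Omega^h\cup\Gamma^h}=0$ yet $\Phi(z_0)=v\neq0$, so $\Phi\not\equiv0$.

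To conclude, let $\hat{u}_\theta\in\argmin_{u_\theta\in\mcH}\mcJ_\theta(u_\theta)$ be the assumed minimizer. Because $\mcH$ imposes no width restriction, it is closed under finite linear combinations (see \cref{sec:closure}), so $\hat{u}_\theta+\lambda\Phi\in\mcH$ for every $\lambda\in\R$. Since $\Phi$ vanishes on the whole stencil, $(\hat{u}_\theta+\lambda\Phi)(z)=\hat{u}_\theta(z)$ for all $z\in\Omega^h\cup\Gamma^h$, whence $\mcJ_\theta(\hat{u}_\theta+\lambda\Phi)=\mcJ_\theta(\hat{u}_\theta)$ by the first paragraph, so $\hat{u}_\theta+\lambda\Phi$ is again a minimizer. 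The map $\lambda\mapsto\hat{u}_\theta+\lambda\Phi$ is injective because $\Phi\not\equiv0$, so $\{\hat{u}_\theta+\lambda\Phi:\lambda\in\R\}$ is an infinite family of distinct minimizers, which establishes the claim. Note that this non-uniqueness is entirely an off-grid phenomenon, consistent with the grid-restriction results of \cref{prop:minimizer:equivalence}.

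The one step that genuinely requires care is the depth-$L>2$ construction for the smooth activations (ii) and (iii): unlike ReLU, a $C^\ell$ activation admits no exact identity layer, so one cannot simply pad a shallow interpolant. This is where the strict-monotonicity hypothesis does the work, by ensuring that the auxiliary layers supplied by \cref{Lem:Null:Interpolation} can be taken injective and composed without collapsing the prescribed grid values. I expect this to be the main obstacle; everything else is a direct transcription of the order-zero ($r_\mcF=r_\mcB=0$) specialization of the \cref{thm:nonuniq} argument, made possible by the fact that $\mcJ_\theta$ never probes derivatives or off-grid values.
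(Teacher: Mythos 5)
Your argument is structurally identical to the paper's proof: construct a nontrivial $\Phi\in\mcH$ vanishing at all stencil points, note that $\mcJ_\theta$ only probes grid values, and use closure of $\mcH$ under linear combinations to obtain the affine family $\{\hat u_\theta+\lambda\Phi:\lambda\in\R\}$ of minimizers. The mechanism, the use of \cite[Theorem 5.1]{Pinkus:95} for cases (i) and (ii), the identity-layer padding for ReLU, and the strict-monotonicity-based depth lifting for the smooth cases all match.

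The one step that does not go through as written is the depth-$2$ interpolant in case (iii). You invoke \cref{cor:pointwise-bc-vector} with $r_\mcF=r_\mcB=0$, claiming that $\ell=|\Omega^h\cup\Gamma^h|$ "matches the number of interpolation nodes"; but in that corollary $\ell$ counts hyperplanes, namely $d+r$ per node, so interpolating at the $N:=|\Omega^h\cup\Gamma^h|$ grid points plus $z_0$ requires $\sigma\in C^{(N+1)d-d}=C^{Nd}$ with nonvanishing derivatives up to order $Nd$. Hypothesis (iii) of the theorem only supplies $\sigma\in C^{N}$ with nonvanishing derivatives up to order $N$, which is strictly weaker whenever $d\ge 2$. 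The paper instead uses \cref{Lem:Null:Interpolation} with $r_\mcF=r_\mcB=0$ and a generic projection direction $v_*$ (cf.\ \cref{rem:projection-direction}), which reduces the problem to one-dimensional Hermite interpolation and needs exactly $C^{N_\mcF+N_\mcB}=C^{N}$ regularity --- this is precisely why hypothesis (iii) is calibrated the way it is. Replacing your appeal to \cref{cor:pointwise-bc-vector} by \cref{Lem:Null:Interpolation} (which, as a bonus, already delivers the depth-$L$ network directly, so no separate lifting step is needed in case (iii)) repairs the argument; the rest of your proof is correct.
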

\begin{proof}
The proof follows the same idea as the proof of \cref{thm:nonuniq}. However, in this context it suffices to construct a neural network $\Phi\in\mcH$, not identically zero, that satisfies the interpolation conditions $\Phi(z) = 0$ for all $z\in\Omega^h\cup \Gamma^h$.

By \cite[Theorem 5.1]{Pinkus:95}, for (i) and (ii), and by \cref{Lem:Null:Interpolation} (with $r_\mcF=0=r_\mcB$ and suitable $v_*\in\R^d$), for (iii), there exists a one-hidden-layer neural network $\tilde{\Phi}$ with this property, i.e., $\tilde{\Phi}(z)=0$ for $z\in\Omega^h\cup \Gamma^h$ and  $\tilde{\Phi}(z_0) \not=0$ for some $z_0 \in \Omega\setminus\{\Omega^h\cup \Gamma^h\}$. To obtain a depth-$L$ neural network we just insert layers that do not change the interpolation conditions and keep a non-zero value in $z_0$. This can be realized as in the proof of \cref{Lem:Null:Interpolation}, for (ii) and (iii), due to the strict monotonicity of $\sigma$, and as in the proof of \cref{Lem:ReLU:Null:interpolation} for (i) by adding identity layers (see also \cref{Sec:NN}), yielding $\Phi\in\mcH$ such that ${\Phi}(z)=0$ for $z\in\Omega^h\cup \Gamma^h$ and  ${\Phi}(z_0) \not=0$.

Let $\hat{u}\in\mcH$ be a solution of $\argmin_{u\in \mcH} \mcJ_{\theta}(u)$. Then for any $\lambda\in\R$, $(\hat{u} + \lambda \Phi)(z) = \hat{u}(z)$ for all $z\in \Omega^h \cup \Gamma^h$ and hence $\mcJ_{\theta}(\hat{u} + \lambda \Phi) = \mcJ_{\theta}(\hat{u})$. 
Since $\hat{u},\Phi\in\mcH$ and $\mcH$ is closed under finite linear combinations, see \cref{sec:closure}, we obtain that $\hat{u} + \lambda \Phi \in \mcH$ for any $\lambda\in\R$ yielding infinitely many solutions of $\argmin_{u\in \mcH} \mcJ_{\theta}(u)$ in $\mcH$. 
\end{proof}
\begin{remark}\label{rem:overfitting:FDM}
The construction used in the proof of \cref{thm:nonuniq:FDPINN} is analogous to that in the proof of \cref{thm:nonuniq}: 
we again construct a nontrivial $\Phi\in\mcH$ that vanishes on $\Omega^h\cup\Gamma^h$ and such that $\hat u + \lambda\Phi\in\mcH$ is a minimizer of $\mcJ_\theta$ for every $\lambda\in\R$. 
Let $u_h\in\R^{N\times c}$ denote a solution of the discrete PDE \eqref{PhysicalModel:Discrete}. Then, in contrast to \cref{rem:overfitting} in the AD-PINN setting, we obtain 
\[
\sum_{z\in\Omega^h\cup\Gamma^h} 
|\hat u(z)+\lambda\Phi(z)-u_h(z)|
=
\sum_{z\in\Omega^h\cup\Gamma^h} 
|\hat u(z)-u_h(z)|
\qquad\text{for all }\lambda\in\R,
\]
and in particular this sum vanishes for all $\lambda$ if $\mcJ_\theta(\hat u)=0$, since by \cref{prop:minimizer:equivalence,prop:Equivalence:Solution:FD} the FD-PINN minimizer $\hat{u}$ then coincides with $u_h$ on $\Omega^h\cup\Gamma^h$.
Thus, while FD-PINNs exhibit the same affine non-uniqueness in $\mcH$ as 
AD-PINNs, this non-uniqueness does not alter the discrete finite-difference 
solution on the stencil.
\end{remark}

\paragraph{Implications of non-uniqueness: AD-PINNs vs.\ FD-PINNs}
The non-uniqueness results above show that both AD-PINNs and FD-PINNs admit infinitely many minimizers of the respective loss, so that the corresponding optimization problems are ill-posed. 
For FD-PINNs, however, the situation is substantially less problematic from the perspective of PDE approximation in the regime where a zero-loss solution exists; see also \cref{rem:overfitting:FDM}.
In this situation, which occurs whenever the discrete finite-difference problem \eqref{PhysicalModel:Discrete} admits a solution and $\alpha_{\mcD}=0$, \cref{prop:Equivalence:Solution:FD,prop:minimizer:equivalence} imply that every FD-PINN minimizer $u_\theta$ with $\mcJ_\theta(u_\theta)=0$ coincides with a finite-difference solution at all stencil points. 
Thus, in this specific zero-residual regime, while FD-PINNs are ill-posed as function-approximation problems -- infinitely many distinct continuous extensions exist -- they are effectively unique on the grid whenever the discrete PDE admits a unique solution.
If the discrete PDE is not uniquely solvable, then FD-PINN minimizers reproduce different discrete solutions accordingly and uniqueness on the grid is obviously not guaranteed in this case.

For AD-PINNs, the picture is less favorable, as they do not enjoy such grid-level uniqueness. If the differential operator $\mcF$ contains no zeroth-order term, the residual depends only on derivatives of $u$, making it possible for two distinct AD-PINN minimizers to disagree already on $\Omega^h \cup \Gamma^h$ while achieving the same loss value, cf., \cref{Example:PINN}.
 Even when zeroth-order terms or data-misfit terms are present, our analysis does not provide an analogue of the grid-level uniqueness enjoyed by FD-PINNs. 
Consequently, an AD-PINN minimizer may not be tied to any underlying consistent finite-difference scheme, and different minimizers may represent qualitatively different approximate solutions, even if they achieve identical loss values. 
In fact, as shown in \cref{rem:overfitting}, the AD-PINN minimizer set may 
contain functions that deviate arbitrarily far from the true solution of the 
continuous PDE while attaining the same loss value. 
\nnew{Nevertheless, uniqueness of the minimizer values at the collocation points for AD-PINNs can only be guaranteed under additional assumptions. For instance, if the discrete loss functional $\mcJ^h$ is strictly convex on $\mcH$ and a minimizer exists, then all minimizers agree on the collocation points. Indeed, suppose $u_1,u_2\in\mcH$ are two distinct minimizers of $\mcJ^h$. By strict convexity, $\mcJ^h(\frac{u_1+u_2}{2}) < \frac{1}{2}\mcJ^h(u_1) + \frac{1}{2}\mcJ^h(u_2)$ which contradicts the minimality of $u_1$ and $u_2$, since $\tfrac{u_1+u_2}{2}\in\mcH$ by closure of $\mcH$ under linear combinations.}

The structural reason for this discrepancy is that AD-PINNs compute derivatives by automatic differentiation pointwise, while FD-PINNs approximate derivatives through finite-difference stencils that couple neighboring nodes.
This local coupling prevents pointwise isolation, so FD-PINNs do not possess the pointwise freedom present in AD-PINNs. 
It is precisely this structural restriction that forces all zero-loss FD-PINN minimizers to agree on the stencil (whenever the discrete PDE solution is unique), even though they may differ between grid points.

This distinction also clarifies the conceptual diagram in \cref{fig:minimizer-structure,fig:minimizer-structure:FD}: in the AD-PINN setting (\cref{fig:minimizer-structure}) the flow of information runs only from the continuous PDE to the AD-PINN loss, whereas in the FD-PINN formulation (\cref{fig:minimizer-structure:FD}) there is a two-way correspondence between the discrete PDE and the FD-PINN objective. 
The above explained contrast and the bidirectional link explain why FD-PINNs can be interpreted as neural parameterizations of a classical finite-difference discretization, while the AD-PINN problem is ill-posed without a corresponding uniqueness guarantee at the collocation points.

\section{Numerical Experiments}\label{Sec:NumericalExperiments}

The following experiments are not intended to demonstrate algorithmic novelty but provide representative cases illustrating typical behaviors of AD-PINNs and FD-PINNs. 
The experiments confirm that theoretical ill-posedness translates into practical instability for AD-PINNs, while FD-PINNs seem to constrain the solution space more favorably.
 
Three examples are considered. For FD-PINNs we use ReLU activation functions throughout, reflecting the fact that, in this formulation, derivatives are computed via finite differences and no additional smoothness of the activation function is required. 
 The first example concerns a Poisson problem with nontrivial boundary conditions, where we demonstrate that AD-PINNs can fail to converge to the correct solution, while FD-PINNs succeed.
The second example addresses a time-dependent Schrödinger equation, serving as a representative forward problem where FD-PINNs perform comparably to AD-PINNs. 
To the best of our knowledge, FD-PINNs have not previously been evaluated on oscillatory Schrödinger-type problems using nonsmooth activations such as ReLU; this example therefore also illustrates that FD-PINNs remain effective without smooth activation functions.
The third example treats an inverse Navier-Stokes problem to demonstrate that FD-PINNs can also handle data-driven tasks similarly to AD-PINNs. To our knowledge, FD-PINNs have been less explored in data‐driven/inverse contexts, and our third numerical example addresses this gap.

Before turning to the individual examples, we summarize the general numerical setup used throughout this section.
All neural networks are implemented in Python using TensorFlow \cite{tensorflow2015} and are trained with the \nnew{TensorFlow's built-in} Adam optimizer \cite{KingmaBa:15} with a fixed learning rate of $10^{-3}$. 
We deliberately refrain from employing multi-stage optimization strategies such as Adam$\to$L-BFGS or from tuning network architectures for optimal accuracy, since the purpose of the experiments is to assess the behavior of the FD-PINN formulation under a consistent and standard setup rather than to optimize performance. 
Unless stated otherwise, the architectures used in the examples therefore provide sufficient expressive capacity but are not further tuned. 
At each iteration we evaluate the current objective and update the stored approximation $u_\theta$ only if the new iterate attains a strictly smaller loss than all previous ones. 
In this way, the sequence of recorded energies is monotonically decreasing and the final reported network corresponds to the best objective value observed along the optimization trajectory.
The implementation is publicly available at \url{https://github.com/andreastvlanger/PINN}.

\subsection{Poisson Equation with Singularity}
We consider the two dimensional Poisson equation with homogeneous boundary conditions given as
\begin{equation}\label{eq:Poisson}
\begin{split}
-\Delta u &= 1 \qquad \text{in } \Omega,\\
u&=0 \qquad \text{on } \Gamma = \partial \Omega,
\end{split}
\end{equation}
where $\Omega=(-1,1)^2 \setminus \left([0,1)\times\{0\}\right)\subset\R^2$ and $u:\Omega\to \R$, i.e., $d=2$ and $c=1$. This model problem follows the setup introduced in \cite{EYu:18}, where it was used to study the deep Ritz method. 

The domain $\Omega$ is uniformly discretized with mesh size $h=0.05$ in both dimensions, yielding the discrete domain $\Omega^h = \{(x_i,y_j) \mid x_i = -1 + hi,\ y_j=-1+hj, \ i,j\in\Z,\ -1\leq x_i,y_j\leq 1\}$ of collocation points. Let $N\in\N$ be the number of collocation points, i.e., here we have \nnew{$N_\mcF=1501$} points inside the domain and $N_\mcB = \nnew{180}$ boundary points, yielding $N = 41 \times 41 = 1\,681$ points in total. 
Utilizing these points, a finite difference method (FDM) of \eqref{eq:Poisson} yields \begin{equation}\label{Eq:Poisson:FDM}
A u^h = b,
\end{equation}
where $A\in\R^{N \times N}$ is a standard finite difference discretization of the Laplacian $\Delta$ with incorporated Dirichlet boundary conditions, $b\in\R^N$ is the respective discretized right hand side, and $u^h\in\R^N$ the associated finite difference solution depicted in \cref{Fig:Poisson:FDM}. 

\begin{figure}[h]
\begin{center}
\newcommand{\mysubfigwidth}{0.33\textwidth}
\newcommand{\figscale}{0.35}

\subfloat[Solution of FDM\label{Fig:Poisson:FDM}]{
    \includegraphics[scale=\figscale]{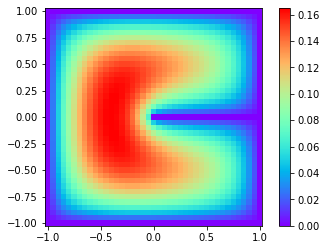}
  }\hfill
  \subfloat[Solution of FD-PINN \label{Fig:Poisson:FDPINN}]{
    \includegraphics[scale=\figscale]{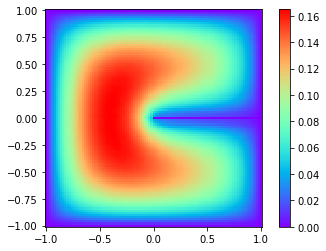}
  }\hfill
  \subfloat[Solution of AD-PINN with hard boundary conditions. \label{Fig:Poisson:PINN:bcTrue}]{
    \includegraphics[scale=\figscale]{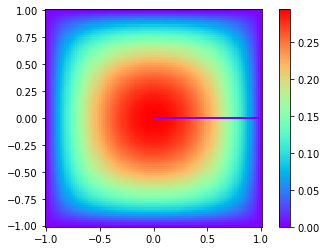}
  }\hfill
  \subfloat[Solution of AD-PINN with $\alpha_\mcB=1$ \label{Fig:Poisson:PINN:1}]{
  \includegraphics[scale=\figscale]{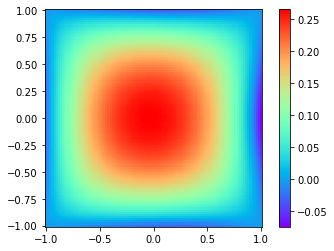}
  }\hfill
  \subfloat[Solution of AD-PINN with $\alpha_\mcB=100$ \label{Fig:Poisson:PINN:100}]{
  \includegraphics[scale=\figscale]{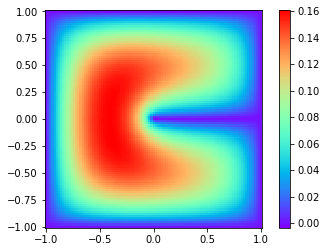}
  }\hfill
  \subfloat[Solution of AD-PINN with $\alpha_\mcB=10\,000$ \label{Fig:Poisson:PINN:10000}]{
  \includegraphics[scale=\figscale]{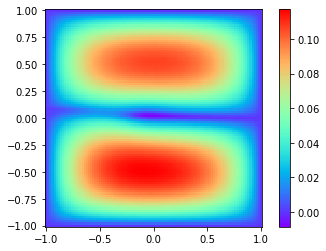}
  }
\end{center}
\caption{Solutions of the Poisson problem \eqref{eq:Poisson} obtained by FDM, FD-PINN and AD-PINN.\label{Fig:Poisson}}
\end{figure}

To obtain an FD-PINN solution, we utilize \eqref{Eq:Poisson:FDM}. Then \eqref{eq:fulldiscretePINN} can be written as
\begin{align*}
\min_{\theta \in \R^M}\; & \nnew{h^2}|A u_\theta^h - b|_2^2,
\end{align*}
where $u_\theta^h$ is the vector of the values of $u_\theta$ sampled at the grid points of $\Omega^h$. Here we set $\alpha_{\mcF} = 1$ and $\nu=2$.
Note that all boundary conditions are included in $A$, and hence the first and second terms in \eqref{eq:fulldiscretePINN} merge into one term. 
To enforce the boundary conditions even more strongly, we implement them into the neural network, i.e., we search for a solution $u_\theta$ in the space $\{u_\theta \in \mcH \colon u_\theta(z) = 0 \text{ for } z\in \Gamma\}$. 
Moreover, the neural network architecture is specified as follows: the neural network consists of an input layer with 2 neurons, 7 hidden layers each having 32 neurons and ReLU activation functions, and an output layer with 1 neuron.
The overall optimization (learning) process is run for 200\,000 iterations. The FD-PINN solution is shown in \cref{Fig:Poisson:FDPINN} with resolution $101 \times 101$. Note that we can depict the solution with a finer resolutions, as the solution is a continuous function.

An AD-PINN tackles \eqref{eq:Poisson} by solving
\begin{align}\label{eq:Poisson:PINN}
\min_{\theta \in \R^M}\; & \frac{1}{N_{\mcF}}\sum_{i=1}^{N_\mcF} |-\Delta u_\theta(z_\mcF^i)  - 1|^2 + \alpha_\mcB\frac{1}{N_{\mcB}} \sum_{i=1}^{N_\mcB} |u_\theta(z_\mcB^i)|^2, 
\end{align}
where $\Omega^h = \{z_\mcF^i\}_{i=1}^{N_\mcF}$ are the collocation points inside the domain and $\Gamma^h = \{z_\mcB^i\}_{i=1}^{N_\mcB}$ are the collocation points on the boundary. We search for a solution among the neural networks consisting of an input layer with 2 nodes, 7 hidden layers each with 32 nodes, an output layer with 1 node and $\tanh$ activation functions on all nodes in the hidden layers. 
We compute the solution under two settings:
\begin{enumerate*}[(i)]
\item\label{settingi} incorporating the boundary conditions directly into the neural network, and
\item\label{settingii} without incorporating the boundary conditions.
\end{enumerate*} 
In setting \ref{settingi} the choice of $\alpha_\mcB$ is irrelevant, which looks pleasant at first sight, as its choice is a priori not clear. In setting \ref{settingii} we consider $\alpha_\mcB \in \{1, 100, 10\,000\}$ to show the influence of the parameter on the solution process. Again the optimization process is terminated after 200\,000 iterations.
The respective obtained results are shown in \cref{Fig:Poisson:PINN:1,Fig:Poisson:PINN:100,Fig:Poisson:PINN:10000} at a resolution of $101 \times 101$.

\paragraph{Interpretation}
In the AD-PINN approach, the choice of the parameter $\alpha_\mcB$, or more generally the treatment of the boundary conditions, is delicate. 
In particular, \cref{Fig:Poisson} shows that when $\alpha_\mcB$ is either too small or too large, no suitable approximations is obtained within $200\,000$ iterations. After this many iterations, the loss remains around $0.0047$ for $\alpha_\mcB=1$ and $0.00038$ for $\alpha_\mcB=10\,000$, indicating that substantially more iterations would be required to reach a satisfactory solution. 
For $\alpha_\mcB=100$ a reasonable approximation is generated, yielding a loss of around $6.9656\cdot10^{-6}$. However, we see in \cref{Fig:Poisson:PINN:100} that the boundary conditions do not hold exactly, while this is the case for the solutions of the FDM, the FD-PINN, and the AD-PINN with hard boundary conditions. 
Interestingly, the AD-PINN with hard boundary conditions finds another solution of \eqref{eq:Poisson:PINN} than the AD-PINN with $\alpha_\mcB=100$. 
The loss evaluated at the solution in \cref{Fig:Poisson:PINN:bcTrue} yields approximately $5.8781 \cdot 10^{-8}$ indicating that it is indeed a very close approximation of a solution. 
Note that in all our computations we use single-precision floating-point format (IEEE-754 float32), which has a machine epsilon of around $1.19 \cdot 10^{-7}$.
As shown in \cref{thm:nonuniq}, the AD-PINN approach in general does not have a unique solution. 
Here, by incorporating the boundary conditions into the neural network, we are able to find an alternative solution numerically, i.e., a minimizer which is not a solution of the original PDE problem.

Let us elaborate why the function depicted in \cref{Fig:Poisson:PINN:bcTrue} is reasonable as a solution of \eqref{eq:Poisson:PINN}. As the boundary conditions always hold, the second term in \eqref{eq:Poisson:PINN} is always 0 and hence does not influence the optimization process. 
The continuous Laplacian is a local operator and in \eqref{eq:Poisson:PINN} is only evaluated at a finite number of points inside the domain. 
Hence the first term in \eqref{eq:Poisson:PINN} does not see the boundary, and the optimization procedure somehow overlooks the boundary at $[0,1) \times \{0\}$, as the boundary conditions already hold there anyway. 
Note that the behavior in a very close vicinity of the boundary, where no collocation point is, does not affect the energy at all. 
Hence, in this region the PDE is effectively unenforced, and the neural network output can vary freely, subject only to the structural constraints of the chosen neural network class, as noted in \cref{Example:PINN}.

In contrast, the linear system \eqref{Eq:Poisson:FDM} contains the boundary conditions, and hence the objective function of the FD-PINN approach is always influenced by the boundary conditions. Hence, in this sense, the FD-PINN seems to be superior to the AD-PINN.

\subsection{Schrödinger Equation}

We consider the time-dependent nonlinear Schrö\-dinger equation from \cite{RaPeKa:19} with periodic boundary conditions given as
\begin{equation}\label{eq:Schrodinger}
\begin{split}
&\im \frac{\partial\psi}{\partial t} + 0.5 \frac{\partial^2 \psi}{\partial x^2} + |\psi|^2 \psi = 0, \qquad x\in [-5,5], \ t\in [0,2\pi],\\
&\psi(0,x) = 2 \operatorname{sech}(x),\qquad x\in [-5,5] \\
&\psi(t,-5) = \psi(t,5), \ \frac{\partial \psi}{\partial x}(t,-5) = \frac{\partial \psi}{\partial x}(t,5),\qquad t\in [0,2\pi],
\end{split}
\end{equation}
where $\psi$ represents a complex valued function and $\im$ denotes the complex number $\sqrt{-1}$.

To build the FD-PINN, we need to discretize the PDE and the respective boundary conditions. In particular, we discretize time equidistantly into $T+1$ points, defined by $t_k = k h_t$ for $k=0,\ldots,T$, where $h_t=\frac{2 \pi}{T}$. Similarly, the spatial domain is divided into $N+1$ equidistant points given by $x_j = -5 + j h_x$, $j=0,\ldots,N$, with $h_x=\frac{10}{N}$.
Then, introducing the ghost point $x_{-1} := x_0 - h_x$, the periodic Neumann boundary condition is approximated by
\[
\frac{\partial\psi}{\partial x}(t_k,-5) \approx \frac{\psi(t_k,x_0) - \psi(t_k,x_{-1})}{h_x} = \frac{\psi(t_k,x_{N}) - \psi(t_k,x_{N-1})}{h_x} \approx \frac{\partial\psi}{\partial x}(t_k,5)
\]
for all $k=0,\ldots,T$.
This, together with the periodic Dirichlet boundary condition $\psi(t_k,x_0) = \psi(t_k,x_{N})$ yields $\psi(t_k,x_{-1}) = \psi(t_k,x_{N-1})$ for all $k=0,\ldots,T$. This allows us to incorporate the boundary conditions directly into the discretized PDE. 
Thereby, the PDE is approximated in time using the implicit Euler method and in space using standard finite difference schemes, leading to
\begin{equation*}
\begin{split}
f(t_k,x_j):=&\im \frac{\psi(t_{k+1},x_j)-\psi(t_{k},x_j)}{h_t} \\
&+ \frac{0.5\left(\psi(t_{k+1},x_{j+1}) - 2 \psi(t_{k+1},x_{j}) + \psi(t_{k+1},x_{j-1})\right)}{h_x^2}  \\
&+ |\psi(t_{k+1},x_{j})|^2 \psi(t_{k+1},x_{j})
\end{split}
\end{equation*}
for $k=0,\ldots,T-1$ and $j=0,\ldots,N-1$, where $x_{-1}:=x_{N-1}$.

 The loss function then reads as
\begin{equation}\label{eq:SchroedingerFDPINN}
 \frac{1}{N T}\sum_{j=0}^{N-1} \sum_{k=0}^{T-1} |f(t_k,x_j)|^2 + \frac{1}{N+1}\sum_{j=0}^N |\psi(0,x_j) - 2 \operatorname{sech}(x_j)|^2.
\end{equation}
The neural network is constructed such that it has 2 input neurons (time and space) and two output neurons, representing the real and imaginary part of $\psi$. 
It consists of 20 hidden layers, each having 100 neurons, which should give the neural network sufficient approximation capacity when using ReLU activation functions. 
The architecture is not optimized in any way, as our aim here is solely to evaluate the approximation capability of the FD-PINN formulation.
Further, we incorporate the initial value into the neural network directly, i.e., the solution is searched in the space $\left\{ \psi \in \mcH \colon \psi(0,x_j) = 2 \operatorname{sech}(x_j) \text{ for all } j\in\{0,\ldots,N\}\right\}$, yielding the second term in \eqref{eq:SchroedingerFDPINN} always equal to zero. In our numerical experiment we use $N=100$ and $T=500$ yielding 50\,000 data points equidistantly meshing the time-space domain. 
The overall optimization (learning) process is terminated after 350\,000 iterations. 
All initial-condition and reference data follow the setup of \cite{RaPeKa:19}, and were taken from \url{https://github.com/maziarraissi/PINNs}.

\begin{figure}[h]
\centering
\newcommand{\mysubfigwidth}{0.3\textwidth}

\subfloat[Magnitude of the FD-PINN predicted solution $|\psi|$
\label{Fig:Schrodinger:FDPINN:s}]{
    \includegraphics[width=8cm, height=4cm]{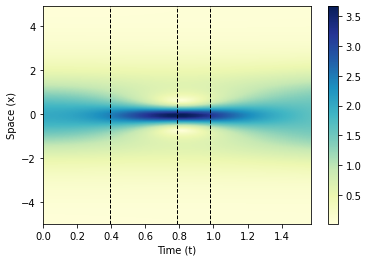}
}\\[1ex]
\subfloat[Comparison of exact and FD-PINN predicted solution at $t=0.393$
\label{Fig:Schrodinger:FDPINN:1}]{
    \includegraphics[width=\mysubfigwidth]{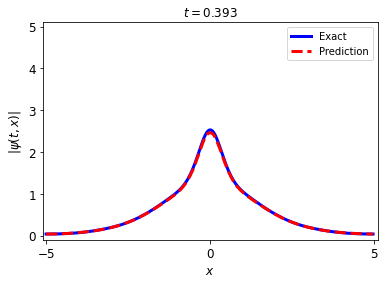}
}\hfill
\subfloat[Comparison of exact and FD-PINN predicted solution at $t=0.785$
\label{Fig:Schrodinger:FDPINN:2}]{
    \includegraphics[width=\mysubfigwidth]{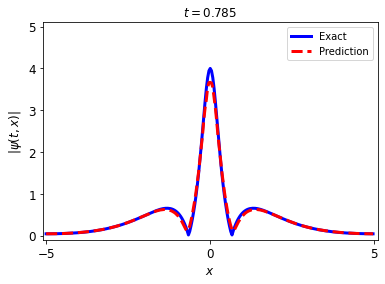}
}\hfill
\subfloat[Comparison of exact and FD-PINN predicted solution at $t=0.982$
\label{Fig:Schrodinger:FDPINN:3}]{
    \includegraphics[width=\mysubfigwidth]{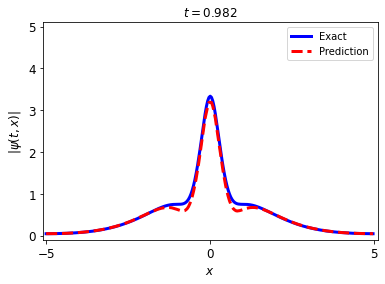}
}

\caption{Solutions of the Schrödinger equation \eqref{eq:Schrodinger} obtained by the FD-PINN.}
\label{Fig:Schrodinger:FDPINN}
\end{figure}

\begin{figure}[h]
\centering
\newcommand{\mysubfigwidth}{0.3\textwidth}

\subfloat[Magnitude of the AD-PINN predicted solution $|\psi|$
\label{Fig:Schrodinger:PINN:s}]{
    \includegraphics[width=8cm, height=4cm]{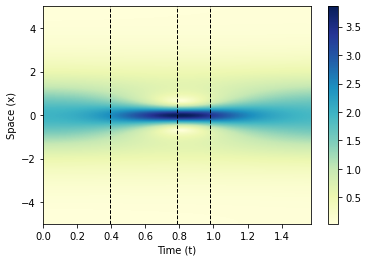}
}\\[1ex]

\subfloat[Comparison of exact and AD-PINN predicted solution at $t=0.393$
\label{Fig:Schrodinger:PINN:1}]{
    \includegraphics[width=\mysubfigwidth]{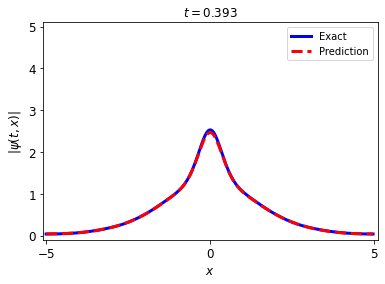}
}\hfill
\subfloat[Comparison of exact and AD-PINN predicted solution at $t=0.785$
\label{Fig:Schrodinger:PINN:2}]{
    \includegraphics[width=\mysubfigwidth]{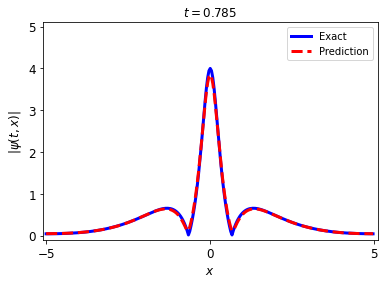}
}\hfill
\subfloat[Comparison of exact and AD-PINN predicted solution at $t=0.982$
\label{Fig:Schrodinger:PINN:3}]{
    \includegraphics[width=\mysubfigwidth]{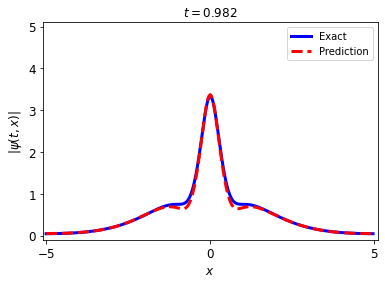}
}
\caption{Solutions of the Schrödinger equation \eqref{eq:Schrodinger} obtained by the AD-PINN.}
\label{Fig:Schrodinger:PINN}
\end{figure}

The solution obtained with the FD-PINN is illustrated in \cref{Fig:Schrodinger:FDPINN}. In \cref{Fig:Schrodinger:FDPINN:s}, we display the magnitude of the predicted solution $|\psi|$. 
The prediction accuracy, evaluated on the equidistant test mesh, yields a relative $L^2$-error of $6.4 \times 10^{-2}$. A more detailed assessment is provided in \cref{Fig:Schrodinger:FDPINN:1,Fig:Schrodinger:FDPINN:2,Fig:Schrodinger:FDPINN:3}, where the predicted solution is compared with the exact one at representative time instants $t = 0.393, 0.785, 0.982$. 
These comparisons show that, even when trained with only limited initial-condition data, the FD-PINN successfully captures the nonlinear dynamics of the Schrödinger equation.

For reference, we compare the FD-PINN against the AD-PINN applied to the same problem, cf., \cite{RaPeKa:19} and \cref{Fig:Schrodinger:PINN}. The AD-PINN consists of four hidden layers with 100 neurons each, employs $\tanh$ activation functions, and is trained for $20\,000$ iterations. 
This configuration yields a relative $L^2$-error of $3.3 \times 10^{-2}$, which is only slightly smaller than that of the FD-PINN, showing that both perform comparable.

\subsection{Data-driven Discovery of Partial Differential Equations (Navier-Stokes equation)}
We consider the two-dimensional incompressible Navier-Stokes equations in the velocity-pressure formulation:
\begin{align*}
\frac{\partial u}{\partial t} + \lambda_1 \left(u \frac{\partial u}{\partial x} + v \frac{\partial u}{\partial y}\right) &= -\frac{\partial p}{\partial x} + \lambda_2 \left(\frac{\partial^2 u}{\partial x^2} + \frac{\partial^2 u}{\partial y^2}\right),\\
\frac{\partial v}{\partial t} + \lambda_1 \left(u \frac{\partial v}{\partial x} + v \frac{\partial v}{\partial y}\right) &= -\frac{\partial p}{\partial y} + \lambda_2 \left(\frac{\partial^2 v}{\partial x^2} + \frac{\partial^2 v}{\partial y^2}\right),\\
\frac{\partial u}{\partial x} + \frac{\partial v}{\partial y} &= 0,
\end{align*}
where $(u,v)$ denote the velocity components, $p$ the pressure, and $\lambda_1$, $\lambda_2$ are coefficients corresponding to convection and viscosity. Our objective is to recover $\lambda_1$, $\lambda_2$ and the pressure field $p$ from velocity observations alone.

\paragraph{Data generation} 
Training data were generated by a finite difference solver for the two-dimensional
incompressible Navier-Stokes equations on a periodic square domain
$[0,2\pi)\times[0,2\pi)$, discretized with $32\times 32$ grid points, i.e., 
\[
\Omega^h = \left\{ (x_i,y_j) \;\middle|\; 
x_i = i h_x,\; y_j = j h_y,\;
i=0,\ldots,31,\; j=0,\ldots,31 \right\},\] 
with \(h_x = h_y = \tfrac{2\pi}{32}.\) Spatial derivatives
were approximated by second-order central finite differences, while time stepping was
performed with an implicit Euler discretization of the diffusive terms and an explicit
treatment of convection. The resulting nonlinear implicit system at each time step was
solved by a fixed-point iteration. 

Within each fixed-point iteration, a Helmholtz problem was solved for an intermediate velocity field using the current iterate of the nonlinear term. 
To enforce incompressibility, this velocity was projected onto the divergence-free subspace by solving a discrete Poisson equation for a scalar correction potential $\phi$ and updating
\[
u^{k+1} = \tilde{u} - h_t\, \frac{\partial \phi}{\partial x}, \qquad
v^{k+1} = \tilde{v} - h_t\, \frac{\partial \phi}{\partial y}, \qquad
p^{k+1} = p^k + \phi,
\]
where $h_t$ is the temporal step size of the finite difference solver, 
$k$ denotes the index of the fixed-point iteration within the current time step, 
$(\tilde{u},\tilde{v})$ is the intermediate velocity, and $p$ the pressure; see \cite{GuMiSh:06}. 
This projection method ensures that each fixed-point iterate satisfies the discrete 
divergence-free constraint, and convergence is declared once successive iterates differ by less than a prescribed tolerance. 

To prevent nonlinear advection from being absorbed into the pressure gradient, the initial condition was chosen as a multi-mode divergence-free streamfunction,
\begin{equation*}
\begin{split}
    \psi(x,y)~=~&1.00 \,\sin(x)\cos(y)
                 + 0.30 \,\sin(2x)\cos(y)\\
                 &+ 0.20 \,\sin(x)\cos(2y)
                 + 0.15 \,\sin(2x)\cos(2y).
\end{split}
\end{equation*}
The corresponding velocity field $u= \frac{\partial \psi}{\partial y}$, $v=-\frac{\partial \psi}{\partial x}$ contains several distinct Fourier modes. When inserted into the quadratic convection term $(u \nabla)u$, these modes interact to generate additional Fourier components. Such nonlinear interactions cannot be absorbed into the pressure gradient. 
This guarantees that the convective parameter $\lambda_1$ influences the evolution and can be identified during training. 

For generating the training data, we fixed the convective parameter at $\lambda_1 = 1.0$ and the viscous parameter at $\lambda_2 = 10^{-1}$. We generated velocity fields for 40 time steps with step size $h_t = 10^{-1}$, which yielded stable and convergent iterations for the chosen spatial discretization.
The resulting dataset consists of complete snapshots of the velocity field, denoted by $(u_{\text{obs}},v_{\text{obs}})$, taken at all 
time intervals, with values stored at all finite difference grid points. 
These grid values are later reused in the FD-PINN loss, where the same finite difference stencils are applied to evaluate spatial and temporal derivatives.

\paragraph{FD-PINN formulation}
We employ a physics-informed neural network that outputs a latent streamfunction
$\psi$ and the pressure $p$. 
The velocity is derived from $\psi$,
\begin{equation*}
    u =  \frac{\partial \psi}{\partial y}, \qquad v = -\frac{\partial \psi}{\partial x},
\end{equation*}
so that incompressibility $\frac{\partial u}{\partial x}+\frac{\partial v}{\partial y}=0$ holds identically. The PDE coefficients
$\lambda_1$ (convection) and $\lambda_2$ (viscosity) are treated as trainable scalars
and optimized jointly with the neural network parameters.

We enforce the momentum residuals written in PDE form as
\begin{align*}
	f &= \frac{\partial u}{\partial t} + \lambda_1 \left(u \frac{\partial u}{\partial x} + v \frac{\partial u}{\partial y}\right) + \frac{\partial p}{\partial x} - \lambda_2 \left(\frac{\partial^2 u}{\partial x^2} + \frac{\partial^2 u}{\partial y^2}\right),\\
    g &=  \frac{\partial v}{\partial t} + \lambda_1 \left(u \frac{\partial v}{\partial x} + v \frac{\partial v}{\partial y}\right) + \frac{\partial p}{\partial y} - \lambda_2 \left(\frac{\partial^2 v}{\partial x^2} + \frac{\partial^2 v}{\partial y^2}\right).
\end{align*}
In the implementation, all derivatives in $f, g$ are evaluated on the periodic training grid by finite differences using the same stencils as in data generation: 
second-order central differences for space and a forward difference in time between consecutive snapshots. 
Since the neural network predicts a streamfunction, obtaining the velocity already requires one spatial derivative, which reduces the available domain by one cell at each boundary. 
The momentum residuals then involve first and second derivatives of $u$ and $v$, which introduce another layer of boundary loss. 
Consequently, residuals can only be enforced on the interior (``core'') grid, excluding two cells at each boundary in both $x$ and $y$. 
The forward difference in time similarly excludes the final snapshot. 

The training objective combines data fidelity and PDE consistency.
Let $\Omega^{h}_1 \subset \Omega^h$ denote the interior grid obtained by removing one cell at each spatial boundary,
and $\Omega^{h}_2\subset \Omega^h$ the interior obtained by removing two cells at each boundary. Let $T^h$ be all
saved time indices and $T^h_{\mathrm{core}} \subset T^h$ those for which a forward difference is
defined (final snapshot excluded). With residuals $f$ and $g$ for the $u$- and $v$-momentum equations,
the training loss is
\begin{equation}\label{eq:NS:loss}
\begin{split}
\frac{1}{N_{\mathrm{data}}} &\sum_{z \in \Omega^{h}_1 \times T^h}
  \left( \left(u(z) - u_{\text{obs}}(z)\right)^2
+ \left(v(z) - v_{\text{obs}}(z)\right)^2 \right)\\
&+ \frac{1}{N_{\mathrm{pde}}}\sum_{z \in \Omega^{h}_2 \times T^h_{\text{core}}}
  \left( f(z)^2 + g(z)^2 \right)
+ \frac{w_{\mathrm{div}}}{N_{\mathrm{div}}}
  \sum_{z \in \Omega^{h}_2 \times T^h}
  \left(\frac{\partial u}{\partial x}(z)+ \frac{\partial v}{\partial y}(z)\right)^2,
  \end{split}
\end{equation}
where $N_{\mathrm{data}} = |\Omega^{h}_1 \times T^h|$, $N_{\mathrm{pde}} = |\Omega^{h}_2 \times T^h_{\text{core}}|$, $N_{\mathrm{div}}= |\Omega^{h}_2 \times T^h|$ are the respective numbers of summands, $w_{\mathrm{div}} = 10^{-3}$, 
and derivatives are evaluated with the same finite-difference stencils as in data generation.
The last term in \eqref{eq:NS:loss}, i.e., the divergence term, is theoretically redundant, as incompressibility holds by construction, but is included for numerical stability. Further, note that pressure is only determined up to an additive constant. Nevertheless, our implementation does not impose any gauge constraint during training. Since only the pressure gradients $\frac{\partial p}{\partial x}$ and $\frac{\partial p}{\partial y}$ appear in the residuals, the recovered pressure is defined only up to a constant shift at each time level.

The neural network consisted of 9 hidden layers with 100 neurons per layer and ReLU activation functions. 
Both the neural network parameters and the PDE coefficients $\lambda_1,\lambda_2$ were treated as trainable variables. 
The FD-PINN was trained for a total of $500\,000$ iterations. 
All experiments were performed on the dataset described above, and the loss function was evaluated on the stencil-compatible interior grid at each iteration.

Although no pressure values were included in the training data, the pressure field is qualitatively well reconstructed; see \cref{Fig:NS:FDPINN} for a visual comparison between the exact and predicted pressure at an intermediate time snapshot. As discussed above, the pressure is determined only up to an additive constant at each time level. In addition, the physical parameters are identified with high accuracy: the recovered values are $\lambda_1 = 0.9546$ and $\lambda_2 = 0.0959$, corresponding to relative errors of approximately $4.54\%$ and $4.10\%$, respectively. 

\begin{figure}[h]
\centering
\newcommand{\mysubfigwidth}{0.3\textwidth}

\subfloat[True pressure]{
    \includegraphics[width=\mysubfigwidth]{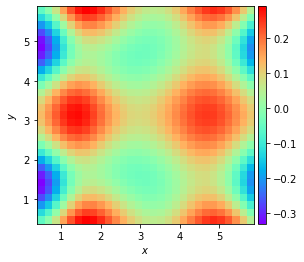}
}\hfill
\subfloat[Solution of FD-PINN]{
    \includegraphics[width=\mysubfigwidth]{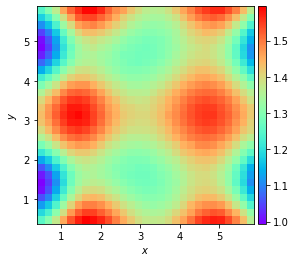}
}\hfill
\subfloat[Solution of FD-PINN on a 5 times finer grid]{
    \includegraphics[width=\mysubfigwidth]{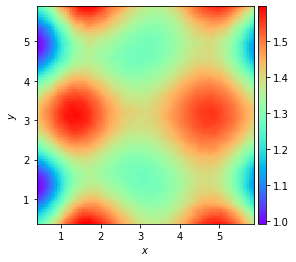}
}

\caption{Solution of the Navier-Stokes equation at an intermediate time snapshot.}
\label{Fig:NS:FDPINN}
\end{figure}

To highlight that the recovery remains robust under noise we corrupt the training data $(u_{\mathrm{obs}}, v_{\mathrm{obs}})$ by adding independent, zero-mean Gaussian noise to each component separately, with the noise standard deviation set to $1\%$ of that component’s own global standard deviation (computed on the original arrays, before adding noise). Specifically,
\[
u_{\mathrm{obs}}^{\mathrm{noisy}} \leftarrow u_{\mathrm{obs}} + \varepsilon_u,
\qquad
v_{\mathrm{obs}}^{\mathrm{noisy}} \leftarrow v_{\mathrm{obs}} + \varepsilon_v,
\]
where
\[
\varepsilon_u \stackrel{\text{i.i.d.}}{\sim} \mathcal{N}\!\big(0,\,(0.01\,\sigma_u)^2\big),
\qquad
\varepsilon_v \stackrel{\text{i.i.d.}}{\sim} \mathcal{N}\!\big(0,\,(0.01\,\sigma_v)^2\big),
\]
and $\sigma_u$ and $\sigma_v$ denote the standard deviation of $u_{\mathrm{obs}}$ and $v_{\mathrm{obs}}$, respectively. The noises added to $u_{\mathrm{obs}}$ and $v_{\mathrm{obs}}$ are statistically independent.

\begin{figure}[h]
\centering
\newcommand{\mysubfigwidth}{0.3\textwidth}

\subfloat[True pressure]{
    \includegraphics[width=\mysubfigwidth]{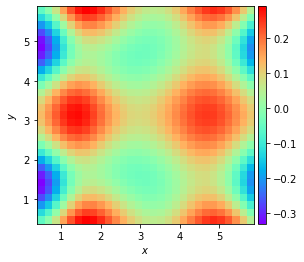}
}\hfill
\subfloat[Solution of FD-PINN]{
    \includegraphics[width=\mysubfigwidth]{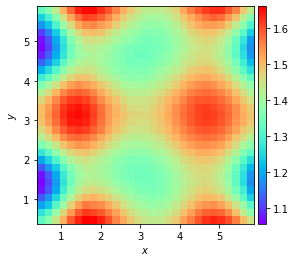}
}\hfill
\subfloat[Solution of FD-PINN on a 5 times finer grid]{
    \includegraphics[width=\mysubfigwidth]{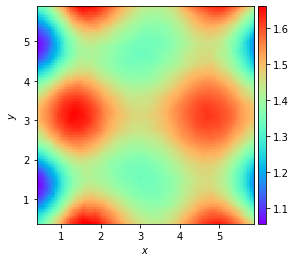}
}

\caption{Solution of the Navier-Stokes equation at an intermediate time snapshot for noisy data.}
\label{Fig:NS:FDPINN:noisy}
\end{figure}

The obtained pressure from the noisy data at the same intermediate time snapshot is depicted in \cref{Fig:NS:FDPINN:noisy} together with the true pressure for comparison reasons. Further the recovered values of the parameters are $\lambda_1 = 0.9522$ and $\lambda_2 = 0.0960$ corresponding to relative errors of approximately $4.75\%$ and $4.00\%$.

\section{Conclusion}\label{Sec:Conclusion}
We analyzed the analytical structure of AD-PINNs and FD-PINNs. 
Under the activation and width assumptions stated in our theory, namely sufficiently regular activation functions and neural networks of sufficient width and depth at least two, we proved that both formulations are ill-posed in the sense of Hadamard: whenever a minimizer exists, there exist in fact infinitely many distinct minimizers; see \cref{thm:nonuniq,thm:nonuniq:FDPINN}. This non-uniqueness persists for any finite-difference stencil and set of collocation points. Thus FD-PINNs do not resolve the ill-posedness of AD-PINNs; they simply exhibit it in a different form.

At the same time, our results (\cref{thm:equivalent:solution,prop:Equivalence:Solution:FD,prop:minimizer:equivalence}) show that whenever the underlying PDE or its finite-difference discretization admits a solution, and provided that $\alpha_\mcD=0$, the corresponding AD-PINN or FD-PINN loss admits a minimizer. 
Further, \cref{prop:Equivalence:Solution:FD,prop:minimizer:equivalence} show that the discrete and neural formulations are tightly coupled. 
In the FD-PINN case, every minimizer $u_\theta \in \mcH$ corresponds to a discrete solution $u_h$ of \eqref{eq:FDM:opt}, defined on the stencil $\Omega^h \cup \Gamma^h$, and the two agree on all stencil points.
If, in addition, the discrete PDE \eqref{PhysicalModel:Discrete} admits a solution and $\alpha_{\mcD}=0$, then any FD-PINN minimizer with zero loss coincides on the stencil with a solution of the discrete PDE. 
In particular, whenever the discrete PDE solution is unique, all zero-loss FD-PINN minimizers induce the same grid values, even though they may differ away from the stencil. 
From this perspective, the ill-posedness of FD-PINNs is confined to their off-stencil behavior: FD-PINNs are non-unique as continuous functions in $\mcH$, but, under uniqueness of the discrete PDE solution, are effectively unique on the grid.

For AD-PINNs, this grid-level uniqueness does not generally hold; see for example \cref{Example:PINN}. 
Instead, \cref{rem:overfitting} shows that the set of AD-PINN minimizers contains an unbounded affine family along which the loss remains minimal while the distance to the true PDE solution can become arbitrarily large in $L^\nu(\Omega)$, for any $\nu\in[1,\infty)$. 
Thus, even exact minimizers of the AD-PINN loss may represent arbitrarily poor approximations of the underlying PDE solution, and identical loss values do not imply comparable prediction quality. 

Taken together, these results reveal a structural contrast: both AD-PINNs and FD-PINNs are ill-posed as function-approximation problems, but FD-PINNs maintain a tight correspondence with the underlying finite-difference scheme. 
In regimes where the discrete PDE admits a unique solution and a zero-loss FD-PINN minimizer exists, all such minimizers agree on the stencil, even though they may differ away from it.
This helps to explain why FD-PINNs often behave more robustly in practice, while also clarifying the limitations of AD-PINNs.

Our numerical experiments confirm these theoretical findings: FD-PINNs can succeed in scenarios where AD-PINNs struggle, such as PDEs with complex boundary geometry. 
The additional stability observed for FD-PINNs is consistent with the fact that, 
whenever the discrete PDE admits a unique solution and the data term vanishes at 
that solution, every zero-loss FD-PINN minimizer must coincide with the discrete 
PDE solution on the finite-difference stencil.  In this regime, the stencil 
values are uniquely determined and mirror those of the classical finite-difference method.

Looking ahead, these findings suggest several directions for improving neural-network-based PDE solvers. While the AD- and FD-PINN formulations do not define a well-posed analytical problem, the FD-PINN shows that introducing additional structure can enforce uniqueness at the numerical level. Developing analogous mechanisms, through regularization, constraints, or modified loss functions that better reflect the stability of the underlying PDE, may help stabilize other PINN approaches as well. Understanding how such design choices shape the optimization landscape represents an important next step toward more reliable neural-network-based methods for PDEs.

\appendix
\section{Auxiliary Results}
\subsection{Closure of Neural Networks Under Linear Combinations}\label{sec:closure}
The set \(\mcH\) is closed under finite linear combinations provided hidden-layer widths may increase: given \(f,g\in\mcH\) and \(c_1,c_2\in\R\), there exists \(\tilde f\in\mcH\) with \(\tilde f=c_1 f+c_2 g\).
Write the parameters of \(f\) and \(g\) as
\[
\begin{aligned}
&f:\quad \varphi^f_{0}(x)=x,\quad \varphi_{i}^f=\sigma\!\big(W_{i}^f \varphi^f_{i-1}+b^f_{i}\big),\quad
f(x)=W^f_{L} \varphi^f_{L-1}+b^f_{L},\\
&g:\quad \varphi^g_{0}(x)=x,\quad \varphi_{i}^g=\sigma\!\big(W_{i}^g \varphi^g_{i-1}+b^g_{i}\big),\quad
g(x)=W^g_{L} \varphi^g_{L-1}+b^g_{L},
\end{aligned}
\]
where \(W_{i}^f\in\R^{d_i^f\times d_{i-1}^f}\), \(b_{i}^f\in\R^{d_i^f}\) and \(W_{i}^g\in\R^{d_i^g\times d_{i-1}^g}\), \(b_{i}^g\in\R^{d_i^g}\) with \(d_i^f, d_i^g\in \N\) for $i=0,\ldots,L\in\N$ and \(d_0^f=d_0^g=d_0\) and \(d_L^f=d_L^g=d_L\).
Construct a depth-\(L\) neural network \(\tilde f\) by running \(f\) and \(g\) in parallel: for \(i=1,\dots,L-1\) set
\[
\widetilde W_i=
\begin{bmatrix}
W_{i}^f & 0\\
0 & W_{i}^g
\end{bmatrix}\in\R^{\tilde d_i\times \tilde d_{i-1}},
\qquad
\widetilde b_i=
\begin{bmatrix}
b^f_{i}\\[2pt] b^g_{i}
\end{bmatrix}\in\R^{\tilde d_i} 
\]
with $\tilde d_i:=d_i^f+d_i^g$, for $i=1,\ldots,L-1$ and $\tilde d_0:=d_0$.
Since \(\sigma\) acts componentwise, \(\widetilde h^{(\ell)}=\begin{bmatrix}h_f^{(\ell)}\\ h_g^{(\ell)}\end{bmatrix}\).
Choose the final (linear) layer as
\[
\widetilde W_{L}=\begin{bmatrix} c_1\,W_{L}^f& c_2\,W_{L}^g\end{bmatrix}\in\R^{d_L\times \tilde d_{L-1}},
\qquad
\widetilde b_{L}=c_1\,b^f_{L}+c_2\,b^g_{L}\in\R^{d_L},
\]
which yields \(\widetilde f(x)=c_1 f(x)+c_2 g(x)\) for all \(x\in\R^{d_0}\).
Consequently, when the output layer is linear, the realizable set forms a vector space (under pointwise operations) up to architectural width, whereas if the output layer is nonlinear (e.g., ReLU, \(\tanh\), softmax) the set is generally not closed under addition.

Analogously one shows under non-width limitation that $\mcH_{\mathrm{reg}}$ is closed under finite linear combinations by noting that if $f,g\in\mcH_{\mathrm{reg}}$ are differentiable at $x\in\R^d$ then also $\tilde{f} = c_1 f + c_2 g$ is differentiable at $x$ and hence $\tilde{f}\in\mcH_{\mathrm{reg}}$.

\subsection{Neural Network Interpolation}
We recall the Hermite interpolation theorem from \cite{LlanasLantaron:17}:
\begin{theorem}[{\cite[Theorem 10]{LlanasLantaron:17}}]\label{thm:ll10}
Let $d\in\N$ and let $\mcA=\{A_1,\dots,A_\ell\}$ be an \emph{admissible} family of $(d-1)$-dimensional affine hyperplanes in $\R^d$, 
that is, every $d$ distinct hyperplanes in $\mcA$ intersect in exactly one point.
Define
\[
A^d := \left\{\, z\in\R^d : z=\bigcap_{A\in \tilde{A}} A \text{ for some } \tilde{A}\subset\mcA,\ |\tilde{A}|=d \,\right\}.
\]
For each $z\in A^d$, define its multiplicity as $m(z):=|\{A\in \mcA : z\in A\}|$ and target data $\zeta_z^\beta\in\R$ for all multiindices $\beta$ with $|\beta|\le m(z)-d$.
Assume $\sigma\in C^{\ell-d}(\R,\R)$ and 
\(
\sigma^{(i)}(0)\neq 0 \quad \text{for all } 0\le i\le \ell-d.
\)
Then there exists a one-hidden-layer neural network $\Phi: \R^d \to \R$ with $\binom{\ell}{d}$ hidden units such that 
\[
D^\beta \Phi(z)=\zeta_z^\beta, \qquad \text{for all } z\in A^d \text{ and } |\beta|\le m(z)-d.
\]
\end{theorem}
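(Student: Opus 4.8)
The plan is to turn the interpolation requirement into a square linear system for the output weights and to prove that the associated collocation matrix is nonsingular. First I would record the decisive counting identity. Each $d$-subset $\tilde A\subset\mcA$ meets, by admissibility, in a single point $z_{\tilde A}\in A^d$, and a point $z$ lying on $m(z)$ hyperplanes arises from exactly $\binom{m(z)}{d}$ such subsets; hence $\sum_{z\in A^d}\binom{m(z)}{d}=\binom{\ell}{d}$. On the other hand, the number of multiindices $\beta$ with $|\beta|\le m(z)-d$ equals $\binom{m(z)}{d}$, so the total number of Hermite conditions is again $\binom{\ell}{d}$. Thus prescribing all data $\{\zeta_z^\beta\}$ amounts to solving a square system of size $\binom{\ell}{d}$, matching the number of hidden units.

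Next I would write the network in ridge form $\Phi(x)=\sum_{k=1}^{H}c_k\,\sigma(\langle w_k,x\rangle+b_k)$ with $H=\binom{\ell}{d}$ and use the elementary identity $D^\beta[\sigma(\langle w,x\rangle+b)]=w^\beta\,\sigma^{(|\beta|)}(\langle w,x\rangle+b)$, where $w^\beta=\prod_i w_i^{\beta_i}$. The interpolation conditions then read $\sum_k c_k\,w_k^\beta\,\sigma^{(|\beta|)}(\langle w_k,z\rangle+b_k)=\zeta_z^\beta$ for every admissible pair $(z,\beta)$, i.e.\ $Mc=\zeta$ with $M_{(z,\beta),k}=w_k^\beta\,\sigma^{(|\beta|)}(\langle w_k,z\rangle+b_k)$. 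It remains to choose directions $w_k$ and biases $b_k$ rendering $M$ invertible, after which $c=M^{-1}\zeta$ yields the desired $\Phi$.

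For the invertibility I would use a small-weight scaling $w_k=\varepsilon v_k$ (with biases tending to $0$) and expand in $\varepsilon$. Because $\sigma^{(i)}(0)\neq 0$ for $0\le i\le\ell-d$, the Taylor coefficients of $\varepsilon\mapsto\sigma(\varepsilon\langle v_k,x\rangle)$ are nonzero multiples of the ridge powers $\langle v_k,x\rangle^m$; collecting the contributions up to order $\ell-d$ realizes, in the span of the $\binom{\ell}{d}$ hidden units, a basis of the polynomial space $\mathcal{P}^{d}_{\ell-d}$ of polynomials of degree at most $\ell-d$, whose dimension is exactly $\binom{\ell}{d}$, once the $v_k$ are chosen generically. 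Tracking the lowest-order nonvanishing terms then identifies $\det M$, up to a nonzero factor, with the determinant of the Hermite collocation matrix of $\mathcal{P}^{d}_{\ell-d}$ at the nodes $z\in A^d$ with the prescribed derivative orders; hence $M$ is invertible for all sufficiently small $\varepsilon>0$.

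The main obstacle is the nonsingularity of this limiting polynomial Hermite matrix, i.e.\ the \emph{poisedness} of Hermite interpolation on an admissible hyperplane arrangement in $\mathcal{P}^{d}_{\ell-d}$. This is precisely where admissibility is indispensable: the requirement that every $d$ hyperplanes meet in exactly one point is the genericity condition ensuring unisolvence, and it is what makes the multiplicities $m(z)$ and prescribed derivative orders compatible with a $\binom{\ell}{d}$-dimensional polynomial space (for instance in $d=2$ it reduces, at simple nodes, to the classical poised interpolation at the $\binom{\ell}{2}$ pairwise intersections of $\ell$ non-parallel lines by polynomials of degree $\le\ell-2$). I would establish poisedness either by a factorization argument—building the unique interpolant from products of the affine forms defining the $A_j$ and inducting on $\ell$—or by invoking the corresponding ideal-interpolation result. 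The hypothesis $\sigma^{(i)}(0)\neq 0$ then transfers this nondegeneracy from the polynomial matrix to $M$, while $\sigma\in C^{\ell-d}$ is exactly the regularity needed for the highest-order conditions $|\beta|\le m(z)-d\le\ell-d$ to be well defined.
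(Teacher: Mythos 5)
This statement is quoted from the literature: the paper cites it as \cite[Theorem 10]{LlanasLantaron:17} and supplies no proof of its own, so there is no in-paper argument to compare against. Judged on its own terms, your proposal has the right skeleton and the counting is correct: $\sum_{z\in A^d}\binom{m(z)}{d}=\binom{\ell}{d}$, the number of multiindices with $|\beta|\le m(z)-d$ is indeed $\binom{m(z)}{d}$, and $\dim\mathcal{P}^d_{\ell-d}=\binom{\ell}{d}$, so the problem is a square linear system and the reduction to poisedness of polynomial Hermite interpolation on the arrangement is the standard route.

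There are, however, two genuine gaps. First, the poisedness of the limiting polynomial problem --- that the Hermite conditions $D^\beta p(z)=\zeta_z^\beta$, $|\beta|\le m(z)-d$, at the intersection points of an admissible hyperplane family are unisolvent in $\mathcal{P}^d_{\ell-d}$ --- is the mathematical heart of the theorem (it is Hakopian's multivariate Hermite interpolation theorem, the confluent extension of Chung--Yao), and you defer it entirely to ``a factorization argument or the corresponding ideal-interpolation result.'' Without that, nothing is proved. Second, the perturbation step as written does not go through: with $w_k=\varepsilon v_k$ and $b_k\to 0$, after factoring $\varepsilon^{|\beta|}$ out of each row the entries tend to $v_k^\beta\,\sigma^{(|\beta|)}(0)$, which is independent of $z$, so all rows sharing the same $\beta$ coincide in the limit and $\det M$ degenerates to $0$ rather than to the polynomial collocation determinant. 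Identifying the leading nonvanishing coefficient of $\det M$ in $\varepsilon$ with that determinant requires a genuine confluent-Vandermonde computation in which different Taylor orders of $\sigma(\varepsilon\langle v_k,\cdot\rangle)$ are extracted from different units with generically chosen $v_k$. A cleaner repair is to argue via spans: since $\partial_t^m\big|_{t=0}\sigma(t\,\langle v,x\rangle)=\sigma^{(m)}(0)\langle v,x\rangle^m$ and $\sigma^{(m)}(0)\neq 0$, the closed span of the ridge functions (in $C^{\ell-d}$ on a compact set) contains $\mathcal{P}^d_{\ell-d}$; poisedness makes the evaluation map $f\mapsto(D^\beta f(z))_{(z,\beta)}$ surjective onto $\R^{\binom{\ell}{d}}$ on that span, and one then selects $\binom{\ell}{d}$ ridge functions whose images form a basis.
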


To use \cref{thm:ll10} for our purposes we need the following result.
\begin{lemma}[Admissible hyperplanes with prescribed multiplicities]\label{Lem:AdmissiblePlanes}
Let $d\in\N$, $N\ge 1$, and $r\ge 0$. For any set of distinct points
$\{z_i\}_{i=1}^N\subset\R^d$ there exist $\ell=(d+r)N$ affine
$(d-1)$-dimensional hyperplanes $A_1,\dots,A_\ell\subset\R^d$ such that
\begin{enumerate}
\item[(i)]\label{Lem:AdmissiblePlanes:i} $z_i$ lies on exactly $d+r$ of the hyperplanes (hence its multiplicity is $m(z_i)=d+r$) for every $i=1,\dots,N$;
\item[(ii)]\label{Lem:AdmissiblePlanes:ii} the family $\{A_j\}_{j=1}^\ell$ is admissible, i.e., for every $I\subset\{1,\dots,\ell\}$ with $|I|=d$ one has $\bigl|\bigcap_{j\in I} A_j\bigr|=1$.
\end{enumerate}
\end{lemma}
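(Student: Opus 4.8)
The plan is to realize the normals of the hyperplanes as points on the moment curve, which forces general position automatically, and then to choose the offsets so that each $z_i$ is hit by exactly $d+r$ of the hyperplanes.

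First I would fix the \emph{moment curve} $\gamma:\R\to\R^d$, $\gamma(t)=(1,t,t^2,\dots,t^{d-1})$, and recall that for any $d$ pairwise distinct parameters the vectors $\gamma(t_{j_1}),\dots,\gamma(t_{j_d})$ are linearly independent, since the associated matrix is a Vandermonde matrix with determinant $\prod_{p<q}(t_{j_q}-t_{j_p})\neq 0$. I would use these vectors as normals $n_j:=\gamma(t_j)$ and set $A_j=\{x:\langle n_j,x\rangle=c_j\}$. The point of this choice is that admissibility (ii) becomes automatic and completely independent of the offsets $c_j$: any $d$ of the hyperplanes have linearly independent normals, so the corresponding $d\times d$ linear system has an invertible coefficient matrix and hence exactly one solution, giving $\bigl|\bigcap_{j\in I}A_j\bigr|=1$ for every $I$ with $|I|=d$. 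Distinctness of the parameters also makes the normals (and hence the hyperplanes) pairwise distinct, since $\gamma$ is injective.

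The remaining work is to arrange the exact multiplicities in (i). I would partition $\{1,\dots,\ell\}$ into $N$ blocks $G_1,\dots,G_N$ of size $d+r$, assign block $G_i$ to the point $z_i$, and for $j\in G_i$ set the offset $c_j:=\langle n_j,z_i\rangle$ so that $z_i\in A_j$; this already yields $m(z_i)\ge d+r$. To obtain equality I must rule out every accidental incidence, i.e.\ for $j\in G_k$ with $k\neq i$ I need $\langle n_j,z_i\rangle\neq\langle n_j,z_k\rangle$, equivalently $\langle n_j,z_i-z_k\rangle\neq 0$. The key observation is that $t\mapsto\langle\gamma(t),z_i-z_k\rangle$ is a polynomial of degree at most $d-1$ whose coefficient vector is $z_i-z_k\neq 0$, so it is not identically zero and has at most $d-1$ real roots. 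Collecting these roots over all ordered pairs $(i,k)$ with $i\neq k$ produces a finite exceptional set $B\subset\R$, and I would then choose the $\ell=(d+r)N$ parameters $t_1,\dots,t_\ell$ to be pairwise distinct and to lie in $\R\setminus B$, which is possible because $B$ is finite while $\R$ is infinite.

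With this choice both properties follow at once: distinctness of the $t_j$ gives (ii) via the Vandermonde argument above, while $t_j\notin B$ eliminates every off-block incidence, so $z_i$ lies on precisely the $d+r$ hyperplanes of its own block and $m(z_i)=d+r$, proving (i). The only genuinely delicate point is securing \emph{exact} multiplicity rather than the trivial lower bound; this is exactly where the finiteness of $B$ enters, and it rests on the degree bound $d-1$ together with the non-degeneracy $z_i\neq z_k$. Everything else is bookkeeping.
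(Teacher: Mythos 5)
Your proposal follows essentially the same route as the paper's proof: moment-curve normals $v_j=(1,t_j,\dots,t_j^{d-1})^\top$ with pairwise distinct parameters, a partition of the $\ell$ indices into $N$ blocks of size $d+r$, offsets chosen so that each $z_i$ lies on the hyperplanes of its own block, and admissibility via the Vandermonde determinant. The one place where you go beyond the paper is in securing the \emph{exact} multiplicity in (i): the paper simply asserts that ``property (i) holds by construction,'' which only establishes $m(z_i)\ge d+r$ and does not rule out accidental incidences $z_i\in A_j$ for $j$ in another point's block. Your observation that $t\mapsto\langle\gamma(t),z_i-z_k\rangle$ is a nonzero polynomial of degree at most $d-1$, so that the bad parameters form a finite set $B$ which can be avoided when choosing the $t_j$, closes this gap cleanly and is a genuine improvement in rigor over the published argument. (For the downstream application via the Hermite interpolation theorem, an accidentally larger multiplicity would not be fatal, but the lemma as stated claims exact multiplicity, so your extra step is the right thing to do.)
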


\begin{proof}
For $i=1, \ldots,\ell$ we choose the vectors $v_i = \begin{bmatrix}
1,t_i, t_i^2, \ldots, t_i^{d-1}
\end{bmatrix}^\top$ with distinct $t_i$, i.e., $t_i\not=t_j$ for $i\neq j$. Then any $d$ of these vectors are linearly independent, as they would form a Vandermonde matrix. 
Split the index set $\{1,\dots,\ell\}$ into $N$ disjoint blocks
\[
I_i := \{(i-1)(d+r)+1,\dots,i(d+r)\}, \qquad i=1,\dots,N.
\]
For each $i\in\{1,\ldots,N\}$ and each $j\in I_i$, define
\[
A_j := \{\,\nnew{z}\in\R^d : v_j^\top z = b_j\,\},
\quad\text{where } b_j := v_j^\top z_i.
\]
Then $z_i\in A_j$ for all $j\in I_i$ and property (i) holds by construction.
Now take any subset $I = \{i_1,\ldots,i_d\} \subset \{1,\ldots,\ell\}$. Then the intersection
\(
\bigcap_{j=1}^d A_{i_j}
\)
is the solution of the $d\times d$ linear system $V_I z = b_I$ with $V_I = \begin{bmatrix}
v_{i_1}, \hdots, v_{i_d}
\end{bmatrix}^\top$ and $b_I =\begin{bmatrix}
b_{i_1},\hdots , b_{i_d}
\end{bmatrix}^\top.$ By construction $V_I$ is a Vandermonde matrix and hence invertible, yielding a unique solution $z = \nnew{V}_I^{-1} b_I$. Hence every $d$-tuple of hyperplanes meets in exactly one point, which is precisely the admissibility condition.
\end{proof}

\begin{corollary}[Hermite interpolation with smooth activation]\label{cor:pointwise-bc-vector}
Let $\Omega\subset\R^d$ with boundary $\partial\Omega$ and $N_\mcF,N_\mcB\in\N$. 
Fix finite sets of distinct points $\Omega^h=\{z_\mcF^i\}_{i=1}^{N_\mcF}\subset \Omega$ and $\Gamma^h=\{z_\mcB^j\}_{j=1}^{N_\mcB}\subset\partial\Omega$, and for integers $r_\mcF,r_\mcB\ge 0$ set $\ell:= N_\mcF(d+r_\mcF) + N_\mcB(d+r_\mcB)$.
Let $\sigma\in C^{\ell-d}(\R,\R)$ and assume there exists $a\in\R$ such that $\sigma^{(k)}(a)\neq 0$ for all $0\le k\le \ell-d$.
Then, for any prescribed vectors 
$\mu_{i,\beta}\in\R^c$ and $\eta_{j,\gamma}\in\R^c$, there exists a one-hidden-layer neural network $\Phi: \R^d \to \R^c$ with $c\binom{\ell}{d}$ hidden units such that
\begin{align*}
D^\beta \Phi(z_\mcF^i)&=\mu_{i,\beta}\quad\text{for all }i=1,\dots,N_\mcF,\;|\beta|\le r_\mcF,\\
D^\gamma \Phi(z_\mcB^j)&=\eta_{j,\gamma}\quad\text{for all }j=1,\dots,N_\mcB,\;|\gamma|\le r_\mcB.
\end{align*}

\begin{proof}
Utilizing \cref{Lem:AdmissiblePlanes} we construct an admissible family of affine hyperplanes $\mcA$ in $\R^d$ such that each 
$z_\mcF^i$, $i=1,\ldots,N_\mcF$, lies on exactly $d+r_\mcF$ hyperplanes (so $m(z_\mcF^i)=d+r_\mcF$) and each $z_\mcB^j$, $j=1,\ldots,N_\mcB$, lies on exactly $d+r_\mcB$ hyperplanes (so $m(z_\mcB^j)=d+r_\mcB$).
Since $\mcA$ is an admissible family of affine hyperplanes and the multiplicity for every $z_\mcF^i$ and $z_\mcB^j$ is larger than $d$, we have that $z_\mcF^i,z_\mcB^j\in A^d$ for all $i=1,\ldots,N_\mcF$ and $j=1,\ldots,N_\mcB$, where $A^d$ is defined as in \cref{thm:ll10}.
Apply \cref{thm:ll10} with $\ell=|\mcA|= N_\mcF(d+r_\mcF)+N_\mcB(d+r_\mcB)$, activation $\tilde\sigma(t)=\sigma(t+a) \in C^{\ell-d}(\R,\R)$, and target vectors 
$\zeta_{z_\mcF^i}^\beta=\mu_{i,\beta}$, $\zeta_{z_\mcB^j}^\gamma=\eta_{j,\gamma}$ coordinate-wise\nnew{,} yielding $c$ one-hidden-layer neural networks $\Phi_k : \R^d \to \R$, $k = 1,\ldots, c$ each consisting of \(\binom{\ell}{d}\) hidden units. Stack them to obtain $\Phi:\R^d\to\R^c$ with $c\binom{\ell}{d}$ hidden units. Since $\Phi_k$, $k = 1,\ldots,c$, satisfies the prescribed conditions, the vector-valued map $\Phi=(\Phi_1,\ldots,\Phi_c)$ inherits these properties component-wise, completing the proof.
\end{proof}
\end{corollary}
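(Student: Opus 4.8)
The plan is to reduce the vector-valued Hermite interpolation problem to the scalar case treated in \cref{thm:ll10} and to supply the geometric input that theorem requires, namely an admissible family of hyperplanes in which each collocation point carries the correct multiplicity. Since \cref{thm:ll10} encodes the interpolation order at a point $z$ through the quantity $m(z)-d$, and since the prescribed orders differ between interior and boundary nodes ($r_\mcF$ versus $r_\mcB$), I would first build a single admissible family $\mcA$ with $|\mcA|=\ell=N_\mcF(d+r_\mcF)+N_\mcB(d+r_\mcB)$ hyperplanes such that $m(z_\mcF^i)=d+r_\mcF$ and $m(z_\mcB^j)=d+r_\mcB$. Then $m(z)-d$ equals exactly $r_\mcF$ at interior nodes and $r_\mcB$ at boundary nodes, matching the orders of the prescribed derivatives.

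For the geometry I would adapt the construction in the proof of \cref{Lem:AdmissiblePlanes}. Choose Vandermonde normal vectors $v_j=(1,t_j,\dots,t_j^{d-1})^\top$ with pairwise distinct $t_j$, so that any $d$ of them are linearly independent and admissibility (every $d$-tuple of hyperplanes meeting in a unique point) holds automatically. The only change from the stated lemma is that the index blocks attached to the $N_\mcF+N_\mcB$ points now come in two sizes, $d+r_\mcF$ for each interior point and $d+r_\mcB$ for each boundary point; choosing the offsets $b_j=v_j^\top z$ for the node $z$ owning block $j$ forces each node onto exactly its own hyperplanes and produces the desired multiplicities. Because every multiplicity exceeds $d$, each $z_\mcF^i$ and $z_\mcB^j$ lies in the intersection set $A^d$ of \cref{thm:ll10}, so the interpolation data are legitimate inputs to that theorem.

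Next I would reconcile the activation hypothesis. \cref{thm:ll10} requires $\sigma^{(i)}(0)\neq 0$, whereas here the nonvanishing derivatives sit at a general point $a$. Passing to the shifted activation $\tilde\sigma(t):=\sigma(t+a)$ restores the hypothesis at $0$, and a one-hidden-layer network built from $\tilde\sigma$ is the same function as one built from $\sigma$ with $a$ absorbed into each hidden bias, so it stays in the same depth-$2$ class with the same width. Applying \cref{thm:ll10} with $\tilde\sigma$ and target data $\zeta_{z_\mcF^i}^\beta=\mu_{i,\beta}$, $\zeta_{z_\mcB^j}^\gamma=\eta_{j,\gamma}$ taken one output coordinate at a time then yields $c$ scalar networks $\Phi_1,\dots,\Phi_c$, each with $\binom{\ell}{d}$ hidden units, matching the prescribed derivatives in that coordinate. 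Stacking them into $\Phi=(\Phi_1,\dots,\Phi_c)$ by running the scalar networks in parallel and concatenating hidden units gives a single one-hidden-layer network with $c\binom{\ell}{d}$ units and a block-diagonal output layer, whence the interpolation identities hold componentwise.

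The main obstacle I anticipate is not any single estimate but a piece of bookkeeping: the stated \cref{Lem:AdmissiblePlanes} delivers only a \emph{uniform} multiplicity, so the genuine work is verifying that its Vandermonde construction extends verbatim to heterogeneous block sizes while preserving admissibility, and that the shift-and-stack operations keep the result inside the depth-$2$ class with exactly $c\binom{\ell}{d}$ hidden units, without inflating depth or width.
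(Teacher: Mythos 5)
Your proposal is correct and follows essentially the same route as the paper: construct an admissible hyperplane family with multiplicity $d+r_\mcF$ at interior nodes and $d+r_\mcB$ at boundary nodes via the Vandermonde construction, shift the activation so that the nonvanishing-derivative hypothesis holds at $0$, apply \cref{thm:ll10} coordinate-wise, and stack the $c$ scalar networks. Your explicit attention to the fact that \cref{Lem:AdmissiblePlanes} as stated only covers a uniform multiplicity, and that its proof extends verbatim to heterogeneous block sizes, is a point the paper's proof passes over silently, so your version is if anything slightly more careful.
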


Here we collect some useful results needed to proof \cref{thm:nonuniq,thm:nonuniq:FDPINN}.
\begin{lemma}[Special interpolation with smooth activation]\label{Lem:Null:Interpolation}
Let $\Omega\subset\R^d$ with boundary $\partial \Omega$ and $L\geq 2$. Fix finite sets $\Omega^h=\{z_\mcF^i\}_{i=1}^{N_\mcF}\subset\Omega$ and 
$\Gamma^h=\{z_\mcB^j\}_{j=1}^{N_\mcB}\subset\partial\Omega$, integers $r_\mcF,r_\mcB\ge 0$, and a target $v\in\R^c$.
Assume the activation $\sigma\in C^{\ell}(\R,\R)$ with $\ell:=N_\mcF (r_\mcF+1)+N_\mcB (r_\mcB+1)$ 
satisfies $\sigma^{(k)}(a)\neq 0$ for $0\le k\le \ell$ and some $a\in\R$. If $L>2$, then we additionally assume that $\sigma$ is strictly monotone. 
Choose $v_*\in\R^d$ such that the projections 
$t_i:=v_*\!\cdot z_\mcF^i\in\R$ and $s_j:=v_*\!\cdot z_\mcB^j\in\R$ are all pairwise distinct for $i=1,\ldots,N_\mcF$ and $j=1,\ldots,N_\mcB$. Then for any $z_0\in{\Omega}\setminus (\Omega^h\cup \Gamma^h)$ such that $v_*\!\cdot z_0 \notin \{t_1,\dots,t_n,s_1,\dots,s_m\}$, there exists a depth-$L$ neural network 
$\Phi:\R^d\to\R^c$ such that 
\begin{align*}
D^\beta \Phi(z_\mcF^i)&=0\quad\text{for all }i=1,\dots,N_\mcF,\;|\beta|\le r_\mcF,\\ 
D^\gamma \Phi(z_\mcB^j)&=0\quad \text{for all }i=1,\dots,N_\mcB,\;|\beta|\le r_\mcB,\qquad
\text{and }\ \Phi(z_0)=v.
\end{align*}
\end{lemma}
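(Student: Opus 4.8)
The plan is to reduce the $d$-dimensional interpolation problem to a one-dimensional one along the projection direction $v_*$, and then realize the resulting $1$D profile as a neural network of the required depth. Concretely, I would look for $\Phi$ of the form $\Phi(x) = v\,\psi(v_*\cdot x)$ for a scalar function $\psi:\R\to\R$ to be constructed. By the chain rule, for any multiindex $\beta$ one has $D^\beta\Phi(x) = v\,\psi^{(|\beta|)}(v_*\cdot x)\prod_{k}(v_*)_k^{\beta_k}$, so the vanishing of all $D^\beta\Phi$ with $|\beta|\le r_\mcF$ at $z_\mcF^i$ is guaranteed as soon as $\psi^{(m)}(t_i)=0$ for all $0\le m\le r_\mcF$, where $t_i=v_*\cdot z_\mcF^i$; similarly the boundary conditions reduce to $\psi^{(m)}(s_j)=0$ for $0\le m\le r_\mcB$ with $s_j=v_*\cdot z_\mcB^j$, and $\Phi(z_0)=v$ reduces to $\psi(t_0)=1$ with $t_0=v_*\cdot z_0$. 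By hypothesis the reals $t_1,\dots,t_{N_\mcF},s_1,\dots,s_{N_\mcB},t_0$ are pairwise distinct, which is exactly the data needed for a $1$D Hermite interpolation problem.

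For $L=2$ I would invoke \cref{thm:ll10} in dimension $d=1$, where the ``hyperplanes'' are points and admissibility is automatic. Assign multiplicity $r_\mcF+1$ to each $t_i$, multiplicity $r_\mcB+1$ to each $s_j$, and multiplicity $1$ to $t_0$, prescribing all derivative targets to be $0$ except the value at $t_0$, set to $1$. The total number of hyperplanes is then $\tilde\ell := N_\mcF(r_\mcF+1)+N_\mcB(r_\mcB+1)+1 = \ell+1$, so \cref{thm:ll10} requires $\sigma\in C^{\tilde\ell-1}=C^{\ell}$ with $\sigma^{(k)}(a)\neq0$ for $0\le k\le\ell$ (after the harmless shift $\tilde\sigma(t)=\sigma(t+a)$), which is precisely our standing assumption. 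This yields a one-hidden-layer $\psi:\R\to\R$ with the prescribed derivative data; setting $\Phi(x)=v\,\psi(v_*\cdot x)$, with $v_*$ absorbed into the first-layer weights and $v$ into the linear output layer, produces the desired depth-$2$ network.

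For $L>2$ I would prepend $L-2$ width-one layers $g_k(t)=\sigma(a_kt+b_k)$, $a_k\neq0$, and set $G:=g_{L-2}\circ\cdots\circ g_1$. Since $\sigma$ is strictly monotone in this regime, each $g_k$ is a smooth injection, so $G$ is a strictly monotone diffeomorphism onto its image and the values $G(t_i),G(s_j),G(t_0)$ remain pairwise distinct reals. I would then apply the $L=2$ construction on these images to obtain a one-hidden-layer profile $\psi_2$ with $\psi_2^{(m)}(G(t_i))=0$ ($m\le r_\mcF$), $\psi_2^{(m)}(G(s_j))=0$ ($m\le r_\mcB$), and $\psi_2(G(t_0))=1$, and set $\psi:=\psi_2\circ G$. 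The composition $\psi$ is then realized by a depth-$L$ network (the $L-2$ monotone layers, the hidden layer of $\psi_2$, and the linear output), with $v_*$ absorbed into $g_1$ and $v$ into the output.

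The main obstacle, and the only genuinely non-routine step, is verifying that precomposition with $G$ preserves all the vanishing-derivative conditions. This follows from Faà di Bruno's formula: for $m\ge1$ every term of $(\psi_2\circ G)^{(m)}(t_i)$ carries a factor $\psi_2^{(k)}(G(t_i))$ with $1\le k\le m$, and since $\psi_2^{(k)}(G(t_i))=0$ for all $1\le k\le m\le r_\mcF$, the whole expression vanishes; the order-$0$ case is $\psi(t_i)=\psi_2(G(t_i))=0$, and the argument is identical at the $s_j$. Injectivity of $G$, which guarantees distinctness of the images needed to apply the $1$D interpolation, is exactly where strict monotonicity of $\sigma$ enters, mirroring the role of the identity-layer insertion used for ReLU networks in \cref{Sec:NN}. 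The remaining bookkeeping --- that $\psi_2\in C^\ell$ as a finite combination of shifts of $\sigma$, hence $\psi=\psi_2\circ G\in C^\ell$ so that all derivatives up to order $\max(r_\mcF,r_\mcB)\le\ell$ are classical --- is routine, and I would only note it briefly.
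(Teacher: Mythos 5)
Your proof is correct, and its core reduction is the same as the paper's: project onto the line spanned by $v_*$, solve a one-dimensional Hermite interpolation problem with zero derivative data at the $t_i,s_j$ and value $1$ at $t_0$, and tensor the resulting scalar profile with $v$. The paper obtains the 1D interpolant directly from the one-dimensional Hermite result \cite[Theorem 6]{LlanasLantaron:17} rather than from \cref{thm:ll10} specialized to $d=1$; your route through \cref{thm:ll10} implicitly requires repeated ``hyperplanes'' (points) to encode multiplicities, which the set-based formulation does not literally allow, so you should cite the genuine 1D Hermite theorem instead --- a cosmetic fix, since the regularity and non-degeneracy budget ($\sigma\in C^{\ell}$, $\sigma^{(k)}(a)\neq 0$ for $0\le k\le\ell$, matching $\ell+1$ total conditions) comes out identically.

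Where you genuinely diverge is the depth extension for $L>2$. You \emph{pre}-compose with $L-2$ strictly monotone width-one layers $G$ and interpolate at the distorted points $G(t_i),G(s_j),G(t_0)$; strict monotonicity is used to keep these images distinct, and Fa\`a di Bruno kills the derivatives because every term of $(\psi_2\circ G)^{(m)}$ carries a factor $\psi_2^{(k)}(G(t_i))$ with $1\le k\le m$. The paper instead \emph{post}-composes: it interpolates at the original points, then appends $L-2$ copies of $g(t)=\sigma(at)-\sigma(0)$, using strict monotonicity to ensure $0$ is the unique root of $g$ (so the value at $z_0$ survives and can be rescaled by a constant $\lambda$ in the output layer), and Fa\`a di Bruno kills the derivatives because every term of $D^\beta(g\circ\Psi)$ carries a factor $D^{\beta_j}\Psi(z_\mcF^i)$ with $1\le|\beta_j|\le|\beta|$. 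Both are valid mirror images of the same idea; yours avoids the final rescaling but makes the interpolation data depend on the prepended layers, while the paper's keeps the interpolation step independent of the depth and isolates the depth extension in the output half of the network (which also parallels its identity-layer trick for ReLU networks in \cref{Sec:NN}). One small wording caveat: $G$ need not be a diffeomorphism (strict monotonicity of a $C^1$ function does not force $G'\neq 0$ everywhere), but you only use injectivity and $C^{\ell}$-smoothness of $G$, so nothing breaks.
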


\begin{proof}
Define the 1D point set $\mathcal T:=\{t_i\}_{i=1}^n\cup\{s_j\}_{j=1}^m$. Let $a\in\R$ be such that $\sigma^{(k)}(a)\neq 0$ for all $0\leq k\leq \ell$ and define $\tilde{\sigma}(t)=\sigma(t+a)$. Since $\tilde{\sigma}\in C^{\ell}(\R,\R)$ by \cite[Theorem 6]{LlanasLantaron:17} there exists a one-hidden-layer neural network 
$\psi:\R\to\R$ with $\ell+1$ hidden units such that 
\[
\psi^{(k)}(t_i)=0\ \ (i=1,\ldots,N_\mcF,\  0\le k\le r_\mcF),\ \psi^{(k)}(s_j)=0\ \ (j=1,\ldots,N_\mcB,\ 0\le k\le r_\mcB),
\]
and $\psi(t_0)=1$ at a point $t_0\notin\mathcal T$. 
Setting $\Psi(z):=\psi(v_*\!\cdot z)$ yields a one-hidden-layer neural network $\Psi:\R^d \to \R$ such that 
\begin{equation}\label{Eq:Dif:Properties}
\begin{split}
D^\beta \Psi(z_\mcF^i)&=0\quad \text{for all } i=1,\ldots,N_\mcF,\ |\beta|\le r_\mcF,\\
D^\gamma \Psi(z_\mcB^j)&=0\quad \text{for all } j=1,\ldots,N_\mcB,\ |\gamma|\le r_\mcB,\qquad
\text{and }\ \Psi(z_0)=1,
\end{split}
\end{equation}
because $\psi^{(k)}(t_i)=0$ for $0\le k\le r_\mcF$ and $\psi^{(k)}(s_j)=0$ for $0\le k\le r_\mcB$ for all $i,j$.

To construct a depth-$L$ neural network with the same properties we consider a 1D one-hidden-layer neural network $g:\R \to \R$ with the same activation $\sigma$ that satisfies $g(0)=0$ and $g(1)\not=0$. A concrete choice might be $g(t):=\sigma(a t) - \sigma(0)$ with any $a\neq 0$. Since $\sigma$ is strictly monotone, $t=0$ is the only root of $g$. The depth-$L$ neural network is then a composition of $L-1$ functions, i.e.,
\[
\Psi_L:= g \circ g \circ \cdots \circ g \circ \Psi
\]
where $L-2$ copies of $g$ are used. Note that such $L-1$ one-hidden-layer neural networks can be represented by a $(L-1)$-hidden-layer neural network; cf, \cref{fig:hL}. By the Fa\`a di Bruno formula we have for each $z_\mcF^i$, $z_\mcB^j$, and all $|\beta|\le r_\mcF$ and $|\gamma|\le r_\mcB$,
$D^\beta \Psi_L(z_\mcF^i)=0$ and  $D^\gamma \Psi_L(z_\mcB^j)=0$, because \eqref{Eq:Dif:Properties} holds.

\begin{figure}[htb]
\begin{center}

\begin{tikzpicture}[xscale=0.9, yscale=1.3, align=center]
  \tikzstyle{node}=[circle,draw=black]
  \tikzstyle{edge}=[->]
  
  \node[node] (1-1) at (-3, 1) {};
  \node            at (-3,  0) {$\vdots$}; 
  \node[node] (1-2) at (-3,-1) { };
  \draw[->, thick, shorten >=2pt] (-4,1) -- (1-1);
  \node[anchor=east] at (-4, 1) {Input $z_1$};
  \draw[->, thick, shorten >=2pt] (-4,-1) -- (1-2);
  \node[anchor=east] at (-4, -1) {Input $z_d$};
  
  \node[node] (2-1) at (-1, 2) {};
  \node[node] (2-2) at (-1, 1) {};
  \node            at (-1,  0) {$\vdots$}; 
  \node[node] (2-3) at (-1,-1) {};
  \node[node] (2-4) at (-1,-2) {};
  
  \node[node] (3-1) at ( 1, 1) {};
  \node[node] (3-2) at ( 1,-1) {};
  
  \node[node] (4-1) at ( 3, 1) {};
  \node[node] (4-2) at ( 3,-1) {};
    
  \node            at (4,  1) {$\hdots$}; 
  \node            at (4,  -1) {$\hdots$}; 
  
  \node[node] (5-1) at ( 5, 1) {};
  \node[node] (5-2) at ( 5,-1) {};
  
  \node[node] (6-1) at ( 6,0) {};
  \draw[->, thick, shorten <=2pt] (6-1) -- (7,0);
  \node[anchor=west] at (7, 0) {$\Psi_L(z)$};
  
  \foreach \x in {1,...,1}{
    \foreach \y in {1,...,2}{
    \pgfmathtruncatemacro{\xy}{(\x-1) * 4 + \y}
    {\draw[edge] (1-\x) -- node[ pos = 0.5]{\footnotesize $w_{\xy}$} (2-\y);}
	}
	}
  \foreach \x in {1,...,1}
    \foreach \y in {3,...,3}
    {\draw[edge] (1-\x) -- node[ pos = 0.7]{\footnotesize $w_{N-1}$} (2-\y);}
  \foreach \x in {1,...,1}
    \foreach \y in {4,...,4}
    {\draw[edge] (1-\x) -- node[ pos = 0.6]{\footnotesize $w_{N}$} (2-\y);}

  \foreach \x in {2,...,2}{
    \foreach \y in {1,...,4}{
    {\draw[edge] (1-\x) --  (2-\y);}
	}
	}
	
  \foreach \x in {1,...,2}
    \foreach \y in {1,...,1}
      {\draw[edge] (2-\x) to node[ pos = 0.4]{\footnotesize $a c_\x$} (3-\y);}
  \foreach \x in {3,...,3}
    \foreach \y in {1,...,1}
      {\draw[edge] (2-\x) to node[ pos = 0.4]{\footnotesize $a c_{N-1}$} (3-\y);}
  \foreach \x in {4,...,4}
    \foreach \y in {1,...,1}
      {\draw[edge] (2-\x) to node[ pos = 0.4]{\footnotesize $a c_{N}$} (3-\y);}
       
  \foreach \x in {1,...,4}
    \foreach \y in {2}
      {\draw[edge] (2-\x) to node[ pos = 0.4]{\footnotesize $0$} (3-\y);}

  \foreach \x in {1}
    \foreach \y in {1}
      {\draw[edge] (3-\x) to node[ pos = 0.4]{\footnotesize $a$}(4-\y);} 
  \foreach \x in {2}
    \foreach \y in {1}
      {\draw[edge] (3-\x) to node[pos = 0.4]{\footnotesize $-a$}(4-\y);}
  \foreach \x in {1,...,2}
    \foreach \y in {2}
      {\draw[edge] (3-\x) to node[ pos = 0.4]{\footnotesize $0$}(4-\y);} 
      
  \foreach \x in {1,...,1}
    \foreach \y in {1,...,1}
      {\draw[edge] (5-\x) to node[ pos = 0.6]{\footnotesize $1$} (6-\y);} 
  \foreach \x in {2,...,2}
    \foreach \y in {1,...,1}
      {\draw[edge] (5-\x) to node[ pos = 0.6]{\footnotesize $-1$} (6-\y);}		
		
 \end{tikzpicture}
 
\end{center}
\caption{Illustration of the neural network $\Psi_L:\R^d \to \R$ with input $z=(z_1,\ldots,z_d)^\top\in\R^d$}\label{fig:hL}
\end{figure}
Finally we set the vector-valued neural network as
\[
\Phi(z):= \lambda \Psi_L(z)v\in\R^c
\]
where $\lambda = (g \circ g \circ \cdots \circ g(1))^{-1}$ is well defined, since 0 is the unique root of $g$, so that $\Phi(z_0)=v$.
Thus $\Phi$ is a depth-$L$ neural network and has the required properties.
\end{proof}

\begin{remark}[Choice of projection direction]\label{rem:projection-direction}
In \cref{Lem:Null:Interpolation} we used a direction $v_*\in\R^d$ such that the projections 
$v_*\cdot z_\mcF^i$ and $v_*\cdot z_\mcB^j$ are pairwise distinct. 
To see that such a choice is always possible, fix two distinct points 
$p_i,p_j\in\{z_\mcF^1,\dots,z_\mcF^{N_\mcF},z_\mcB^1,\dots,z_\mcB^{N_\mcB}\}$. 
They collide under projection precisely when $v\cdot(p_i-p_j)=0$, i.e., when 
$v$ lies in the hyperplane
\[
A_{ij} := \{ v\in\R^d : v\cdot(p_i-p_j)=0 \}, \qquad i < j = 2,\ldots, N_\mcF+N_\mcB.
\]
Thus the set of ``bad directions'' is the finite union 
$\mcA = \bigcup_{i<j} A_{ij}$. 
Each $A_{ij}$ is a codimension-one hyperplane through the origin, and therefore 
$\mcA$ has Lebesgue measure zero in $\R^d$. 
Consequently, admissible directions form a dense, full-measure subset of $\R^d$, 
and almost every $v_*$ ensures that all projections are distinct.
\end{remark}

\begin{remark}\label{Rem:widthInterpolationNN}
The depth-$L$ neural network $\Phi$ of \cref{Lem:Null:Interpolation} has the following number of neurons: for $L=2$ we have $d_0=d$, $d_1 = N_\mcF (r_\mcF+1)+ N_\mcB (r_\mcB+1)+1$, $d_2 = c$, while for $L\ge 3$ we get $d_0=d$, $d_1=N_\mcF (r_\mcF+1)+ N_\mcB (r_\mcB+1)+1$, $d_\ell = 2$ for $\ell = 2, \ldots, L-1$, and $d_L=c$ yielding a neural network in $\mcH^M$ with 
\[
M = \sum_{\ell=0}^{L-1} d_{\ell+1}\,(d_\ell + 1) = \begin{cases}
d_1(d+1+c) +c & \text{if $L=2$};\\
d_1(d+3) + 6L+ 3c - 18 & \text{if $L\ge 3$}.
\end{cases}
\]
Note that we count all weights and biases even if they are 0, which is due to the assumption that we consider fully-connected feedforward neural networks.
\end{remark}

\begin{remark}
In the specific setting of \cref{Lem:Null:Interpolation}, the interpolation conditions are structurally simple, which allows us to construct the neural network more directly then in the setting of \cref{cor:pointwise-bc-vector}. Therefore \cref{thm:ll10} is not strictly required, although it provides a convenient general framework that also covers the present case, which would lead to a wider neural network than required for these specialized interpolation conditions.
\end{remark}

\begin{lemma}[ReLU interpolation neural network]\label{Lem:ReLU:Null:interpolation}
Let $\Omega\subset\R^d$ with boundary $\partial \Omega$. Fix finite sets $\Omega^h=\{z_\mcF^i\}_{i=1}^{N_\mcF}\subset\Omega$ and 
$\Gamma^h=\{z_\mcB^j\}_{j=1}^{N_\mcB}\subset\partial\Omega$. Then for any $z_0\in {\Omega}\setminus\left(\Omega^h\cup \Gamma^h \right)$, any target vector $v\in\R^c$ and any $L \ge \lceil \log_2(d+1)  \rceil + 1$ there exists a ReLU neural network $\Phi:\R^d\to\R^c$ of depth $L$ such that $\Phi\equiv 0$ on a neighborhood of each $z_\mcF^i$ in $\Omega$ and on a neighborhood of each $z_\mcB^j$ relative to $\Omega$,
and $\Phi(z_0)=v$. In particular, for every multiindex
$\beta\geq0$ and every order $|\beta|\ge 0$,
\[
D^\beta \Phi(z_\mcF^i)=0\quad i=1,\ldots,N_\mcF, \quad\text{and}\quad D^\beta \Phi(z_\mcB^j)=0 \quad j=1,\ldots,N_\mcB.
\]
\end{lemma}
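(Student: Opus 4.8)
The plan is to reduce the vector-valued statement to the construction of a single scalar ReLU ``bump'' and then to realize that bump with exactly $\lceil\log_2(d+1)\rceil$ hidden layers. If $v=0$ one simply takes $\Phi\equiv 0$, so assume $v\neq 0$. Since $\Omega^h\cup\Gamma^h$ is finite and $z_0\notin\Omega^h\cup\Gamma^h$, I would first fix $\rho>0$ with $\overline{B(z_0,\rho)}\cap(\Omega^h\cup\Gamma^h)=\emptyset$ and choose a nondegenerate $d$-simplex $S\subset B(z_0,\rho)$ containing $z_0$ in its interior. Writing $S=\{z\in\R^d:\ell_k(z)\ge 0,\ k=1,\dots,d+1\}$ with affine functions $\ell_k$ normalized so that $\ell_k(z_0)>0$, set $g(z):=\max\{0,\min_{1\le k\le d+1}\ell_k(z)\}$. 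Then $g$ is positive on the interior of $S$ (in particular $g(z_0)>0$), and $g\equiv 0$ on the open set $\{z:\min_k\ell_k(z)<0\}$, which contains an open neighborhood of every collocation point because each such point lies outside $\overline{B(z_0,\rho)}\supset S$. The target network will be $\Phi(z):=\tfrac{1}{g(z_0)}\,g(z)\,v$.

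The heart of the argument is to express $g$ as a depth-$(\lceil\log_2(d+1)\rceil+1)$ ReLU network. Using that the ReLU $\sigma(t)=\max\{0,t\}$ is nondecreasing, I would distribute the outer rectifier inward, $\max\{0,\min_k\ell_k\}=\min_k\max\{0,\ell_k\}$, and compute this minimum by a balanced binary tree built from the two identities $\min(a,b)=\tfrac12(a+b)-\tfrac12\bigl(\sigma(a-b)+\sigma(b-a)\bigr)$ and $\sigma(\min(a,b))=\min(\sigma(a),\sigma(b))$. The key economy is that the first tree level is fused with the leaf rectifiers: for each pair $(\ell_{2j-1},\ell_{2j})$ the univariate ``hat'' $\sigma(\min(\ell_{2j-1},\ell_{2j}))$ is itself a single-hidden-layer function (a fixed linear combination of three affine rectifiers), and likewise $\sigma(\ell_{d+1})$ for an unpaired index. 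Thus one hidden layer produces the $\lceil(d+1)/2\rceil$ nonnegative quantities $\min(\sigma(\ell_{2j-1}),\sigma(\ell_{2j}))$, and the remaining $\lceil\log_2(d+1)\rceil-1$ tree levels each consume one further hidden layer, with every intermediate value nonnegative so that it can be carried forward through the rectifiers. The final scalar value $g$ is a linear combination of the last hidden layer, so the scaling by $v/g(z_0)$ and the expansion to $c$ outputs are absorbed into the linear output layer, yielding depth exactly $L_0:=\lceil\log_2(d+1)\rceil+1$.

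I expect the depth bookkeeping in this min-tree to be the main obstacle: naively computing $\min_k\ell_k$ and then applying the outer rectifier and the scaling would cost two extra layers, and the stated bound is met only by (i) distributing the outer ReLU all the way to the leaves and (ii) fusing the first pairwise minimum with the leaf rectifiers via the hat identity, which is precisely the logarithmic-depth representation of a minimum of affine maps. The remaining verifications are routine: because $\Phi$ vanishes on an open neighborhood of each $z_\mcF^i$ and each $z_\mcB^j$, it is locally constant (zero) there, so $D^\beta\Phi(z_\mcF^i)=0$ and $D^\beta\Phi(z_\mcB^j)=0$ for every multiindex $\beta$; and $\Phi(z_0)=v$ by the normalization. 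Finally, for any prescribed $L>L_0$ I would extend the depth by inserting identity layers, using $x=\sigma(x)-\sigma(-x)$ as in \cref{Sec:NN}, which leaves the realized function, and hence all the stated properties, unchanged.
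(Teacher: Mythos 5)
Your overall strategy coincides with the paper's: build a continuous piecewise affine bump supported in a small polyhedron around $z_0$, observe that it vanishes identically (hence with all its derivatives) on open neighborhoods of every collocation point, rescale by $v/g(z_0)$ in the linear output layer, and pad to depth $L$ with identity layers $t=\sigma(t)-\sigma(-t)$. All of that is fine. The genuine gap is in the depth bookkeeping, precisely at the step you yourself flag as the ``key economy''. The claim that $\sigma(\min(\ell_{2j-1},\ell_{2j}))=\max\{0,\min(\ell_{2j-1},\ell_{2j})\}$ is a single-hidden-layer function (a linear combination of three rectified affine maps) is false for $d\ge 2$: it holds only when $\nabla\ell_{2j-1}$ and $\nabla\ell_{2j}$ are anti-parallel, which never happens for two facets of a nondegenerate simplex, since any two facets share a ridge and therefore are not parallel. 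For non-parallel gradients, the nondifferentiability set of $\max\{0,\min(\ell_1,\ell_2)\}$ consists of the half-hyperplanes $\{\ell_1=0,\ \ell_2\ge 0\}$, $\{\ell_2=0,\ \ell_1\ge 0\}$ and $\{\ell_1=\ell_2\ge 0\}$, whereas any function of the form $\sum_i c_i\sigma(p_i)+(\text{affine})$ with $p_i$ affine is nondifferentiable only along full hyperplanes $\{p_i=0\}$, with a gradient jump that is constant along each such hyperplane; so no such representation exists. Hence the first tree level cannot be fused with the leaf rectifiers, and without the fusion your construction uses $1+\lceil\log_2(d+1)\rceil$ hidden layers, i.e.\ depth $\lceil\log_2(d+1)\rceil+2$, which misses the minimal admissible depth $L=\lceil\log_2(d+1)\rceil+1$ that the lemma asserts.

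The gap is repairable in two ways. The paper's route is not to build the network by hand at all: it constructs the same kind of polyhedral tent $\tilde\Psi$ and then invokes \cite[Theorem 2.1]{ArBaMiMu:16}, which guarantees that every continuous piecewise affine function on $\R^d$ is realizable by a ReLU network of depth $\lceil\log_2(d+1)\rceil+1$; the depth saving there comes from the Wang--Sun decomposition of a CPWL function into a signed sum of maxima of at most $d+1$ affine functions, each computed with $\lceil\log_2(d+1)\rceil$ hidden layers in parallel, rather than from a min-tree applied to $\max\{0,\min_k\ell_k\}$. Alternatively, you can keep an explicit construction by changing the bump: for $d\ge 2$ take $g(z)=\sigma\bigl(\rho'-\|z-z_0\|_1\bigr)$ with $\|w\|_1=\sum_{i}\bigl(\sigma(w_i)+\sigma(-w_i)\bigr)$, an explicit two-hidden-layer network of depth $3\le\lceil\log_2(d+1)\rceil+1$, and for $d=1$ use the univariate hat $\sigma(z-a)-2\sigma\bigl(z-\tfrac{a+b}{2}\bigr)+\sigma(z-b)$, which meets the bound at depth $2$. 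Either repair leaves the remainder of your argument intact.
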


\begin{proof}
Choose pairwise disjoint open sets $\mcU_i\subset\Omega$ with $z_\mcF^i \in \mcU_i$. 
For each boundary point $z_\mcB^j$, choose an open set $O_j\subset\R^d$ with $z_\mcB^j\in O_j$ and set 
$\mcV_j := O_j\cap\Omega$. Pick 
\[
z_0 \in \Omega \setminus \left(\bigcup_{i=1}^{N_\mcF} \overline{\mcU_i}\ \cup\ \bigcup_{j=1}^{N_\mcB} \overline{\mcV_j}\right).
\]
Construct a continuous and piecewise affine scalar function $\tilde \Psi:\R^d\to\R$ with $\tilde \Psi\equiv 0$ on $\big(\bigcup_{i=1}^{N_\mcF} \mcU_i\big)\cup\big(\bigcup_{j=1}^{N_\mcB} \mcV_j\big)$ and $\tilde \Psi(z_0)=1$ (e.g., a small polyhedral ``tent'' around $z_0$). Define the vector-valued piecewise affine function
$\Psi:\R^d\to\R^c$ by $\Psi(z)=\tilde \Psi(z)\,v$.

Since any piecewise affine function is representable by a ReLU-NN of depth $\lceil \log_2(d+1)  \rceil + 1$ (coordinatewise) \cite[Theorem 2.1]{ArBaMiMu:16}, $\Psi$ is realized by a ReLU-NN.
Because $\Psi\equiv 0$ on each $\mcU_i$ and $\mcV_j$, all classical derivatives vanish at $z_\mcF^i$, and all boundary traces vanish at $z_\mcB^j$. Also $\Psi(z_0)=v$. 
To obtain a depth-$L$ neural network with $L\geq \lceil \log_2(d+1)\rceil + 1$ we just insert layers that implement the identity mapping. This can be realized by $g(t):=\sigma(t)-\sigma(-t) = t$ for all $t\in\R$  and composing the final neural network as $\Phi = g \circ g \circ \cdots \circ g \circ \Psi$ by using $L-\nnew{(\lceil \log_2(d+1)\rceil + 1)}$ $g$'s. This proves the statement. 
\end{proof}


\bibliographystyle{abbrv}
\bibliography{nmh,PINN_Ref}
\end{document}